\numberwithin{equation}{section}
\newtheorem{theorem}{Theorem}[section]
\newtheorem{proposition}[theorem]{Proposition}
\newtheorem{corollary}[theorem]{Corollary}
\newtheorem{definition}[theorem]{Definition}
\newtheorem {problem}[theorem]{Inverse Problem}
\newcommand{\R}{\mathbb{R}}
\newcommand{\C}{\mathbb{C}}
\newcommand{\Z}{\mathbb{Z}}
\newcommand{\real}{\mathrm{Re}}
\newcommand{\Ei}{\mathrm{Ei}}
\newcommand{\supp}{\mathrm{supp}}
\newcommand{\ND}{\mathcal{R}}
\newcommand{\expikz}{e^{ikz}}
\newcommand{\expiconjkz}{e^{i\bar{k}\bar{z}}}
\DeclareMathOperator{\parphi}{\widetilde{\varphi}}
\newcommand{\Hmhalf}{\widetilde{H}^{-1/2}(\bndry)}
\newcommand{\parspace}{\widetilde{H}^{-1/2}_\Gamma(\partial\Omega)}
\DeclareMathOperator{\parI}{\mathcal{I}}
\DeclareMathOperator{\dbar}{\overline{\partial}}
\DeclareMathOperator{\dnu}{\partial_\nu}
\DeclareMathOperator{\bndry}{ {\partial\Omega} }
\DeclareMathOperator{\T}{\mathbf{t}}
\begin{document}
\title[Direct Inversion from partial-boundary data]{Direct inversion from \\ partial-boundary data in \\ electrical impedance tomography}
\author[A. Hauptmann, M. Santacesaria, and S. Siltanen]{}
\date{\today}

\maketitle

\centerline{\scshape Andreas Hauptmann$^{\rm a}$, Matteo Santacesaria$^{\rm b}$, and Samuli Siltanen$^{\rm a}$}
\medskip
{\footnotesize \noindent$^{\rm a}$Department of Mathematics and Statistics, University of Helsinki, Helsinki, Finland \\
\noindent$^{\rm b}$Department of Mathematics "Francesco Brioschi",Politecnico di Milano, Milano, Italy}
\begin{abstract}
\noindent  
In Electrical Impedance Tomography (EIT) one wants to image the conductivity distribution of a body from current and voltage measurements carried out on its boundary. In this paper we consider the underlying mathematical model, the inverse conductivity problem, in two dimensions and under the realistic assumption that only a part of the boundary is accessible to measurements. In this framework our data are modeled as a partial Neumann-to-Dirichlet map (ND map). We compare this data to the full-boundary ND map and prove that the error depends linearly on the size of the missing part of the boundary. The same linear dependence is further proved for the difference of the reconstructed conductivities -- from partial and full boundary data. The reconstruction is based on a truncated and linearized D-bar method.
Auxiliary results include an extrapolation method to estimate the full-boundary data from the measured one, an approximation of the complex geometrical optics solutions computed directly from the ND map as well as an approximate scattering transform for reconstructing the conductivity. Numerical verification of the convergence results and reconstructions are presented for simulated test cases.
\end{abstract}

\section{Introduction}
In electrical impedance tomography (EIT) a body is probed with an electrical current to obtain information about the inner conductivity distribution. In this application full-boundary measurements are not always possible. This is especially true in three-dimensional medical imaging; but even in two dimensions we may be unable to access parts of the boundary. For example, when monitoring an unconscious patient in an intensive care unit, we usually have only access to the front part of the patient's chest. These limitations motivate the study of the inverse conductivity problem with partial-boundary data.

In this {work} we extend the theory of direct reconstructions by the D-bar method from full-boundary data to partial-boundary measurements. {Furthermore, we are interested in the error that is introduced to the data and to the reconstructed conductivity by restricting the measurement to a part of the boundary.}\smallskip	

We consider a two-dimensional bounded domain $\Omega\subset\R^2$ to which a current $f$ is injected from part of the boundary $\Gamma\subset\bndry$. The problem of EIT can then be modelled by the conductivity equation with Neumann boundary conditions
\begin{equation}\label{eq:cond_intro}
\begin{array}{rl}
\nabla\cdot\sigma\nabla u \ = &0, \quad \mbox{ in }\Omega,\\
 \sigma \frac{\partial u}{\partial\nu}\  =&f, \quad  \mbox{ on }\Gamma\subset\bndry, \\
 \sigma \frac{\partial u}{\partial\nu}\  =&0, \quad  \mbox{ on }\Gamma^c=\bndry\backslash\Gamma.
\end{array}
\end{equation} 
For uniqueness we assume that the solutions $u$ satisfy $\int_{\bndry} u ds=0$ and due to conservation of charge $\int_{\Gamma} f ds=0$. We are interested in recovering the conductivity $\sigma$ from boundary measurements, i.e. the trace $\left. u\right|_{\bndry}$, under given current patterns $f$. This measurement is modelled by the Neumann-to-Dirichlet, or current-to-voltage map, that {associates} every possible current pattern with the corresponding voltage on the boundary. Given a current on the full boundary $\varphi\in \widetilde{H}^{-1/2}(\bndry)$ (the space of $H^{-1/2}$ functions with zero mean on $\partial\Omega$) then the Neumann-to-Dirichlet map (ND map) is given by the operator
\[
\ND_\sigma:\widetilde{H}^{-1/2}(\bndry) \to H^{1/2}(\bndry),  \hspace{0.25 cm} \left.\ND_\sigma\varphi=u\right|_{\bndry}.
\]
For the computational reconstruction of $\sigma$ one ideally wants to represent the ND map with respect to an orthonormal basis on the full boundary. But if the currents are only supported on a part of the boundary, this is not directly possible. For this reason we introduce a {partial ND map}. Let us first consider a linear and bounded operator $\parI$ from $\widetilde{H}^{-1/2}(\partial\Omega)$  to a subspace of functions supported only on $\Gamma$ 
\begin{equation}\label{eqn:Hgamma}
{H}^{-1/2}_\Gamma(\partial\Omega):=\{\varphi\in \widetilde{H}^{-1/2}(\bndry) : \supp( \varphi)=\Gamma \text{ and } \int_\Gamma \varphi = 0\}.
\end{equation}
Then the {partial ND map} is defined as the composition
\[
\widetilde{\ND}_\sigma:=\ND_\sigma\parI,
\]
with the mapping properties $\widetilde{\ND}_{\sigma} :\widetilde{H}^{-1/2}(\partial\Omega)\xrightarrow{\parI}\widetilde{H}^{-1/2}_\Gamma(\partial\Omega) \xrightarrow{\ND_\sigma} H^{1/2}(\bndry)$. By this formulation we are able to interpret the measurements and we can represent the ND map with respect to an orthonormal basis, as discussed in Section \ref{sec:partialData}. 

In our main result, Proposition \ref{prop:convergence_single}, we analyse the error of measured traces from the partial ND map compared to the full ND map. 
In particular we choose the basis functions to be $\varphi_n(\theta)=\frac{1}{\sqrt{2\pi}} e^{in\theta}$, and $h=|\Gamma^c|$ be sufficiently small. Then for some constant $C = C (n)>0$, with possible dependence on $n$, the following error estimate holds
\begin{equation}\label{eqn:dataError_Intro}
\|(\widetilde{\ND}_\sigma-\ND_\sigma)\varphi_n\|_{L^2(\partial\Omega)}\leq C h.
\end{equation}
In the case that $C$ is independent of $n$ we prove, in Theorem \ref{theo:ReconError}, linear dependence of the reconstruction error when using the {partial ND map}. That is, given a truncation radius $R>0$ in the scattering transform, the {(truncated) reconstructions} $\sigma_R$ from full-boundary data and $\widetilde{\sigma}_R$ from partial-boundary data, then we have for some $C = C(R)>0$ that
\begin{equation}\label{eqn:reconError_Intro}
\|\widetilde{\sigma}_R-\sigma_R\|_{L^2(\Omega)} \leq Ch.
\end{equation}

We discuss shortly the difference of full-boundary and partial-boundary data as well as the underlying physical behaviour. In the full-boundary case, i.e. $\Gamma=\bndry$, the Dirichlet-to-Neumann data are equivalent to the Neumann-to-Dirichlet data. This is not true any more for partial-boundary data, where the graphs of the two operators represent different subsets of the Cauchy data. This can be further emphasized by the fact that the partial-boundary Dirichlet problem is nonphysical in many applications. That is, given a noninsulating body (e.g. a human), applied voltages on a part of the boundary will immediately distribute to the full boundary. Thus, partially supported Dirichlet data do not represent a common physical problem. On the other hand, if one injects current only on a subset $\Gamma\subset\bndry$, the current will stay zero on $\bndry\backslash\Gamma$. We stress that even in this setting the resulting voltage distribution will be supported on the full-boundary.
This is an essential problem for the measurements: we need in our representation the measurement information on $\bndry$. This limitation is overcome by an extrapolation {procedure} of the measured data, as we will discuss in Section \ref{sec:extrapolation}.

{Theoretical results for the inverse conductivity problem with partial boundary data mainly concentrate} on the uniqueness question. That means, does it follow from infinite-precision data that the conductivities are equal? Uniqueness has been proved in several cases for the Dirichlet-to-Neumann problem, including the important works \cite{Bukhgeim2002,DosSantosFerreira2007,Imanuvilov2010,Isakov2007,Kenig2007}. A thorough survey of these results can be found in \cite{Kenig2014}. For the more physical Neumann-to-Dirichlet problem there are just a few uniqueness results published. In particular for $C^2$ conductivities and coinciding measurement and input domains in $\R^2$ by \cite{Imanuvilov2012}, in higher dimensions in \cite{Harrach2016}, and for different input and measurement domains in \cite{Chung2015}. A more pratical case with bisweep data has been addressed in \cite{Hyvoenen2012}.
We would like to note that these results are of great importance for the theoretical understanding, but are so far not readily applicable for the computational reconstruction task. Furthermore, given only (very limited) finite data uniqueness can not be guaranteed any more, as demonstrated for the point electrode model in \cite{Chesnel2015}. A constructive uniqueness proof has been published by Nachman and Street \cite{Nachman2010} for dimension $n\geq3$. The proof is based on the Dirichlet-to-Neumann problem as well and hence does not apply to the problem at hand. A stability estimate was established in \cite{Heck2006}, showing log-log-type stability of the partial-data problem, in contrast to log-type stability for the full-boundary case \cite{Alessandrini1988}.

Reconstruction algorithms can be roughly divided into two classes: direct and indirect methods. Algorithms based on direct inversion are closely related to theoretical studies and demand a deep {understanding of} the mathematical structure of the problem. An investigation on direct inversion from partial-boundary data has been done in \cite{Hamilton2014a} based on the D-bar method by utilizing localized basis functions (Haar wavelets) to recover the complex geometric optics (CGO) solutions. Another direct approach is complex spherical probing with localized boundary measurements \cite{Ide2010,Ide2007}.

On the other side, indirect approaches for the partial-boundary problem are more common and perform very well in reconstruction quality, but tend to be slow. Typically those approaches consist in minimizing a carefully chosen penalty functional, which is based on a thorough understanding of physical aspects of the imaged target. In this category there are many algorithms available. We mention a few that are of importance in our perception. Those include reconstruction algorithms based on sparsity priors for simulated continuum data \cite{Garde2015} and planar real measurements \cite{Gehre2014}. Algorithms based on the complete electrode model \cite{Cheng1989,Somersalo1992}, which takes contact impedances at the electrodes into account, include domain truncation approaches \cite{Calvetti2015,Calvetti2015a,Liu2015a}, difference imaging \cite{Liu2015}, and electrode configurations that cover only a certain part of the boundary \cite{Mueller1999,Vauhkonen1999a}. In particular the complete electrode model is a partial-boundary problem, for which Hyv\"onen  \cite{Hyvoenen2009} proved linear dependence of the data error on the maximal electrode distance, similar to our estimate \eqref{eqn:dataError_Intro}.

\begin{figure}[t!]
\centering
\begin{picture}(300,105)
\put(-100,-25){\includegraphics[width=180 pt]{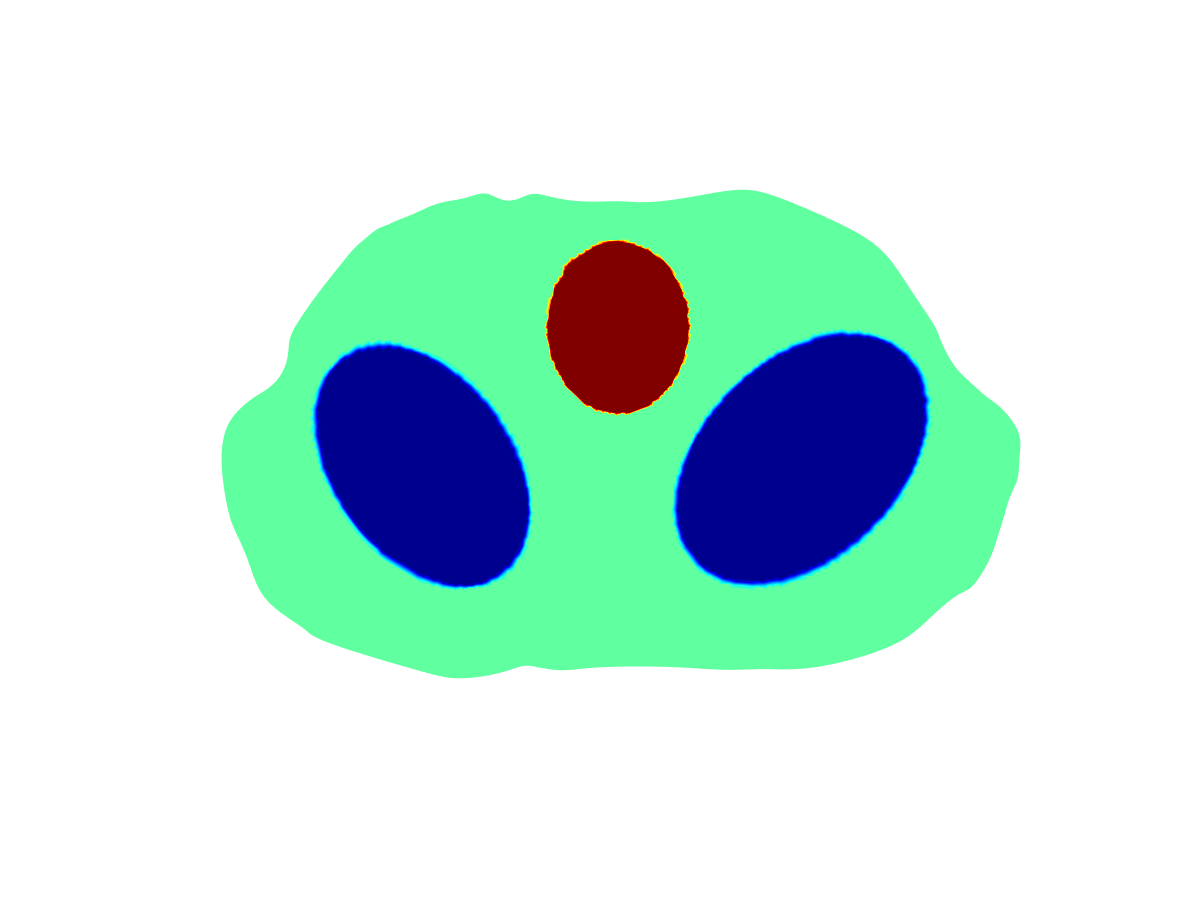}}
\put(60,-25){\includegraphics[width=180 pt]{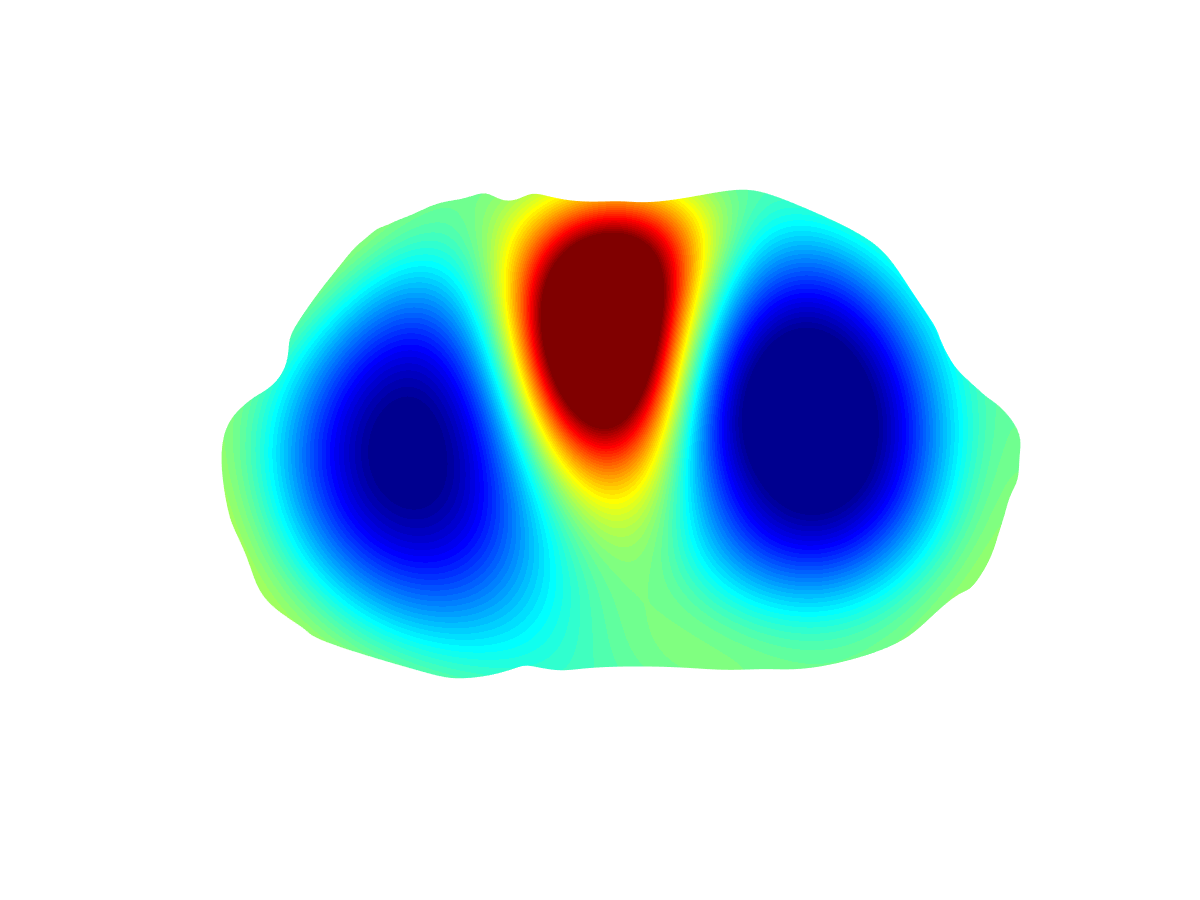}}
\put(220,-25){\includegraphics[width=180 pt]{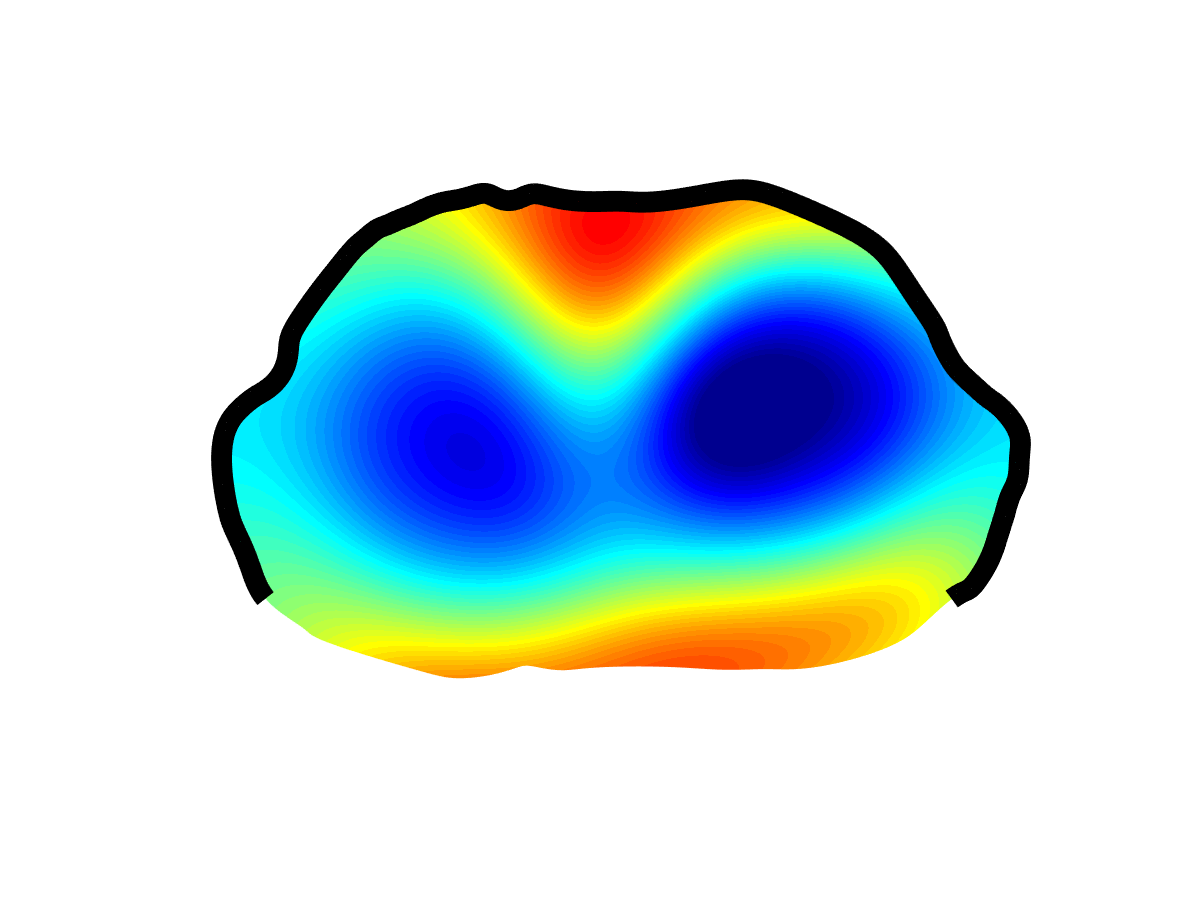}}
\put(-35,95){\Large Phantom}
\put(110,101){\Large Full boundary}
\put(110,87){\Large reconstruction}
\put(260,101){\Large 75\% of boundary}
\put(267,87){\Large reconstruction}
\end{picture}
\caption{\label{fig:HammerHead} Reconstructions of a human chest phantom from simulated data. In the middle reconstruction from full-boundary data and on the right the reconstruction from measurements on 75\% of the boundary. The measurement domain is indicated by the black line. Both reconstructions are plotted with the same colorscale. The quality of the reconstruction from partial-boundary data seems to be sufficient for detecting the collapse of a lung, for example.}
\end{figure}

In this paper we use as reference the D-bar algorithm by Knudsen et al. \cite{Knudsen2009} based on results of Novikov \cite{Novikov1988} and Nachman \cite{Nachman1996}, see also \cite{Knudsen2007,Siltanen2000}. This approach is heavily dependent on the Dirichlet-to-Neumann map (DN map). As we discussed this is not a problem for full-boundary measurements (essentially equivalent to the ND map), but it is for the partial-boundary case. Therefore, we need some adjustments when having only the {partial ND map} available. In Section \ref{sec:partialData} we carefully define the {partial ND map} and derive a representation by boundary layer potentials. In Section \ref{sec:projectionOp} we define the operator $\parI{: \widetilde{H}^{-1/2}(\partial\Omega) \to \widetilde{H}_{\Gamma}^{-1/2}(\partial\Omega)}$ and then we establish the error estimate \eqref{eqn:dataError_Intro} of the measured traces from the {partial ND map} to the full ND map. 

In Section \ref{sec:DbarND} we derive equations for the CGO solutions and the scattering transform in case we have only the ND map available. 
In perspective of the error estimate, {we treat the partial ND map as a noisy perturbation of the full ND map}. 
The resulting integrals are evaluated by applying a Born approximation as in \cite{Siltanen2000}. For practical truncation radii this approach is known to differ only minimally from the full nonlinear one. As a theoretical conclusion we prove the linear dependence of the reconstruction error \eqref{eqn:reconError_Intro} in Theorem \ref{theo:ReconError}.

In the computational Section \ref{sec:computationalResults} we demonstrate that the error estimates hold numerically and present reconstructions for a simple circular inclusion and a Heart-and-Lungs phantom on the unit disk. We also present a more realistic chest phantom on a non-circular domain, see Figure \ref{fig:HammerHead}. For better readability, a short discussion is directly presented with the computations. The results are then followed by our conclusions in Section \ref{sec:conclusion}.

\textit{Notation.} Throughout the paper, $C(\alpha,\beta,\ldots)$ is a positive constant depending on parameters $\alpha, \beta, \ldots$.

\section{Partial-boundary measurements and data extrapolation} \label{sec:partialData}
In this chapter we introduce our setting of {the inverse problem for electrical impedance tomography with continuum data supported on a part of the boundary}. We will derive a formulation of the {partial ND map} that allows us to represent the measured data and analyse the error we are doing in comparison to full-boundary data. 

\begin{figure}[ht!]
\centering
\begin{picture}(200,125)
\put(0,-20){\includegraphics[width=200 pt]{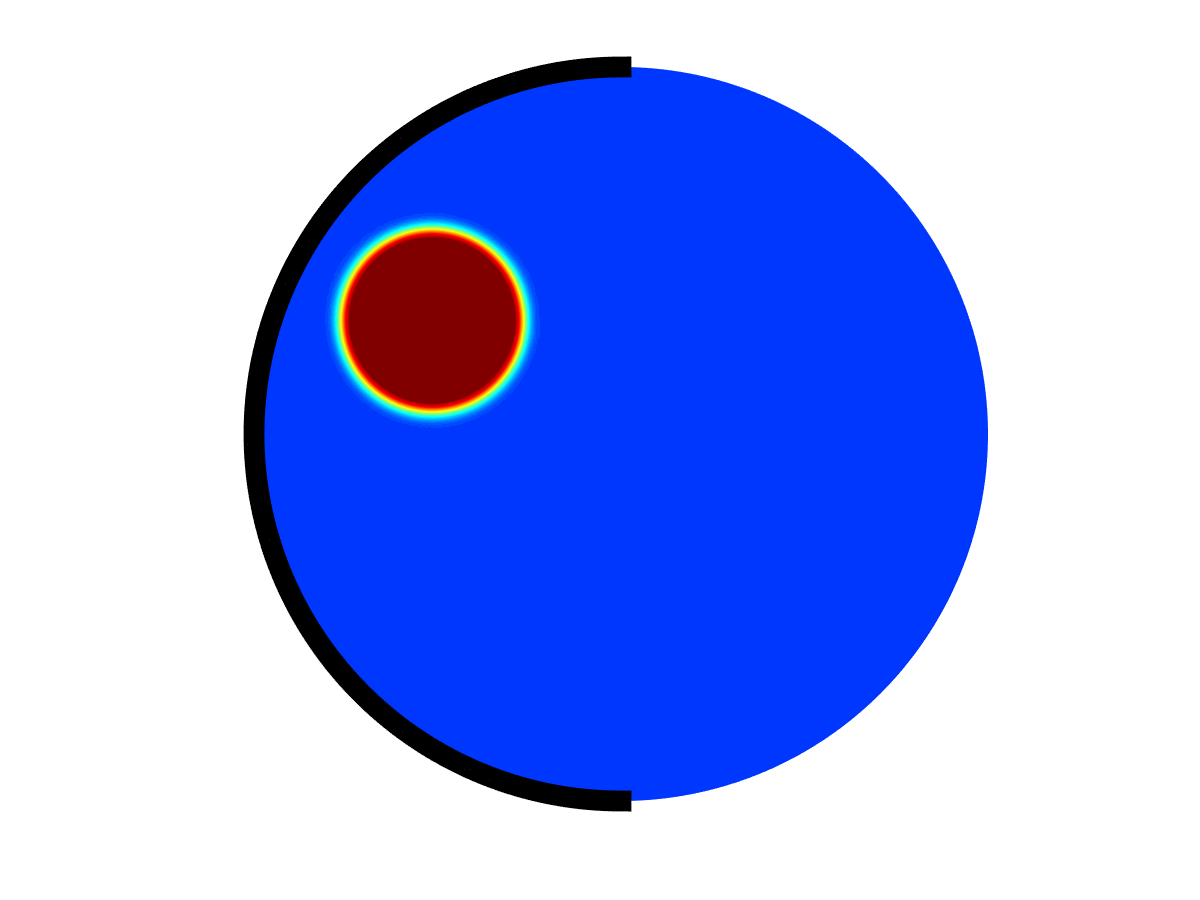}}
\put(30,75){$\Gamma$}
\end{picture}
\caption{\label{fig:GammaCircularCond}Illustration of the subset $\Gamma\subset\bndry$ in black with $\Omega$ the unit disk and a simple conductivity with circular inclusion.}
\end{figure}

Given our domain of interest $\Omega\subset\R^2$, we inject a current $f$ with zero mean on part of the boundary $\Gamma\subset\bndry$. The setting of EIT can be modelled by the conductivity equation with Neumann boundary condition \eqref{eq:cond_intro}.
For the Neumann data we introduced in \eqref{eqn:Hgamma} the space $\widetilde{H}^{-1/2}_\Gamma(\partial\Omega)\subset \widetilde{H}^{-1/2}(\bndry)$ of functions only supported on $\Gamma$. 
Let $\parphi\in\parspace$ with $\parphi|_\Gamma=f$. Then we can rewrite the boundary condition in \eqref{eq:cond_intro} as
\begin{equation}
\label{eq:PartCalderon}
\sigma \frac{\partial u}{\partial\nu}  = \widetilde{\varphi}, \quad  \mbox{ on } \bndry.
\end{equation}
From this boundary condition we can model the measurement process, given a partial current pattern $\parphi$, the ND map is defined as
\[
\ND_\sigma \parphi = u|_{\bndry}.
\]
The resulting voltages are supported on the whole boundary $\bndry$ and represent the actual measurement. For now we assume that we can measure on the full-boundary $\bndry$. This assumption is obviously not practical and hence we discuss in Section \ref{sec:extrapolation} how to estimate the full-boundary measurements from restricted data. 

To get further understanding of the boundary value $\parphi$, we introduced a linear and bounded operator $\parI:\Hmhalf\to\parspace$ and defined the {partial ND map} by $
\widetilde{\ND}_\sigma:=\ND_\sigma\parI. 
$
The operator $\parI$ will be specified in Section \ref{sec:projectionOp}. Let the partial-boundary function $\parphi$ be produced by $\parphi=\parI\varphi$ for some $\varphi\in\Hmhalf$. Then we immediately obtain the identity
\begin{equation}
\label{eq:centralIdent}
\ND_\sigma\parphi={\ND}_\sigma\parI\varphi=\widetilde{\ND}_\sigma \varphi.
\end{equation}
Now we can properly define the main question of this study.
\begin{problem}
By injecting partial current patterns $\parI\varphi=\parphi\in \widetilde{H}^{-1/2}_\Gamma(\partial\Omega)$ as Neumann boundary data for 
\begin{equation*}
\begin{array}{rl}
\nabla\cdot\sigma\nabla u = & 0, \quad \mbox{ in }\Omega,\\
\sigma \frac{\partial u}{\dnu}  =& \widetilde{\varphi}, \quad  \mbox{ on } \bndry, \\
\end{array}
\end{equation*}
we can model the measurements as 
\[
\ND_\sigma\parphi=\widetilde{\ND}_\sigma \varphi = u|_{\bndry}.
\]
We want to know from the knowledge of the partial Neumann-to-Dirichlet map
\[
\widetilde{\ND}_\sigma:\widetilde{H}^{-1/2}(\partial\Omega)\to\widetilde{H}^{1/2}(\partial\Omega),
\]
how well we can recover $\sigma$.
\end{problem}

The key quality of this formulation is that we can represent our measurements {via} an orthonormal basis of $L^2(\bndry)$. For instance, let $\Omega$ be the unit disk. We choose the orthonormal basis given by the Fourier basis functions $\varphi_n(\theta)=\frac{1}{\sqrt{2\pi}}e^{in\theta}$ for $n\in\Z\backslash \{0\}$. Now we can obtain a matrix approximation $\widetilde{\mathbf{R}}_\sigma$ of the {partial ND map} from the measurements $\ND_\sigma\parphi_n=u_n|_{\bndry}$ with respect to the orthonormal basis as
\[
(\widetilde{\mathbf{R}}_\sigma)_{n,\ell}=(\widetilde{\ND}_\sigma \varphi_n,\varphi_\ell)=(\ND_\sigma\parphi_n,\varphi_\ell)=\frac{1}{\sqrt{2\pi}}\int_{\bndry} u_n|_{\bndry}(\theta)e^{-i\ell\theta}d\theta.
\]

\subsection{Representing the ND map by boundary layer potentials}\label{sec:repNDmaps}
In this section we {derive} a representation of the {partial ND map} based on boundary layer potentials. This way we are able to analyse the error we are doing compared to full-boundary data in a general setting. 

Let $G_\sigma(x,y)$ be the Green's functions of the conductivity equation with Neumann boundary conditions, that is
\begin{align} \label{def:GN1}
-\nabla \cdot \sigma \nabla G_\sigma(x,y)&=\delta(x-y), \qquad \text{for } x,y\in \Omega,
\\ \label{def:GN2}
\sigma \dnu G(x,y)&=1/|\partial \Omega|, \qquad \text{ for } y \in \partial \Omega, x \in \Omega.
\end{align}
We have the following integral representation formula for a solution $u$ of the conductivity equation (see \cite[Theorem 7.7]{mclean2000}):
\begin{equation*}\label{eq:repForm}
u(x)= \int_{\bndry} \sigma(y) \dnu u(y) G_\sigma(x,y)ds_y, \hspace{0.25 cm} \forall x\in\Omega,
\end{equation*}
with the condition $\int_{\partial \Omega} u = 0$.
Taking the limit $x\to\bndry$, we obtain the identity
\begin{equation*}\label{eq:BIE_cond}
u(x) = (S_\sigma\dnu u)(x), \hspace{0.25 cm} \forall x\in\bndry,
\end{equation*}
where $S_\sigma:H^{-1/2}(\bndry)\to H^{1/2}(\bndry)$ is the single layer operator  given by 
\[
S_\sigma\varphi(x) = \int_{\bndry} \sigma(y)G_\sigma(x,y)\varphi(y) ds_y.
\]
Thus, in this representation, the Neumann-to-Dirichlet map coincides with the single layer operator restricted to the space $\widetilde{H}^{-1/2}(\partial \Omega)$: 
\begin{equation*}\label{eq:ND_map}
\ND_\sigma=S_\sigma:\widetilde{H}^{-1/2}(\bndry)\to \widetilde{H}^{1/2}(\bndry).
\end{equation*}
Furthermore, this representation is used to define the {partial ND map}, by using the identity \eqref{eq:centralIdent}
\begin{equation}\label{eq:partialND_map}
\widetilde{\ND}_\sigma\varphi={\ND_\sigma}\parI\varphi=S_\sigma(\parI\varphi) = u|_{\partial\Omega},
\end{equation}
and the difference of ND maps can then be simply expressed by
\begin{equation}\label{eq:NDerrorBasis}
(\widetilde{\ND}_\sigma-\ND_\sigma)\varphi=\ND_\sigma(\parphi-\varphi)=S_\sigma(\parphi-\varphi).
\end{equation}

\subsection{Measurement extrapolation}\label{sec:extrapolation}
We have so far assumed that we can measure the data on the full boundary, which is of course not a reasonable assumption. In a realistic setting we can measure only on the same area {where} we inject the currents, due to restrictions in accessibility. {More precisely,} given a partial current pattern $\parphi\in\parspace$, the measurement $u|_{\bndry} = {\ND}_\sigma\parphi$ is only known on $\Gamma$. We denote the actual measurement by
$$\widetilde{u}:=u|_\Gamma.$$
We {propose an extrapolation procedure} to estimate $u|_{\bndry}$ from $\widetilde{u}$. In principle it is possible to do the extrapolation on the single measured boundary trace $\widetilde{u}$, but the task is a lot simpler if one uses difference data, see Figure \ref{fig:measTraces} for an illustration.  
The D-bar method we base this study on needs difference data to a constant conductivity, i.e. the difference of ND maps $\ND_{\sigma,1}:=\ND_\sigma-\ND_1$.
 
The difference map is a smoothing operator. This can be seen by representing the ND maps by layer potentials as in Section \ref{sec:repNDmaps}. The principle part of the difference is just the difference of single layer operators. Assuming that the conductivity coincides with the constant background close to the boundary, this difference is indeed a smoothing operator.\footnote{Thanks to Petri Ola for his insight!} We note that this can be generalized to any smooth background and is not limited to constants as reference data. Thus, for the extrapolation task any difference data is sufficient. 

Now we are left with the extrapolation on the difference measurement
\[
\widetilde{g}:=\left.(\ND_{\sigma,1}\parphi)\right|_\Gamma.
\]
The extrapolation is best adjusted to the problem at hand, but for simple conductivities, such as a circular inclusion close to the boundary as used in Figure \ref{fig:measTraces}, we propose to use cubic extension of the traces. For this we parametrize the complement of the measurement area $\Gamma^c=\bndry\backslash\Gamma$ by an open interval $\Gamma^c=(-a,a)$, with $a=h/2$. Then we know the boundary values at $\widetilde{g}(\pm a)$, and we can numerically calculate the derivatives $\widetilde{g}'(\pm a)$. This way we can define a unique cubic polynomial that extends the measurement $\widetilde{g}$ to $\Gamma^c$ {and} denote the estimated data by $g$. The question left is, how well does the extended data $g$ approximate the difference data $\ND_{\sigma,1}\parphi$? 

Note that we are essentially using spline interpolation on two interpolation points, for which the error bound is known to be $O(h^4)$, see for instance \cite{Deuflhard2012}. We will see in the next section that the convergence of partial ND maps is of lower order and hence the convergence rate is conserved under this choice.

\begin{figure}[t!]
\centering
\begin{picture}(400,250)
\put(-30,130){\includegraphics[width=150 pt]{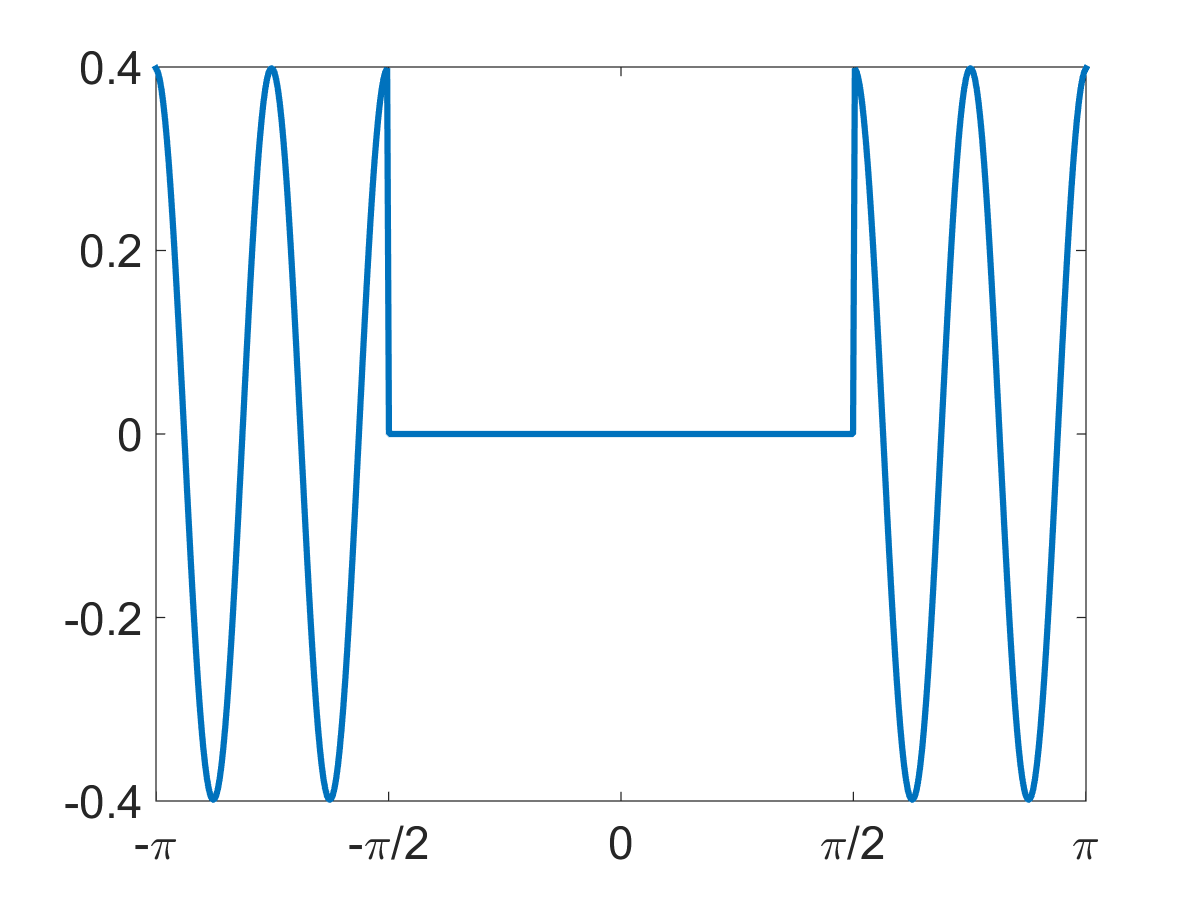}}
\put(110,130){\includegraphics[width=150 pt]{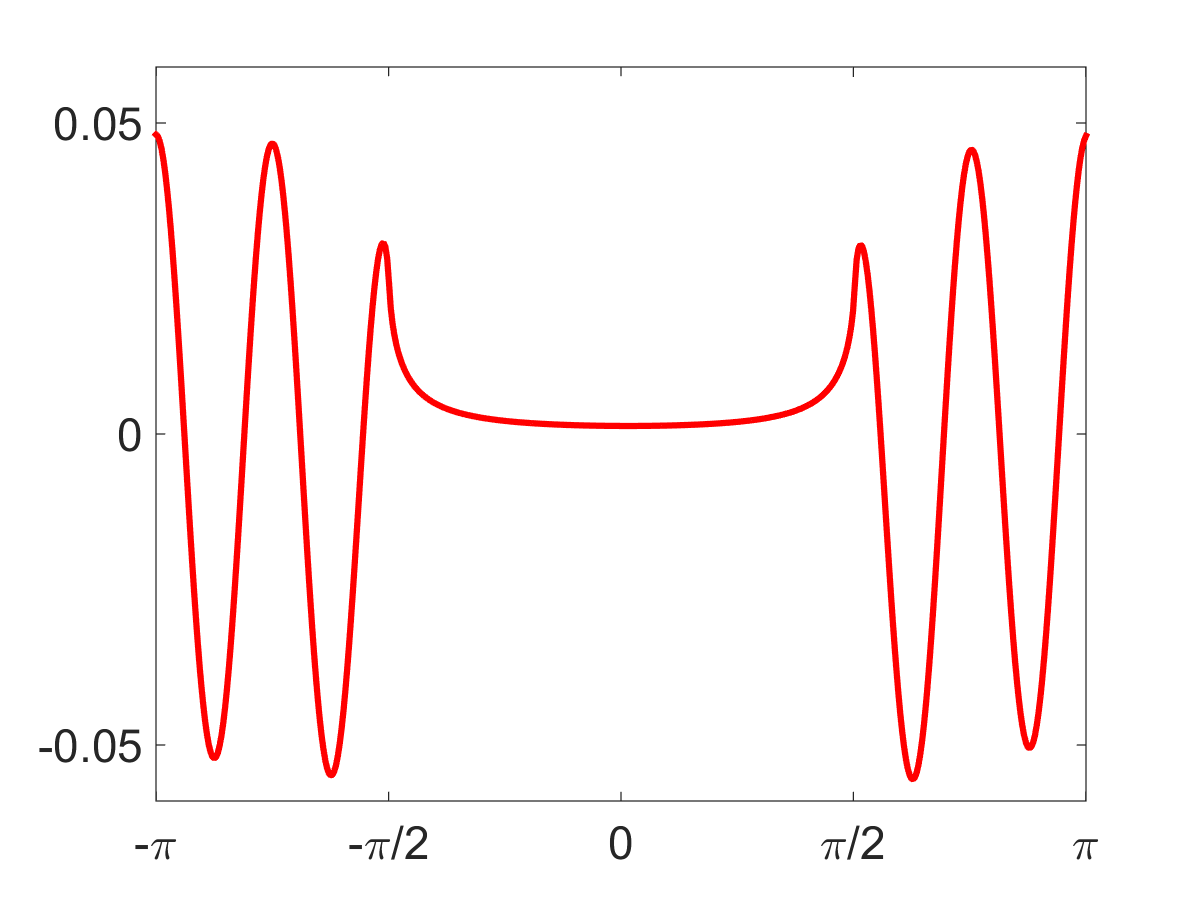}}
\put(250,130){\includegraphics[width=150 pt]{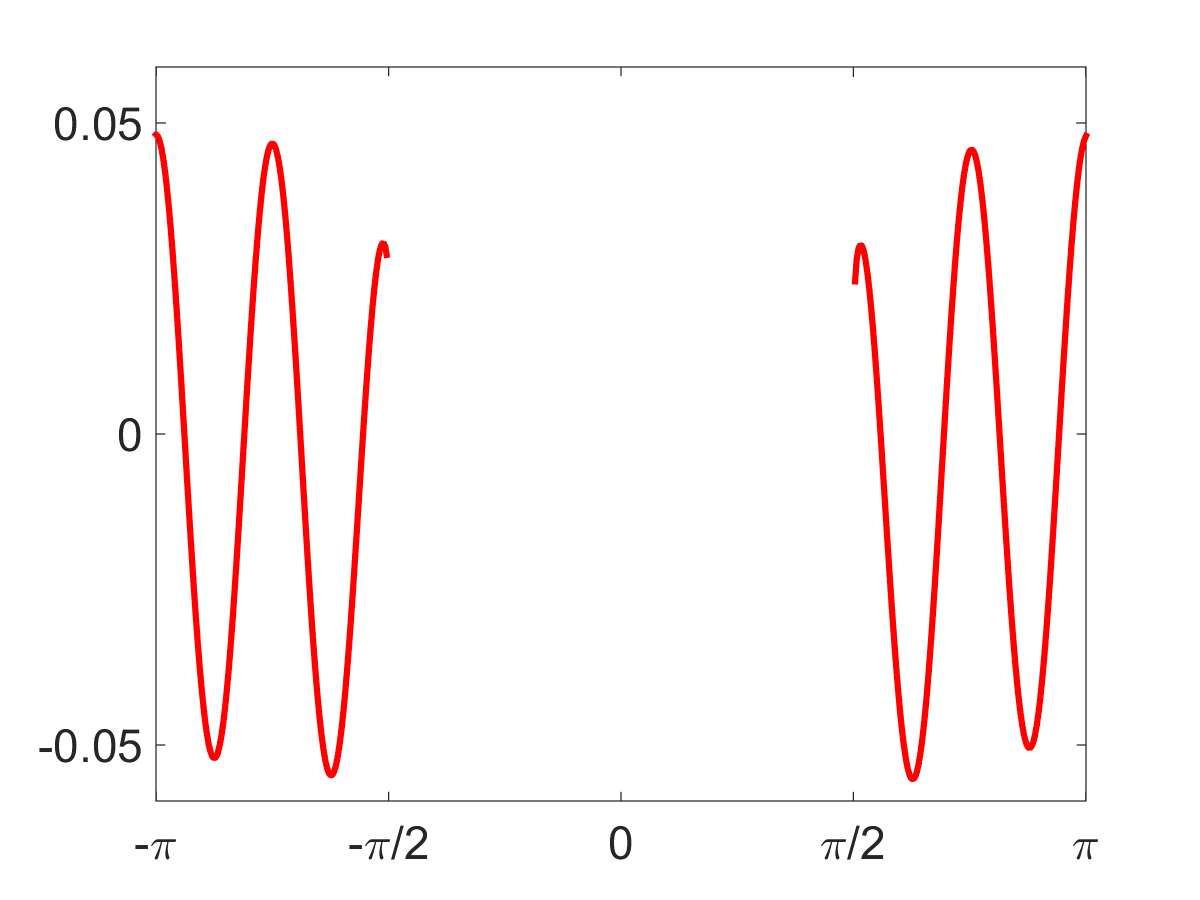}}

\put(-30,0){\includegraphics[width=150 pt]{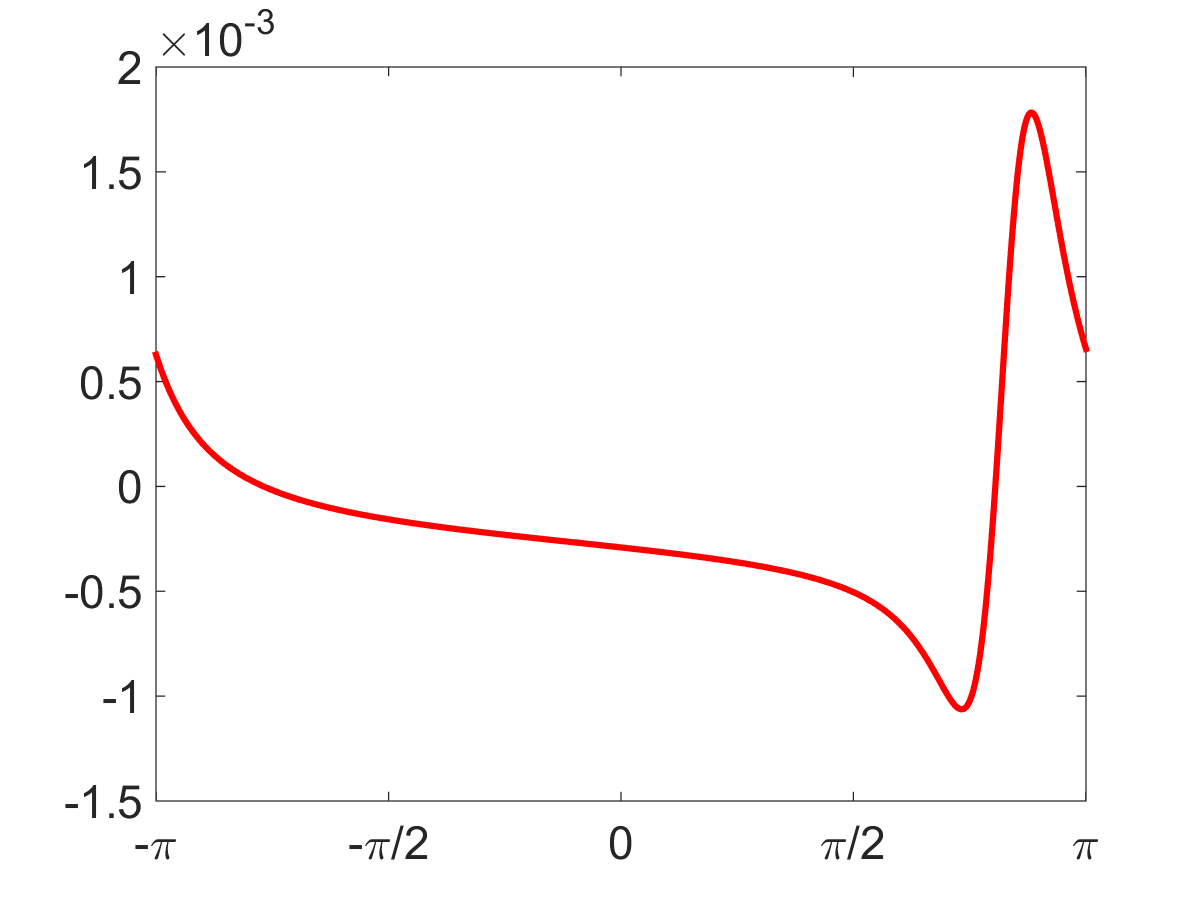}}
\put(110,0){\includegraphics[width=150 pt]{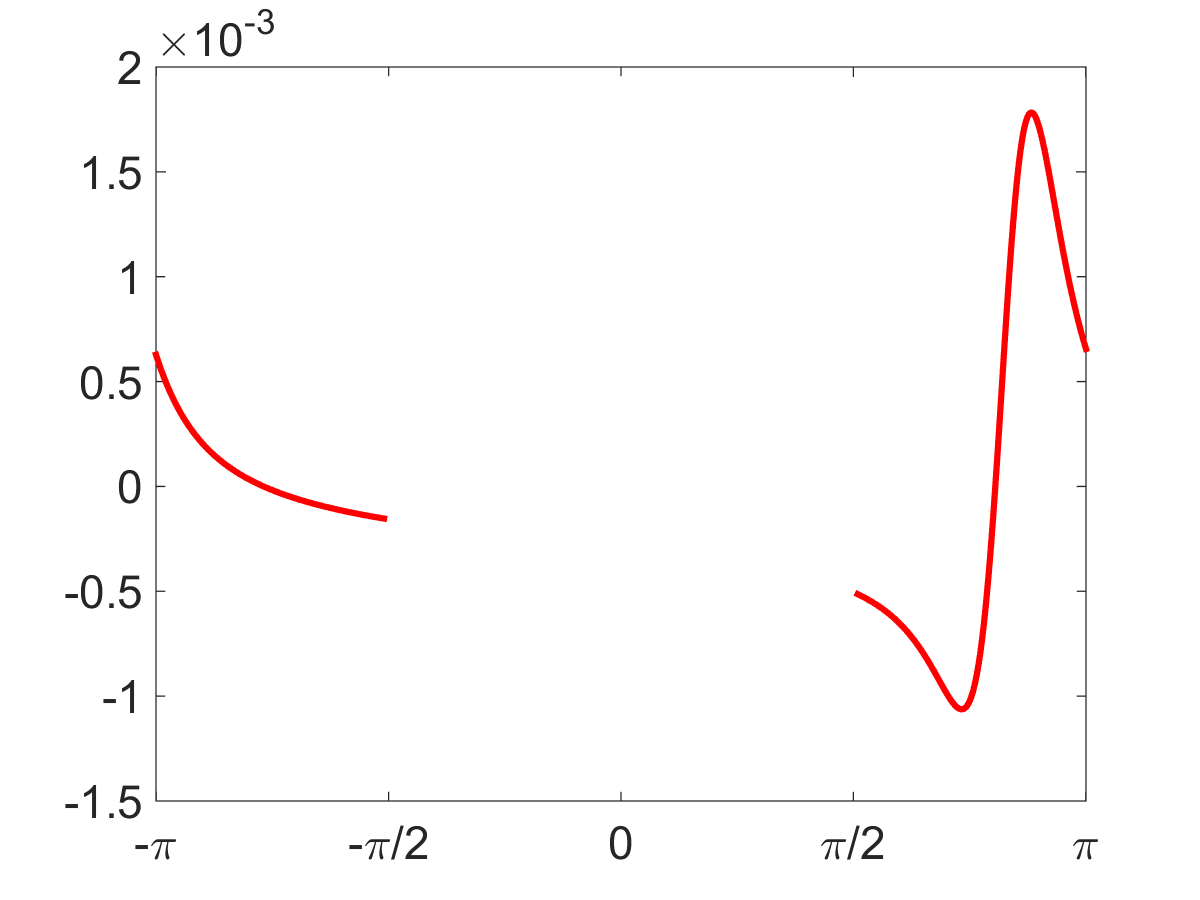}}
\put(250,0){\includegraphics[width=150 pt]{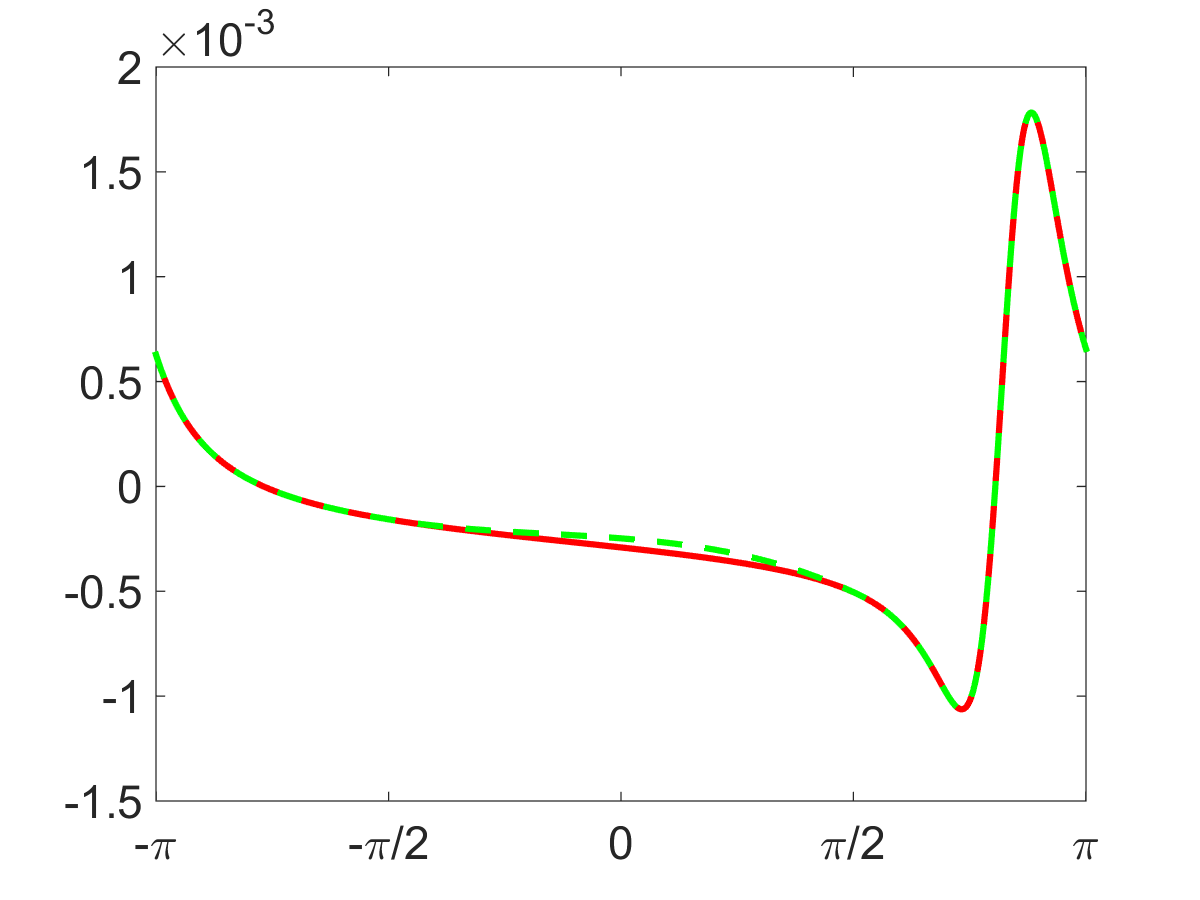}}
\put(30,245){Input $\parphi$}
\put(155,245){Full data $u|_{\bndry}$}
\put(290,245){Partial data $u|_\Gamma$}

\put(-5,115){Difference data $\ND_{\sigma,1}\parphi$}
\put(140,115){Measured difference $\widetilde{g}$}
\put(300,115){Estimated $g$}

\end{picture}
\caption{\label{fig:measTraces}Illustration of input and measurement data from the setting in Figure \ref{fig:GammaCircularCond}. The first row shows the input data (Left) and the full-boundary trace (Middle) next to what we can actually measure on $\Gamma$ (Right). The lower row shows that extrapolation on difference data is indeed an easier task. The ideal data (Left) next to the measurement restricted to $\Gamma$ (Middle). (Right) The estimated trace in green compared to the ideal data in red.}
\end{figure} 

\section{The partial ND map} \label{sec:projectionOp}
As we have seen, we can write the ND map in terms of boundary layer potentials. {In this section we will use this approach to carry out an error analysis}. 
We start by establishing the notation of the operator $\parI$ that maps orthonormal current patterns from $\Hmhalf$ to the subspace $\parspace$. Further, we will give some basic properties of the {partial ND map}. Let us start with a definition.
\begin{definition}
Let $\parI$ be a linear and bounded operator from $\Hmhalf$ to the subspace $\parspace$, we will call $\parI$ the partial-boundary map. The {partial ND map} is then given as the composition $\widetilde{\ND}_\sigma:=\ND_\sigma\parI:\widetilde{H}^{-1/2}(\partial\Omega)\xrightarrow{\parI}\widetilde{H}^{-1/2}_\Gamma(\partial\Omega) \xrightarrow{\ND_\sigma} H^{1/2}(\bndry).$
\end{definition}
The ND map $\ND_\sigma$ is a linear, bounded, and self-adjoint operator, by standard theory of elliptic PDEs \cite{Gilbarg1977}. The {partial ND map} is easily seen to be linear and bounded as composition of two linear and bounded operator. From the dual pairing we have 
\[
\langle\widetilde{\ND}_\sigma\varphi,\phi \rangle=\left\langle\ND_\sigma\parphi,\phi \right\rangle=\left\langle\parphi,\ND_\sigma\phi \right\rangle
\]
by self-adjointness of $\ND_\sigma$. Since $\ND_\sigma\parI\neq\parI\ND_\sigma$, we get that $\widetilde{\ND}_\sigma$ is not self-adjoint on $\Hmhalf$. Assuming $\parI$ is a projection operator, we have $\parI\varphi=\varphi$ for $\varphi\in\parspace$, from which we can deduce that the {partial ND map} $\widetilde{\ND}_\sigma$ is self-adjoint on $\parspace$. Anyhow, we will not need this result in our further analysis.

Let us now introduce two simple choices for the partial-boundary map.
\begin{itemize}
\item[{\textbf{Scaling}}] The first map shifts and scales the functions $\varphi$ to the partial-boundary $\Gamma$. We give here the definition for the unit disk, where the boundary $\bndry$ can be parametrized by an angle $\theta\in[0,2\pi]$. Then we denote $\Gamma=[\theta_1,\theta_2]\subset[0,2\pi]$ and the partial-boundary map is given by
\[
\begin{split}
\parI^s\varphi(\theta)=
\left\{
\begin{array}{cl}
\varphi\left(\frac{\theta-\theta_1}{r}\right) &\text{ if } \theta\in\Gamma ,\\
0 &\mbox{ else },
\end{array} \right.
\hspace{0.25cm} \text{with} \hspace{0.25cm} r=\frac{|\Gamma|}{|\bndry|}.
\end{split}
\]
\item[{\textbf{Cut-off}}] The second option is a cut-off with mean correction, that is
\[
\parI^c\varphi(\theta)=\left\{
\begin{array}{cl}
\varphi(\theta)-\frac{1}{|\Gamma|}\int_\Gamma\varphi(\tau)d\tau &\text{ if } \theta\in\Gamma,\\
0  &\mbox{ else }.
\end{array}\right.
\]
The mean is subtracted to make sure that $\parI^c\varphi\in\parspace$. Note that this choice is clearly a projection.
\end{itemize}

\subsection{Error analysis for the chosen maps}\label{sec:err_analy}
With the previous definitions we can prove the main result of this study, the convergence for the {partial ND map}. The goal is to prove an error estimate of the {partial ND map} to the full-boundary ND map. The estimate depends on the length of the missing boundary $h=|\Gamma^c|$ and hence we can establish a convergence result to full-boundary data.
In the proof we restrict ourselves to the unit disk and the exponential Fourier basis functions. The methodology is straightforward and can be readily generalized to other basis functions and more {general} domains. For the two choices $\parI^s$ and $\parI^c$ of the partial-boundary map we obtain a linear convergence rate. 

\begin{proposition}[Error of the {partial ND map}]\label{prop:convergence_single} Let $\Omega \subset \R^2$ be the unit disk,\ $\sigma \in L^{\infty}(\Omega)$ be a conductivity with $0 < \sigma_0 \leq \sigma(x)$ and $\sigma \equiv 1$ close to $\partial \Omega$.
Denote the {partial ND maps} as $\widetilde{\ND}_{\sigma}^c=\ND_\sigma\parI^c$ and $\widetilde{\ND}_{\sigma}^s=\ND_\sigma\parI^s$. Let the basis functions be $\varphi_n(\theta)=\frac{1}{\sqrt{2\pi}} e^{in\theta}$ for $n \neq 0$, and $\Gamma = \{ e^{i \theta} \in \partial \Omega : \theta \in [h/2,2\pi-h/2] \}$. Then there is a constant $C>0$ independent on $n$ such that:
\begin{align}\label{eq:convRes_cut}
&\|(\widetilde{\ND}_\sigma^c-\ND_\sigma)\varphi_n\|_{L^2(\partial\Omega)}\leq C h, \qquad \text{for } 0 < h \leq \pi,\\
&\|(\widetilde{\ND}_\sigma^s-\ND_\sigma)\varphi_n\|_{L^2(\partial\Omega)}\leq Cn^2 h, \qquad \text{for } 0 < h  < \frac{2\pi}{n+1}.
\end{align}
For the cut-off case, this is equivalent to the operator estimate
\begin{equation} \label{eq:convRes_cutOP}
\|\widetilde{\ND}_\sigma^c-\ND_\sigma\|_{L^2(\partial\Omega)\to L^2(\partial\Omega)}\leq C h.
\end{equation} 
\end{proposition}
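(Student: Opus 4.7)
The plan is to reduce, via identity \eqref{eq:NDerrorBasis}, to estimating $\|S_\sigma\psi_n\|_{L^2(\partial\Omega)}$, where $\psi_n:=(\parI - I)\varphi_n$ and $\parI$ is either $\parI^c$ or $\parI^s$. For the cut-off choice a direct computation gives $\psi_n^c(\theta) = -\varphi_n(\theta)\chi_{\Gamma^c}(\theta) - c_n\chi_\Gamma(\theta)$ with $c_n = \tfrac{1}{|\Gamma|}\int_\Gamma\varphi_n = -\tfrac{2\sin(nh/2)}{n(2\pi-h)\sqrt{2\pi}}$, from which $|c_n|\le C\min(h, 1/|n|)$ and, crucially, $\|\psi_n^c\|_{L^1(\partial\Omega)}\le Ch$ with $C$ independent of $n$. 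For the scaling choice $\psi_n^s(\theta) = \tfrac{1}{\sqrt{2\pi}}\bigl(e^{in(\theta-h/2)/r}\chi_\Gamma(\theta) - e^{in\theta}\bigr)$ with $r = (2\pi-h)/(2\pi)$, and the frequency mismatch $n/r - n \sim nh$ is the source of the extra factor $n^2$ in \eqref{eq:convRes_cut}.

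Next I would decompose $S_\sigma = S_1 + (S_\sigma - S_1)$, with $S_1 = \ND_1$ the ND map for the constant background. Because $\sigma\equiv 1$ in a neighborhood of $\partial\Omega$, the Green's functions $G_\sigma$ and $G_1$ differ by a $C^\infty$ function when both arguments lie near $\partial\Omega$ (this is the ``smoothing of the difference map'' already invoked in Section \ref{sec:extrapolation}). Hence the kernel of $S_\sigma - S_1$ is smooth on $\partial\Omega\times\partial\Omega$ and one obtains $\|(S_\sigma - S_1)\psi_n\|_{L^2}\le C\|\psi_n\|_{L^1}$ with $C=C(\sigma)$, which already gives the desired bound $Ch$ for the cut-off case and $Cnh$ for the scaling case from the first step.

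It remains to handle $\|S_1\psi_n\|_{L^2}$. On the unit disk $S_1$ is diagonal in the Fourier basis with $S_1\varphi_k = \tfrac{1}{|k|}\varphi_k$ for $k\neq 0$, so Parseval gives
\[
\|S_1\psi_n\|_{L^2}^2 = \sum_{k\neq 0}\frac{|\hat\psi_n(k)|^2}{k^2}.
\]
For the cut-off map an explicit computation yields, for $k\neq 0, n$,
\[
\hat\psi_n^c(k) = -\frac{\sin((n-k)h/2)}{\pi(n-k)} + \frac{2c_n\sin(kh/2)}{k\sqrt{2\pi}},
\]
plus a separate boundary case for $k=n$. Using $|\sin(a)|\le\min(|a|,1)$ on each sinc factor, the sum is split according to whether $|k-n|$ and $|k|$ are smaller or larger than $2/h$; in each of the four resulting regions a direct computation gives a contribution of order $h^2$ with constant independent of $n$, proving \eqref{eq:convRes_cut}. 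For the scaling case the same scheme works, but the frequency offset $n(1/r - 1)$ enters every sinc estimate and produces the stated $n^2 h$ rate after taking the square root.

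The main obstacle is the bookkeeping in the Fourier sum for the cut-off case: one must ensure that the two terms in $\hat\psi_n^c(k)$ do not conspire to lose uniformity in $n$, which is why the precise shape of $c_n$ (rather than a crude bound $|c_n|\le C$) is essential, and why one must exploit the decay $\min(h/2,1/|k-n|)$ rather than a single-regime estimate. The operator estimate \eqref{eq:convRes_cutOP} is then obtained by repeating the same decomposition with a general $f\in L^2(\partial\Omega)$ in place of $\varphi_n$: one writes $(\parI^c - I)f = -\chi_{\Gamma^c}f - \bigl(\tfrac{1}{|\Gamma|}\int_\Gamma f\bigr)\chi_\Gamma$ and carries out the kernel and Fourier estimates of the previous steps with $f$ playing the role of $\varphi_n$, using that both $S_\sigma - S_1$ (via its smooth kernel) and $S_1$ (via its Fourier multiplier structure) interact favorably with the two-term structure of $(\parI^c - I)f$.
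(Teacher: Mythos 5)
Your argument for the two per-mode estimates in \eqref{eq:convRes_cut} is correct and has the same architecture as the paper's proof: both reduce via \eqref{eq:NDerrorBasis} to bounding $S_\sigma(\parI-1)\varphi_n$, replace $S_\sigma$ by a constant-coefficient operator, and finish by computing the Fourier coefficients of $(\parI-1)\varphi_n$ explicitly --- for the cut-off map the key point in both versions is that $|c_n|\le Ch$ makes the mean-correction term harmless and the remaining coefficients are bounded by $Ch$ uniformly in $k$ and $n$. Where you genuinely diverge is the reduction from $S_\sigma$ to $S_1$: the paper uses the pointwise Green's function comparison $K^{-1}G_1\le G_\sigma\le K G_1$ of Littman--Stampacchia--Weinberger together with Nachman's mapping property $S_1:H^{-1}(\partial\Omega)\to L^2(\partial\Omega)$, while you split $S_\sigma=S_1+(S_\sigma-S_1)$, absorb the difference into an operator with bounded kernel (controlled by $\|\psi_n\|_{L^1}$), and diagonalize $S_1$ on the circle. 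Your route avoids applying a pointwise kernel comparison to a sign-changing density, which is the one delicate point in the paper's version, at the price of having to justify that the kernel of $S_\sigma-S_1$ is bounded on $\partial\Omega\times\partial\Omega$ (standard for $\sigma\equiv1$ near $\partial\Omega$, and only informally invoked by the paper in Section \ref{sec:extrapolation}). Your four-region splitting of the Fourier sum is unnecessary: the uniform bound $|\widehat{\psi_n^c}(k)|\le Ch$ already gives $\sum_{k\neq 0}|\widehat{\psi_n^c}(k)|^2/k^2\le Ch^2$.

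The gap is in your last paragraph, on the operator estimate \eqref{eq:convRes_cutOP}. Repeating the decomposition with a general $f\in L^2(\partial\Omega)$ does not yield the rate $h$: one only gets $\|\chi_{\Gamma^c}f\|_{L^1(\partial\Omega)}\le h^{1/2}\|f\|_{L^2(\partial\Omega)}$ and hence $\|\chi_{\Gamma^c}f\|_{H^{-1}(\partial\Omega)}\le Ch^{1/2}\|f\|_{L^2(\partial\Omega)}$, and this loss is real. Indeed, for $f_h=h^{-1/2}\chi_{\Gamma^c}-\tfrac{h^{1/2}}{2\pi}$ one has $\parI^c f_h=0$, $\|f_h\|_{L^2(\partial\Omega)}\approx 1$, and
\begin{equation*}
\|(\widetilde{\ND}_1^c-\ND_1)f_h\|_{L^2(\partial\Omega)}^2=\|\ND_1 f_h\|_{L^2(\partial\Omega)}^2\approx\sum_{k\neq 0}\frac{|\widehat{f_h}(k)|^2}{k^2}\approx h,
\end{equation*}
so the $L^2\to L^2$ norm is of order $h^{1/2}$ rather than $h$: a uniform-in-$n$ bound on the basis vectors is not equivalent to an operator-norm bound. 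To be fair, the paper asserts this equivalence without argument, so you are not missing a step the paper supplies; but your proposed method for that part does not close. What the single-mode estimates do give is $\|(\widetilde{\ND}_1^c-\ND_1)\sum_n a_n\varphi_n\|_{L^2(\partial\Omega)}\le Ch\sum_n|a_n|$, which is enough for the later applications (e.g.\ Proposition \ref{prop:scattering}, where the inputs $\dnu e^{ikz}$ have rapidly decaying Fourier coefficients), and this is the form in which the estimate should be recorded.
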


\begin{proof} 
Starting from \eqref{eq:NDerrorBasis}, we have
\[
\|(\widetilde{\ND}_\sigma-\ND_\sigma)\varphi_n\|_{L^2(\partial\Omega)} = \|S_\sigma(\parI-1)\varphi_n\|_{L^2(\partial\Omega)}.
\]
We recall that the single layer operator $S_\sigma$ involves the Green's function $G_\sigma(x,y)$ defined by properties \eqref{def:GN1}, \eqref{def:GN2}. Using the classical result \cite[Theorem 7.1]{Littman1963}, as well as the a priori bounds on $\sigma$, there exists a constant $K>0$ such that 
\[
K^{-1}G_1(x-y)\leq G_\sigma(x,y)\leq K G_1(x-y), \quad \text{for } x,y \in \Omega,
\]
where $G_1$ is the following fundamental solution of the Laplace equation:
\[
G_1(x)=-\frac{1}{2\pi}\log |x|.
\]
Let $S_1$ be the classical single layer operator
\[
S_1 \varphi(x) = \int_{\bndry} G_1(x-y)\varphi(y) ds_y.
\]
Then we have 
\[
\|S_\sigma(\parI-1)\varphi_n\|_{L^2(\partial\Omega)} \leq C \|S_1 (\parI-1)\varphi_n\|_{L^2(\partial\Omega)}.
\]
We can now use \cite[Lemma 7.1]{Nachman1996} that states boundedness of $S_1:H^s(\bndry)\to H^{s+1}(\bndry)$ for $-1\leq s\leq 0$, which yields for $s=-1$ that
\[
\|S_1 (\parI-1)\varphi_n\|_{L^2(\partial\Omega)} \leq C \| (\parI-1)\varphi_n\|_{H^{-1}(\partial\Omega)},
\]
where $\| \varphi \|^2_{H^{-1}(\partial \Omega)} = \sum_{k \in \Z} (1+k^2)^{-1} |\widehat{\varphi}(k)|^2$ and 
\[
\widehat{\varphi}(k) = \frac{1}{2 \pi}\int_0^{2 \pi}\varphi(\theta)e^{-ik\theta}d\theta.
\]

We are now left with explicitly calculating the Fourier coefficients for the two choices of the partial-boundary map. For the cut-off case, i.e. $\parI = \parI^c$ we have
\begin{equation}\label{def:cutoffdiff}
(\parI^c -1)\varphi_n(\theta)= -C_{\Gamma}\chi_{\Gamma} - \chi_{\Gamma^c}\varphi_n(\theta),
\end{equation}
where $\chi_{\Gamma}$ (resp. $\chi_{\Gamma^c}$) is the characteristic function of $\Gamma$ (resp. $\Gamma^c$) and
$$C_{\Gamma} = \frac{1}{|\Gamma|}\int_{\Gamma}\frac{1}{\sqrt{2\pi}} e^{in\theta}d\theta = \frac{-2\sin(nh/2))}{(2\pi-h)n}.$$

Straightforward computations of the Fourier coefficients of \eqref{def:cutoffdiff} and the basic estimate $|\sin(x)| \leq |x|$ show that $|\widehat{(\parI^c -1)\varphi_n}(k)| \leq C h$, for $h \leq \pi$ and the constant is independent of $n$. This finishes the proof of the error estimate for the cut-off case.

%

For the scaling case, i.e. $\parI = \parI^s$, we have, by definition
\begin{equation}\label{def:scalingdiff}
(\parI^s -1)\varphi_n(\theta)= \chi_{\Gamma}\varphi_n\left(\frac{\theta-h/2}{2\pi-h}2\pi\right) -\varphi_n(\theta).
\end{equation}

The Fourier coefficients are
\begin{align*}
&\widehat{(\parI^s -1)\varphi_n}(k) = \frac{ie^{-\frac i 2 h k}(e^{ihk}-1)(2\pi-h)}{(\sqrt{2\pi})^{3}(2(k-n)\pi -hk)}, \quad \text{for } k \neq n, \\
&\widehat{(\parI^s -1)\varphi_n}(n) = \frac{2(2\pi-h)\sin(\frac{hn}{2})}{(\sqrt{2\pi})^{3}hn}-\frac{1}{\sqrt{2\pi}},
\end{align*}
which satisfy, for $h < \frac{2\pi}{n+1}$,
\begin{align*}
&|\widehat{(\parI^s -1)\varphi_n}(k)| \leq C h, \qquad \text{for } k \neq n,\\
&|\widehat{(\parI^s -1)\varphi_n}(n)| \leq C h n^2.
\end{align*}
This gives the error estimates for the scaling case.
\end{proof}

These estimates immediately extend to difference data.
\begin{corollary}\label{cor:DiffND}
Under the assumptions of Proposition \ref{prop:convergence_single} we have
\begin{align}\label{eq:convResDiff}
&\|(\widetilde{\ND}_{\sigma,1}^c-\ND_{\sigma,1})\varphi_n\|_{L^2(\partial\Omega)}\leq C h,\\
&\|(\widetilde{\ND}_{\sigma,1}^s-\ND_{\sigma,1})\varphi_n\|_{L^2(\partial\Omega)}\leq Cn^2 h,
\end{align}
where $\ND_{\sigma,1}:=\ND_\sigma-\ND_1$, $\widetilde \ND_{\sigma,1}:=\widetilde \ND_\sigma-\widetilde \ND_1$. 

{For the cut-off case this is equivalent to the operator estimate
\begin{equation} \label{eq:convResDiffOP}
\|\widetilde{\ND}_{\sigma,1}^c-\ND_{\sigma,1}\|_{L^2(\partial\Omega) \to L^2(\partial\Omega)}\leq C h.
\end{equation} }
\end{corollary}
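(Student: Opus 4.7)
The plan is to reduce the difference-data statement to Proposition \ref{prop:convergence_single} by simple linearity. Since $\widetilde{\ND}_{\sigma,1} = \widetilde{\ND}_\sigma - \widetilde{\ND}_1$ and $\ND_{\sigma,1} = \ND_\sigma - \ND_1$, I write
\[
\widetilde{\ND}_{\sigma,1} - \ND_{\sigma,1} \;=\; (\widetilde{\ND}_\sigma - \ND_\sigma) \;-\; (\widetilde{\ND}_1 - \ND_1),
\]
so by the triangle inequality
\[
\|(\widetilde{\ND}_{\sigma,1}^{\bullet}-\ND_{\sigma,1})\varphi_n\|_{L^2(\partial\Omega)} \;\leq\; \|(\widetilde{\ND}_\sigma^{\bullet}-\ND_\sigma)\varphi_n\|_{L^2(\partial\Omega)} + \|(\widetilde{\ND}_1^{\bullet}-\ND_1)\varphi_n\|_{L^2(\partial\Omega)},
\]
for either superscript $\bullet = c$ or $\bullet = s$.

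Next I observe that each of the two terms on the right-hand side is exactly of the form controlled by Proposition \ref{prop:convergence_single}. For the first term this is immediate, since $\sigma$ satisfies the hypotheses of the proposition by assumption. For the second term I apply the proposition to the constant conductivity $\sigma \equiv 1$, which trivially satisfies the same hypotheses (lower bound $\sigma_0 = 1$ and equal to $1$ near $\partial\Omega$). This yields the bound $C h$ in the cut-off case and $C n^2 h$ in the scaling case for each term, with constants independent of $n$ (in the cut-off case), so adding the two gives the claimed estimates with a new constant.

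For the operator estimate \eqref{eq:convResDiffOP}, I use the same splitting $\widetilde{\ND}_{\sigma,1}^c-\ND_{\sigma,1} = (\widetilde{\ND}_\sigma^c-\ND_\sigma) - (\widetilde{\ND}_1^c-\ND_1)$ and bound each operator norm separately by \eqref{eq:convRes_cutOP}, again applied once to $\sigma$ and once to the constant conductivity $1$. The triangle inequality in operator norm then delivers the result. There is no significant obstacle here: the argument is purely linearity plus triangle inequality, and the only thing to check is that the constant background $\sigma \equiv 1$ fits the assumptions of Proposition \ref{prop:convergence_single}, which is obvious.
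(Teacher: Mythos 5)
Your argument is correct and is exactly the one the paper intends: the paper gives no explicit proof, stating only that the estimates ``immediately extend to difference data,'' and the implicit reasoning is precisely your linearity-plus-triangle-inequality reduction, applying Proposition \ref{prop:convergence_single} once to $\sigma$ and once to the constant conductivity $1$ (which indeed satisfies the hypotheses). Nothing further is needed.
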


We note that the above estimates hold for more general partial-boundary maps $\parI$ that have at least linear convergence on $\bndry$ with respect to $h$: that is, for some $C>0$,
\[
\|(\parI-1)\varphi_n\|_{H^{-1}(\partial \Omega)}\leq C h.
\]
To conclude this chapter we recall that the extrapolation error of partial-boundary measurements is of higher order, as described in Section \ref{sec:extrapolation}. Thus, the data error is governed by restricting the input currents and not by the limitation of the measurement domain. 

\section{CGO solutions and reconstructing the conductivity}\label{sec:DbarND}
In this section we present a fast way to compute the CGO solutions or directly the scattering transform for the reconstruction. For this we use a Born approximation as introduced and studied in \cite{Knudsen2007,Siltanen2000}. It has been recently shown that using this approximation to obtain the scattering transform can lead with modern computing power to a real time D-bar {reconstruction} algorithm \cite{Dodd2014}. This is achieved by an initialization step and parallelization. In particular the scattering transform can be parallelized in the parameter $k\in\C\backslash\{0\}$ and solving the D-bar equation can be done independently for each point $z\in\Omega$. We will discuss in the following how to obtain the CGO solutions and scattering transform from the ND map. But first we need a short overview of the classical D-bar method for $C^2$-conductivities to establish the terminology.
\subsection{The classical D-bar method for EIT}
We give a very brief review of the standard D-bar method to compute direct reconstructions for EIT with full-boundary data. As the D-bar method we refer to \cite{Knudsen2009}{, which is based on Novikov's \cite{Novikov1988} and Nachman's \cite{Nachman1996} results}. We consider the conductivity equation with Dirichlet condition
\begin{equation}\label{eq:cond_eqD}
\begin{array}{rl}
\nabla\cdot\sigma\nabla u\ = & 0, \quad \mbox{ in }\Omega,\\
\left. u \right|_{\bndry}\ =& f, \quad  \mbox{ on }\bndry.
\end{array}
\end{equation} 
Let the conductivity $\sigma\in C^2(\Omega)$ be bounded by $0<\sigma_0\leq \sigma(x) \leq C$ for all $x\in\Omega$ and $\sigma\equiv 1$ close to $\bndry$. The measurement is modelled by the Dirichlet-to-Neumann map 
\begin{equation*}
\Lambda_\sigma:\;f\mapsto \left.\sigma\frac{\partial u}{\partial\nu}\right|_{\bndry}.
\end{equation*}
The basis of the algorithm is to transform the conductivity equation to a Schr\"odinger type equation by change of variables $v=\sqrt{\sigma}u$ to
\begin{equation*}
(-\Delta + q)v=0.
\end{equation*}
The potential $q=\frac{\Delta\sqrt{\sigma}}{\sqrt{\sigma}}$ is extended from $\Omega$ to $\C$ by setting $\sigma(z)\equiv 1$ for $z\in \C\backslash\Omega$. The main idea of the D-bar method is to look for special Complex Geometric Optics (CGO) solutions $\psi(z,k)$, as introduced in \cite{Faddeev1966,Sylvester1987}, that satisfy 
\begin{equation}
(-\Delta + q(\cdot))\psi(\cdot,k)=0,
\end{equation}
for an auxiliary variable $k\in\C$ with the asymptotic condition $e^{-ikz}\psi(z,k)-1 \in W^{1,p}(\R^2)$, with $2<p<\infty$.

These CGO solutions can be recovered for $k\in\C\backslash \{0\}$ by solving a Fredholm boundary integral equation of the second kind
\[
\psi(z,k)=e^{ikz}-\int_{\bndry} G_k(z-\zeta)\left[\Lambda_\sigma - \Lambda_1\right] \psi(\zeta,k)\; dS(\zeta),
\]
where $G_k$ is Faddeev's Green's function for the Laplacian \cite{Faddeev1966}, with asymptotics matching $\psi$.

Having the CGO solutions we can compute the scattering transform for $k\neq0$ by
\begin{equation}
\mathbf{t}(k)=\int_{\partial\Omega} \expiconjkz (\Lambda_\sigma - \Lambda_1)\psi(z,k) ds.
\end{equation}

We set $\mu(z,k)=\psi(z,k)\expikz$. With the scattering transform $\mathbf{t}(k)$ obtained, the reconstruction of $\sigma$ can then be recovered by solving the D-bar equation for $z\in\Omega$
\begin{equation}\label{eq:dbark_eq}
\dbar_k \mu(z,k) = \frac{1}{4\pi \bar{k}} \mathbf{t}(k)e_{-k}(z) \overline{\mu(z,k)},
\end{equation} 
with the modified exponential $e_k(z)=e^{i(kz+\bar{k}\bar{z})}$. {Here we used the complex derivative $\dbar_k = \frac{1}{2}\left(\frac{\partial}{\partial k_1}+i \frac{\partial}{\partial k_2}\right)$, for $k = k_1 + ik_2$. }The reconstructed conductivity is then given by 
\[
\lim_{k\to0}\mu(z,0)=\sqrt{\sigma(z)},
\]
where one can substitute $k=0$ for the limit as shown by the analysis in \cite{Barcel'o2001,Knudsen2004a}.

\subsection{Recovering the CGO solutions and scattering transform from the ND map}
The classical D-bar method uses the DN map, but in our setting we have only the {partial ND map} available. For full-boundary data we could invert the measurement matrix to obtain the DN matrix, but the partial-boundary measurement matrix is not invertible. That means we need to compute the CGO solutions and the scattering transform directly from the ND map. We restrict the analysis to the full-boundary ND map and treat the {partial ND map} in the following as noisy input data. 

In the case of DN maps the derivation is done with Alessandrini's identity \cite{Alessandrini1988}, which states a relation of two DN maps for different potentials.
It is straightforward to adjust this to a modified Alessandrini's identity for the Neumann problem: given two solutions $v_i\in H^1(\Omega)$ of 
\begin{equation}
\label{eq:Schroed}
\begin{array}{rl}
(-\Delta+q_i)v_i =& 0 \quad \text{ in }  \Omega, \\
\dnu v_i  =& \varphi_i \quad \text{on }\bndry,
\end{array}
\end{equation}
then the following identity holds,
\begin{equation}
\label{eq:Alesan}
\int_\Omega (q_1-q_2)v_1v_2 dz = -\int_{\partial\Omega} \varphi_1({\ND}_{q_1}-{\ND}_{q_2})\varphi_2 ds.
\end{equation}

With the identity \eqref{eq:Alesan} we can continue to derive the equation used to obtain the CGO solutions, summarized in the following result.
\begin{proposition}\label{prop:CGO}
Let $\sigma\in C^2(\Omega)$ be bounded by $0<\sigma_0\leq \sigma(x) \leq C$ for all $x\in\Omega$ and $\sigma\equiv 1$ close to $\bndry$. 
Given the Complex Geometric Optics solutions $\psi(z,k)$ for $k\in\C\backslash \{0\} $ and $z\in\bndry$, with $\Omega\subset\R^2$ the unit disk. The following equation holds 
\begin{equation}\label{eq:BndryInt_psi}
\psi(z,k)=e^{ikz} - (B_k-\frac{1}{2})(\ND_1 - \ND_\sigma)\dnu \psi(\cdot,k),
\end{equation}
{where $B_k$ is} the double layer operator
\[
B_k\phi(z)= p.v. \int_{\bndry}\left(\dnu G_k(z-\zeta)\right)\phi(\zeta) ds(\zeta)
\]
and $G_k$ is Faddeev's Green's function.
The normal derivative of $G_k(z-\zeta)$ for $z,\zeta \in \bndry$ with respect to $\zeta$ is given by
\begin{equation}\label{eq:normDeriv_G}
\dnu G_k(z-\zeta) = \frac{1}{2\pi}\text{Re}\left(\zeta \frac{e^{ik(z-\zeta)}}{z-\zeta}\right).
\end{equation}

\end{proposition}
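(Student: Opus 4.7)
The plan is to mimic Nachman's classical derivation for the Dirichlet case, adapted to Neumann data: apply Green's second identity with Faddeev's Green's function $G_k$, and then pass to the boundary via the jump relations of the double layer potential. The crucial maneuver is to express the boundary trace $\psi|_{\partial\Omega}$, which cannot be read off directly from Neumann measurements, as an ND-difference $(\ND_1-\ND_\sigma)\partial_\nu\psi$ by introducing an auxiliary harmonic function with the same Neumann trace as $\psi$.

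First I would observe that $\psi(\cdot,k)-e^{ikz}\in W^{1,p}(\R^2)$ solves $-\Delta(\psi-e^{ikz})=-q\psi$ distributionally with $q$ compactly supported in $\Omega$, so Faddeev's inversion formula gives $\psi(z,k)-e^{ikz}=-\int_\Omega G_k(z-\zeta)q(\zeta)\psi(\zeta,k)\,d\zeta$ for every $z\in\C$. Then I apply Green's second identity in $\Omega$ to the pair $G_k(z-\cdot)$ and $\psi(\cdot,k)$: using $\Delta\psi=q\psi$ in $\Omega$ and $\Delta_\zeta G_k(z-\zeta)=-\delta(z-\zeta)$, the volume contribution cancels against the integral just displayed, leaving the compact identity
\[
e^{ikz}=S_k(\partial_\nu\psi(\cdot,k))(z)-D_k(\psi(\cdot,k)|_{\partial\Omega})(z),\qquad z\in\Omega,
\]
where $S_k$ and $D_k$ are the single and double layer potentials with kernel $G_k$. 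Next I introduce the harmonic auxiliary $v\in H^1(\Omega)$ solving $\Delta v=0$ in $\Omega$ with $\partial_\nu v=\partial_\nu\psi$ on $\partial\Omega$, normalized so that $v|_{\partial\Omega}=\ND_1(\partial_\nu\psi)$. The same Green's identity applied to $v$ yields $v(z)=S_k(\partial_\nu\psi)(z)-D_k(v|_{\partial\Omega})(z)$. Because $\sigma\equiv 1$ near $\partial\Omega$, the change of variables $u=\sigma^{-1/2}\psi$ gives $\partial_\nu u=\partial_\nu\psi$ and $u|_{\partial\Omega}=\psi|_{\partial\Omega}$, i.e.\ $\ND_\sigma(\partial_\nu\psi)=\psi|_{\partial\Omega}$. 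Subtracting the two representations cancels the common single-layer term and gives, for $z\in\Omega$,
\[
v(z)-e^{ikz}=-D_k\bigl((\ND_1-\ND_\sigma)\partial_\nu\psi\bigr)(z).
\]

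The last step is to pass to the boundary using the standard jump relation $\lim_{z\to z_0}D_k\phi(z)=(B_k\pm\tfrac12)\phi(z_0)$ for the appropriate side of approach, combine it with the identity $v|_{\partial\Omega}=\psi|_{\partial\Omega}+(\ND_1-\ND_\sigma)\partial_\nu\psi$, and isolate $\psi(z,k)$ on $\partial\Omega$, thereby recovering the integral equation \eqref{eq:BndryInt_psi}. The explicit kernel \eqref{eq:normDeriv_G} is then a direct polar-coordinate computation: on $\partial B_1(0)$ the outer normal satisfies $\nu_\zeta=\zeta$, and differentiating the singular part $-\frac{1}{2\pi}e^{ik(z-\zeta)}\log|z-\zeta|$ of Faddeev's Green's function along $\nu_\zeta$ yields the stated real-part expression once the smoother analytic correction is accounted for. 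The principal obstacle I anticipate is the careful bookkeeping of signs and the $\pm\tfrac12$ jump constants through the trace limit, together with a clean justification of the identification $\ND_\sigma(\partial_\nu\psi)=\psi|_{\partial\Omega}$, which hinges critically on the assumption $\sigma\equiv 1$ in a neighborhood of $\partial\Omega$; regularity-wise $\psi\in H^2_{\mathrm{loc}}(\Omega)$ follows from $\sigma\in C^2$ and standard elliptic theory, so the boundary traces and Green's identities used above are meaningful.
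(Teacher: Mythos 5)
Your argument is correct and rests on the same ingredients as the paper's proof: Green's second identity with Faddeev's Green's function, the Lippmann--Schwinger equation to absorb the volume term, the jump relation of the double layer potential, and the reduction $\ND_q=\ND_\sigma$ coming from $\sigma\equiv 1$ near $\partial\Omega$. The packaging differs in one respect worth noting. The paper states a Neumann version of Alessandrini's identity, evaluates it with $v_1=G_k(z-\cdot)$ at points $z\notin\bar\Omega$ (where $G_k(z-\cdot)$ is a $q_1=0$ solution in $\Omega$ and the left-hand side is just $\psi(z)-e^{ikz}$), and then lets $z\to\partial\Omega$ from outside; you work at interior points and introduce the auxiliary harmonic function $v$ with $\partial_\nu v=\partial_\nu\psi$, subtracting its Green representation from that of $\psi$. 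The two routes are equivalent, and the $\pm\tfrac12$ bookkeeping you worry about is resolved exactly by your identity $v|_{\partial\Omega}-\psi|_{\partial\Omega}=(\ND_1-\ND_\sigma)\partial_\nu\psi$: the extra boundary term contributed by $v$ offsets the difference between the interior and exterior jump constants, so both derivations land on the same operator once the sign conventions for $G_k$ and the normal are fixed. Two smaller remarks: the kernel formula \eqref{eq:normDeriv_G} is exact, not ``singular part plus a correction'' --- the paper obtains it cleanly from $G_1(z)=-\frac{1}{2\pi}\mathrm{Re}(\Ei(iz))$ together with $\partial_z\Ei(z)=e^z/z$, which avoids having to track the remainder of the logarithmic singularity; and both your argument and the paper's apply $\ND_1-\ND_\sigma$ to $\partial_\nu\psi$, whose boundary mean $\int_\Omega q\psi$ need not vanish, so the ND maps must be understood with the normalization built into \eqref{def:GN2}.
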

\begin{proof}
The boundary integral equation $\psi$ can be obtained similar as in the DN case. Consider the modified Alessandrini's identity \eqref{eq:Alesan} and choose $q_2=q=\sqrt{\Delta \sigma}/\sqrt{\sigma}$, $v_2=\psi(\cdot,k)$, $q_1=0$, and $v_1=G_k(z-\zeta)$ with $z\notin\bar{\Omega}$,
we have
\begin{equation}
\label{eq:CGO_Omega}
-\int_\Omega G_k(z-\zeta)q(\zeta)\psi(\zeta,k)ds(\zeta) = \int_{\bndry} \left(\dnu G_k(z-\zeta)\right)(\ND_q-\ND_0)\dnu\psi(\zeta,k)ds(\zeta).
\end{equation}
From the Lippmann-Schwinger-type equation for the CGO solutions we have the identity
\begin{equation}\label{eq:LippSchw}
\psi = \expikz-G_k\ast(q\psi),
\end{equation}
which can be used to rewrite the left hand side of \eqref{eq:CGO_Omega}. Then taking the limit $z\to\bndry$ in the double layer potential (right hand of \eqref{eq:CGO_Omega}) introduces the usual jump term \cite{Nachman1988a,Steinbach2008} and we obtain
\begin{equation}
\label{eq:CGO_bndry}
\psi(z,k) = \expikz - (B_k-\frac{1}{2})(\ND_0 - \ND_q)\dnu \psi(\cdot,k).
\end{equation}
Next we want to express the above boundary integral equation by the ND map $\ND_\sigma$. For that let $v$ be the solution of \eqref{eq:Schroed} and consider the conductivity equation
\[
\nabla\cdot\sigma\nabla u =0 \ \text{ in } \ \Omega, \hspace{0.5cm} \dnu u = \phi.
\]
We have by assumption that $\left. \dnu \sigma\right|_{\bndry}=0$, then we obtain
\[
\ND_q \phi = v|_{\bndry}=\sigma^{1/2}u|_{\bndry}=\sigma^{1/2}\ND_\sigma(\sigma^{1/2}\phi).
\]
Further, the assumption that $\sigma\equiv 1$ near the boundary yields the identity
\[
\ND_q=\ND_\sigma
\]
and \eqref{eq:BndryInt_psi} follows.

We are left to show \eqref{eq:normDeriv_G} the derivation of the normal derivative of $G_k$, for this we use the explicit form of the integral defining $G_k$. We will concentrate on the case $k=1$, the general case follows by the identity
$G_k(z)=G_1(kz) $ and $G_1=e^{iz}g_1(z)$.
Further, we can represent $g_1(z)$ by the exponential-integral function $\Ei(z)$ as done in \cite{Boiti1987} by
\begin{equation}
g_1(z)=-\frac{1}{4\pi}e^{-iz}(\Ei(iz)+\Ei(-i\bar{z}))=-\frac{1}{2\pi}e^{-iz}\real(\Ei(iz)).
\end{equation}
Thus, 
\[
G_1(z)=e^{iz}g_1(z)=-\frac{1}{2\pi}\real(\Ei(iz)).
\]
The exponential-integral function $\Ei(z)$ can be easily evaluated by modern software such as MATLAB. However, the most important property for us is that the derivative is given by
\[
\partial_z \Ei(z)=\frac{e^z}{z}.
\]
For the argument $z-\zeta$, with $z,\zeta\in \bndry$ and $z-\zeta \neq 0$ the normal derivatives in direction of $\zeta$ are then given by
\[
\begin{split}
\dnu \Ei(i(z-\zeta))=-\zeta\frac{e^{i(z-\zeta)}}{z-\zeta}, \text{ and }
\dnu \Ei(-i(\bar{z}-\bar{\zeta}))=-\bar{\zeta}\frac{e^{-i(\bar{z}-\bar{\zeta})}}{\bar{z}-\bar{\zeta}}.
\end{split}
\]
Together we obtain
\begin{equation}\nonumber
\dnu G_1(z-\zeta)=-\frac{1}{4\pi}\left(-\zeta \frac{e^{i(z-\zeta)}}{z-\zeta} - \bar{\zeta}\frac{e^{-i(\bar{z}-\bar{\zeta})}}{\bar{z}-\bar{\zeta}} \right) =\frac{1}{2\pi}\real\left(\zeta \frac{e^{i(z-\zeta)}}{z-\zeta} \right).
\end{equation}
\end{proof}

The equation \eqref{eq:BndryInt_psi} obtained cannot be solved without further knowledge about the normal derivative $\dnu\psi$. We would need to state the boundary integral equation with $\dnu\psi$ on the left side, which has been done in \cite{Isaev2012}. For our purpose the formulation \eqref{eq:BndryInt_psi} is sufficient, since we compute it with the help of a Born approximation to compute the CGO solutions. But before we discuss this in the next section, let us obtain an equation for the scattering transform in a similar fashion. The scattering transform is defined as
\[
\T(k) = \int_{\Omega} q(\zeta) e^{i\bar{k}\bar{\zeta}}\psi(\zeta,k)ds(\zeta).
\]
By the modified Alessandrini's identity \eqref{eq:Alesan} and the choices $q_1=0$, $v_1=e^{i\bar{k}\bar{z}}$, $q_2=q=\sqrt{\Delta \sigma}/\sqrt{\sigma}$ and $v_2=\psi(\cdot,k)$ we obtain
\[
\T(k) = \int_{\bndry} \left( \dnu e^{i\bar{k}\bar{\zeta}} \right)(\ND_1-\ND_\sigma)\dnu\psi(\zeta,k)ds(\zeta). \\
\]
\subsection{Born approximation and reconstruction error}\label{sec:reconError}
We present the {integral formulas} to obtain the CGO solutions and the scattering transform separately by using a Born approximation as utilized in \cite{Knudsen2007,Siltanen2000}. We approximate the CGO solutions {with} their asymptotic behaviour, i.e.
\[
\psi\approx e^{ikz}.
\]
On the unit disk the normal derivative is simply given by
\[
\dnu e^{ikz}=ikz e^{ikz}.
\]
Now we can compute approximate CGO solutions from equation \eqref{eq:BndryInt_psi} by evaluating the following equation
\begin{equation}\label{eq:psiExp}
\psi^{\mathrm{ND}}(z,k):=e^{ikz} - (B_k-\frac{1}{2})(\ND_1 - \ND_\sigma)\dnu e^{ikz}.
\end{equation}
The CGO solutions can be of particular interest, since they contain non-linear information of the inverse conductivity problem and can be used as a data fidelity term as shown in \cite{Hamilton2014}.

\noindent
If one is instead just interested in computing the reconstruction, the approximate scattering transform can be directly obtained from
\begin{equation}\label{eq:scatExp}
\begin{split}
\T^{\mathrm{ND}}(k) &:= \int_{\bndry} \left(\dnu e^{i\bar{k}\bar{\zeta}}\right)(\ND_1-\ND_\sigma)\dnu e^{ik\zeta}ds(\zeta).
\end{split}
\end{equation}	
{This} can then be used to solve the D-bar equation \eqref{eq:dbark_eq} to get the reconstruction of $\sigma$. Computationally it is not feasible to compute the scattering transform for all $k\in\C\backslash\{0\}$. Therefore we define a cut-off radius $R>0$ and compute 
\[
\T^{\mathrm{ND}}_{R}(k)=
\left\lbrace\begin{array}{cl}
\T^{\mathrm{ND}}(k) &\text{ if } |k|< R ,\\
0 &\mbox{ else. }
\end{array} \right.
\]
We use $\T^{\mathrm{ND}}_{R}(k)$ to solve the D-bar equation \eqref{eq:dbark_eq} and denote the reconstructed conductivity by $\sigma_R$.

So far we used the full-boundary ND map $\ND_\sigma$. Interpreting the {partial ND map} $\widetilde{\ND}_\sigma$ as a noisy approximation to $\ND_\sigma$, we can use the results of Section \ref{sec:err_analy} to establish an error estimate for the reconstruction from partial-boundary data.
The essential step is to verify that the linear estimate holds for the scattering transform. 

{From now on we will restrict our analysis to the partial ND map $\widetilde \ND_{\sigma}^c$ constructed with the cut-off operator $\parI^c$, since we need the operator estimate \eqref{eq:convResDiffOP} in the proof.} Anyhow, for the computations with finite matrix approximations of the ND maps, the results can be extended for both partial-boundary maps.

\begin{proposition}\label{prop:scattering}
Let $\Omega\subset\R^2$ be the unit disk  and $\Gamma = \{ e^{i \theta} \in \partial \Omega : \theta \in [h/2,2\pi-h/2] \}$. Let $\sigma\in C^2(\Omega)$ be bounded by $0<c\leq \sigma(x) \leq C$ for all $x\in\Omega$ and $\sigma\equiv 1$ close to $\bndry$. For a fixed cut-off radius $0<R<\infty$, let $\T^{\exp}_R$ be computed from ${\ND}_{1,\sigma}$ and $\widetilde{\T}^{\exp}_R$ from {$\widetilde{\ND}_{1,\sigma}^c$ (cut-off case)}. Then for $0 < h \leq \pi$, there are constants $C, L>0$ such that
\begin{equation}
\left| \frac{\T^{\exp}_R(k)-\widetilde{\T}^{\exp}_R(k)}{\bar k}\right| \leq C h |k| e^{2L |k|}, \quad \text{for } |k| \leq R.
\end{equation}
\end{proposition}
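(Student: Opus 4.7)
The plan is to write the difference $\T^{\exp}_R(k) - \widetilde \T^{\exp}_R(k)$ as a single boundary integral in which $\ND_{\sigma,1} - \widetilde\ND^c_{\sigma,1}$ appears as a factor, apply Cauchy--Schwarz, and then invoke the operator-norm estimate already proved in Corollary \ref{cor:DiffND}. The reason the proposition is stated only for the cut-off partial ND map is precisely that the input $\dnu e^{ik\zeta}$ is not a single Fourier mode, so the mode-by-mode bound \eqref{eq:convResDiff} is not sufficient; what one genuinely needs is the operator estimate \eqref{eq:convResDiffOP}.

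First I would use the defining formula \eqref{eq:scatExp}. Since both $\T^{\exp}_R$ and $\widetilde\T^{\exp}_R$ vanish for $|k|>R$, it suffices to treat $|k|\leq R$, in which case
\[
\T^{\exp}_R(k) - \widetilde \T^{\exp}_R(k) \;=\; -\int_{\bndry}\bigl(\dnu e^{i\bar k\bar\zeta}\bigr)\,\bigl[\ND_{\sigma,1} - \widetilde\ND^c_{\sigma,1}\bigr]\bigl(\dnu e^{ik\zeta}\bigr)\,ds(\zeta),
\]
where I used $\widetilde\ND^c_{\sigma,1} = (\ND_\sigma-\ND_1)\parI^c$. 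By Cauchy--Schwarz on $L^2(\bndry)$ and \eqref{eq:convResDiffOP},
\[
\bigl|\T^{\exp}_R(k) - \widetilde \T^{\exp}_R(k)\bigr| \;\leq\; C h \,\|\dnu e^{i\bar k\bar\zeta}\|_{L^2(\bndry)}\,\|\dnu e^{ik\zeta}\|_{L^2(\bndry)}.
\]

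Next I would estimate the two $L^2$ norms explicitly. On the unit disk the outward normal at $\zeta = e^{i\theta}$ is $\nu = \zeta$, so $\dnu e^{ik\zeta} = ik\zeta e^{ik\zeta}$ and $\dnu e^{i\bar k\bar\zeta} = i\bar k\bar\zeta e^{i\bar k\bar\zeta}$. Writing $k=k_1+ik_2$, the elementary bound
\[
|e^{ik\zeta}| \;=\; e^{-k_2\cos\theta - k_1\sin\theta}\;\leq\; e^{|k|},\qquad |e^{i\bar k\bar\zeta}| \;\leq\; e^{|k|},
\]
gives $\|\dnu e^{ik\zeta}\|_{L^2(\bndry)},\ \|\dnu e^{i\bar k\bar\zeta}\|_{L^2(\bndry)} \leq \sqrt{2\pi}\,|k|\,e^{|k|}$. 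Inserting these into the previous display and dividing by $|\bar k| = |k|$,
\[
\left|\frac{\T^{\exp}_R(k) - \widetilde \T^{\exp}_R(k)}{\bar k}\right| \;\leq\; C h\,|k|\,e^{2|k|},
\]
which is the claimed bound with $L=1$.

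The argument is short because the real work has already been done in Corollary \ref{cor:DiffND}: the only nontrivial point is that the test function $\dnu e^{ik\zeta}$ is an arbitrary $L^2$ function on $\bndry$, so one has no choice but to use an operator-norm bound on $\ND_{\sigma,1}-\widetilde\ND^c_{\sigma,1}$. The cut-off radius $R$ plays no quantitative role in the estimate itself; it only guarantees that both scattering transforms are truncated consistently, while the exponential factor $e^{2|k|}$ accounts for the unbounded growth of the CGO-type exponentials in the spectral parameter $k$.
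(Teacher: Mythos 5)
Your proposal is correct and follows essentially the same route as the paper's proof: both express the difference of scattering transforms as a boundary integral, apply Cauchy--Schwarz together with the operator estimate \eqref{eq:convResDiffOP} for the cut-off partial ND map, and bound $\|\dnu e^{ik\zeta}\|_{L^2(\bndry)}$ by $C|k|e^{L|k|}$. Your version is in fact slightly more explicit, pinning down $L=1$ via the pointwise bound $|e^{ik\zeta}|\leq e^{|k|}$ on the unit circle, where the paper leaves $L$ unspecified.
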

\begin{proof}
Estimating for fixed $k$, we have by
\[
\begin{split}
\left|\frac{\T^{\exp}_R(k)-\widetilde{\T}^{\exp}_R(k)}{\bar k}\right| &= \left| \frac{1}{\bar{k}} \int_{\bndry} \left( \dnu \expiconjkz\right) (\ND_{\sigma,1}-\widetilde{\ND}_{1,\sigma}^c)\dnu\expikz dz \right| \\
&\leq \left| \frac{1}{\bar{k}} \right| \left\| \left( \dnu \expiconjkz\right) (\ND_{\sigma,1}-\widetilde{\ND}_{1,\sigma}^c)\dnu\expikz\right\|_{L^1(\bndry)} \\
&\leq \left| \frac{1}{\bar{k}} \right| \left\|\dnu \expiconjkz\right\|_{L^2(\bndry)}\left\|(\ND_{\sigma,1}-\widetilde{\ND}_{1,\sigma}^c)\dnu\expikz\right\|_{L^2(\bndry)} \\
&\leq \frac{C h}{| k|} \left\|\dnu \expiconjkz\right\|_{L^2(\bndry)} \left\|\dnu \expikz \right\|_{L^2(\bndry)},
\end{split}
\]
where we used the operator inequality \eqref{eq:convResDiffOP}. The last norms can be bounded by
\[
 \|\dnu \expiconjkz\|_{L^2(\bndry)} = \|i\bar{k}\bar{z} \expiconjkz\|_{L^2(\bndry)} \leq |{k}| \|{z}\expiconjkz\|_{L^2(\bndry)} \leq C |{k}| e^{L|k|},
\]
which yields the proof.
\end{proof}

The error estimate of the scattering transform is the crucial result for the reconstruction error, since the reconstructed conductivity depends continuously on the scattering transform as shown and utilized in several papers \cite{Knudsen2007,Knudsen2009}. Following this we can establish the linear dependence of the reconstructions.

\begin{theorem}[Reconstruction error]\label{theo:ReconError}
Let $\Omega, \Gamma, \sigma, R, \T^{\exp}_R, \widetilde{\T}^{\exp}_R$ satisfy the assumptions of Proposition \ref{prop:scattering}. Let $\sigma_R$ and $\widetilde{\sigma}_R$ be conductivities reconstructed from $\T^{\exp}_R$, $\widetilde{\T}^{\exp}_R$ respectively. Then, for $0<h\leq \pi$, there exists a constant $C = C(R)>0$ such that
\begin{equation} \label{eq:boundScatOverK}
\|\sigma_R-\widetilde{\sigma}_R\|_{L^2(\Omega)} \leq C h.
\end{equation}
\end{theorem}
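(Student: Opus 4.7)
The plan is to reduce the estimate \eqref{eq:boundScatOverK} to a perturbation analysis of the D-bar equation \eqref{eq:dbark_eq}, using Proposition \ref{prop:scattering} as the pointwise bound that drives everything.

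First I would rewrite the truncated D-bar equation for each $z \in \Omega$ as a Fredholm integral equation in $k$ by inverting $\dbar_k$ via the Cauchy transform. Writing $\mu_R(z,\cdot)$ and $\widetilde{\mu}_R(z,\cdot)$ for the solutions associated with $\T^{\exp}_R$ and $\widetilde{\T}^{\exp}_R$ respectively, both functions satisfy
\[
\mu_R(z,k) = 1 + \frac{1}{\pi}\!\iint_{|k'| \leq R} \frac{1}{k-k'}\,\frac{\T^{\exp}_R(k')}{4\pi\,\overline{k'}}\,e_{-k'}(z)\,\overline{\mu_R(z,k')}\,dk'_1 dk'_2,
\]
and similarly for $\widetilde{\mu}_R$, with $\T^{\exp}_R$ replaced by $\widetilde{\T}^{\exp}_R$. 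Truncating at radius $R$ ensures that the kernel $\T^{\exp}_R(k')/\overline{k'}$ is bounded on $\{|k'| \leq R\}$ (the only delicate points, $k'=0$ and $|k'|\to\infty$, are now harmless), so the equation can be analysed in $L^\infty_k(\D_R)$ by the standard arguments of \cite{Knudsen2007,Knudsen2009}. In particular there is a unique solution depending continuously on the coefficient $\T/\bar{k}$.

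Next I would subtract the two integral equations. The difference $\delta\mu(z,k) := \mu_R(z,k) - \widetilde{\mu}_R(z,k)$ satisfies an equation of the form
\[
\delta\mu(z,k) = F_R(z,k) + \mathcal{K}_R[\delta\mu](z,k),
\]
where the source $F_R$ is the Cauchy transform of $\frac{1}{4\pi}(\T^{\exp}_R-\widetilde{\T}^{\exp}_R)/\overline{k'} \cdot e_{-k'}(z)\overline{\mu_R}$, and $\mathcal{K}_R$ is the linear operator acting by integration against $\T^{\exp}_R/\overline{k'}$. By Proposition \ref{prop:scattering} combined with $|k'| \leq R$, one has $\bigl|(\T^{\exp}_R-\widetilde{\T}^{\exp}_R)(k')/\overline{k'}\bigr| \leq C h\,R\,e^{2LR}$, so $\|F_R(z,\cdot)\|_{L^\infty(\D_R)} \leq C(R)\,h$ uniformly in $z \in \Omega$. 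Standard Neumann-series bounds for $\mathcal{K}_R$ (uniformly in $z$, since $\mu_R$ is bounded) then yield $\|\delta\mu(z,\cdot)\|_{L^\infty(\D_R)} \leq C(R)\,h$ uniformly in $z$.

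Finally, since the reconstructions are $\sigma_R(z) = \mu_R(z,0)^2$ and $\widetilde{\sigma}_R(z) = \widetilde{\mu}_R(z,0)^2$ (with the limit $k\to 0$ as in the classical theory), the pointwise identity
\[
\sigma_R(z) - \widetilde{\sigma}_R(z) = \bigl(\mu_R(z,0)+\widetilde{\mu}_R(z,0)\bigr)\,\delta\mu(z,0)
\]
combined with the uniform $L^\infty$-bounds on $\mu_R,\widetilde{\mu}_R$ gives $|\sigma_R - \widetilde{\sigma}_R|(z) \leq C(R)\,h$ uniformly in $z$. Integrating over the bounded domain $\Omega$ yields the claimed $L^2(\Omega)$ estimate. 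The main technical obstacle is the uniform invertibility of $\id - \mathcal{K}_R$: this is where one uses that $\T^{\exp}_R$ arises from a genuine conductivity and hence the D-bar equation is solvable in the appropriate space, quoted from \cite{Knudsen2007,Knudsen2009}; the explicit dependence of the Neumann-series bound on $R$ is what makes the final constant $R$-dependent but $h$-independent.
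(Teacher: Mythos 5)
Your proposal takes essentially the same route as the paper: bound the difference of the truncated scattering transforms via Proposition \ref{prop:scattering}, transfer this to a bound on $\mu_R-\widetilde{\mu}_R$ through the continuous dependence of the D-bar solution on $\T/\bar{k}$, and finish with the factorization $\sigma_R-\widetilde{\sigma}_R=(\mu_R(\cdot,0)+\widetilde{\mu}_R(\cdot,0))(\mu_R(\cdot,0)-\widetilde{\mu}_R(\cdot,0))$ together with uniform $L^\infty$ bounds. The only structural difference is that the paper obtains the continuous-dependence step by citing \cite[Lemma 2.1]{Knudsen2009} (after verifying its hypothesis that $\T^{\exp}_R(k)/\bar{k}$ and $\widetilde{\T}^{\exp}_R(k)/\bar{k}$ are bounded on $|k|\leq R$), whereas you re-derive it by subtracting the two Cauchy-transformed integral equations. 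Two caveats on your inline derivation. First, a literal Neumann series for $\id-\mathcal{K}_R$ will not converge in general: the operator norm of $\mathcal{K}_R$ scales with $\sup_{|k|\leq R}|\T^{\exp}_R(k)/\bar{k}|\sim R\,e^{LR}$, which is not small, so uniform invertibility must come from the Fredholm-plus-Liouville argument (vanishing theorem for pseudoanalytic functions) for the conjugate-linear Cauchy operator, not from a contraction estimate. Second, that invertibility cannot be justified by saying $\T^{\exp}_R$ ``arises from a genuine conductivity'' --- it does not, being a truncated Born approximation; what actually saves the argument is that the solvability and stability theory in \cite{Knudsen2009} holds for any compactly supported coefficient with $\T/\bar{k}\in L^p\cap L^{p'}$, which is exactly the condition the paper checks. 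With those two repairs your argument is correct and yields the same $C(R)\,h$ bound as the paper's proof.
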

\begin{proof}
We want to apply Lemma 2.1 from \cite{Knudsen2009}, that states continuous dependence of the solution to the D-bar equation on the scattering transform, for which we first need to show boundedness of $\T^{\exp}_R(k)/ \bar{k}$ and $\widetilde{\T}^{\exp}_R(k) / \bar{k}$. By the same arguments of the proof of Proposition \ref{prop:scattering} we have
\begin{equation}
\left| \frac{\T^{\exp}_R(k)}{\bar{k}}\right| \leq C |k| e^{L|k|}, \qquad \left| \frac{\widetilde \T^{\exp}_R(k)}{\bar{k}}\right| \leq C |k| e^{L|k|},
\end{equation}
for some constants $C,L>0$ and for $|k| \leq R$.
Now \cite[Lemma 2.1]{Knudsen2009} for the solutions of the D-bar equation gives
\begin{equation}\label{eq:Lemma21Use}
\|\mu_R(x,\cdot)-\tilde{\mu}_R(x,\cdot)\|_{C^\alpha(B_0(R))}\leq C\left\|\frac{\T^{\exp}_R-\widetilde{\T}^{\exp}_R}{\bar{k}}\right\|_{L^p\cap L^{p'}(B_0(R))},
\end{equation}
where the constant depends on the support of the scattering transform with $3/4<p<2$, $\alpha < 2/p -1$, and $1/p'=1-1/p$. The right hand of \eqref{eq:Lemma21Use} can be estimated by using Proposition \ref{prop:scattering}:
\begin{equation}\label{eq:scatOverK_lin}
\begin{split}
\left\|\frac{\T^{\exp}_R-\widetilde{\T}^{\exp}_R}{\bar{k}}\right\|_{L^p\cap L^{p'}(B_0(R))} \leq C(R) h.
\end{split}
\end{equation}
The reconstruction at a point $x\in\Omega$ is given by $\sigma_R(x)=\mu_R(x,0)^2$, $\widetilde{\sigma}_R(x)=\widetilde{\mu}_R(x,0)^2$ resp., and we obtain
\[
\begin{split}
\|\sigma_R-\widetilde{\sigma}_R\|_{L^2(\Omega)} &= \| \mu_R(\cdot,0)^2- \widetilde{\mu}_R(\cdot,0)^2 \|_{L^2(\Omega)}  \\
&= \| (\mu_R(\cdot,0)- \widetilde{\mu}_R(\cdot,0))(\mu_R(\cdot,0)+ \widetilde{\mu}_R(\cdot,0)) \|_{L^2(\Omega)}  \\
&\leq C \|\mu_R(\cdot,0)- \widetilde{\mu}_R(\cdot,0) \|_{L^2(\Omega)} \\
&\leq C \|\mu_R(\cdot,0)- \widetilde{\mu}_R(\cdot,0) \|_{L^\infty(\Omega)}.
\end{split}
\]
Further, for fixed $x\in\Omega$
\[\begin{split}
|\mu_R(x,0)- \widetilde{\mu}_R(x,0)| &\leq C \|\mu_R(x,\cdot)- \widetilde{\mu}_R(x,\cdot) \|_{L^\infty(B_0(R))} \\
 &\leq C \|\mu_R(x,\cdot)- \widetilde{\mu}_R(x,\cdot) \|_{C^\alpha(\R^2)}.
\end{split}
\]
The linear dependence \eqref{eq:boundScatOverK} in the claim follows from \eqref{eq:Lemma21Use} and \eqref{eq:scatOverK_lin}.
\end{proof}

\section{Computational results}\label{sec:computationalResults}
The first results presented in this section are for phantoms on the unit disk, for which we have proven linear convergence of the data error. We will verify that the estimates hold. In the following we will investigate how well the CGO solutions are preserved visually. Reconstructions are then presented and we verify the reconstruction error proved in Theorem \ref{theo:ReconError}. In the last part of our computations we will present a more realistic medical motivated Heart-and-Lungs phantom on a non-circular chest-shaped mesh. For better readability, we have included a short discussion at the end of each subsection. But first we need to address a few computational aspects.

\begin{figure}[t!]
\centering
\begin{picture}(380,250)

\put(0,0){\includegraphics[width=150 pt]{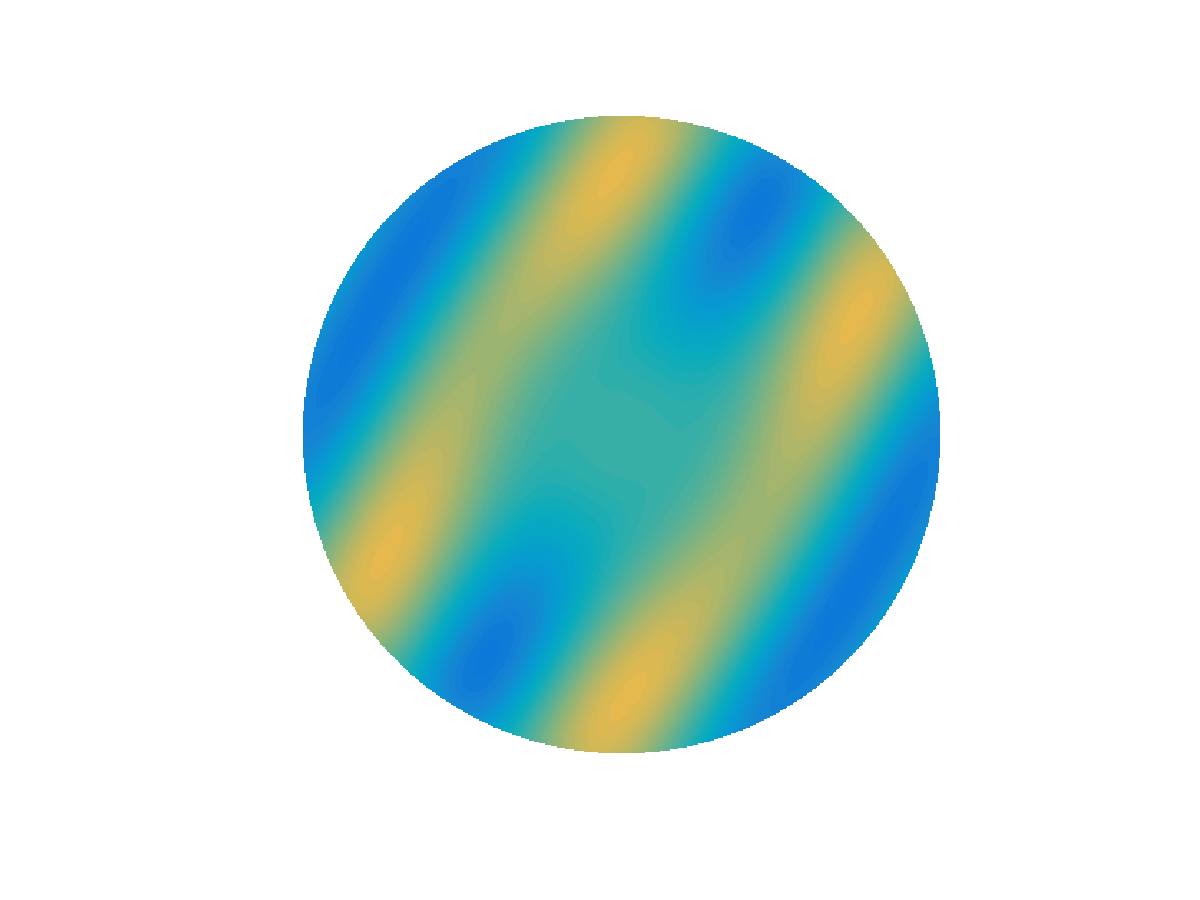}}
\put(0,120){\includegraphics[width=150 pt]{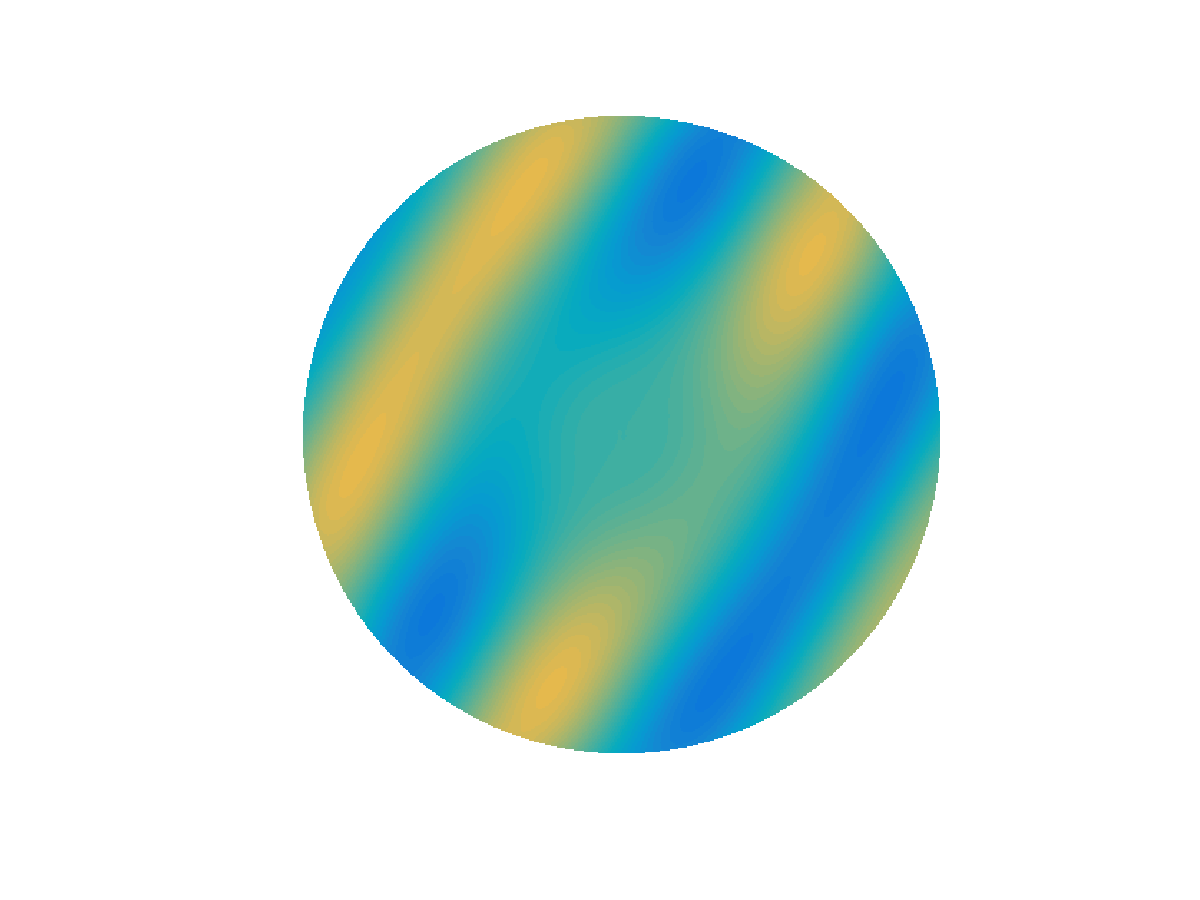}}

\put(120,0){\includegraphics[width=150 pt]{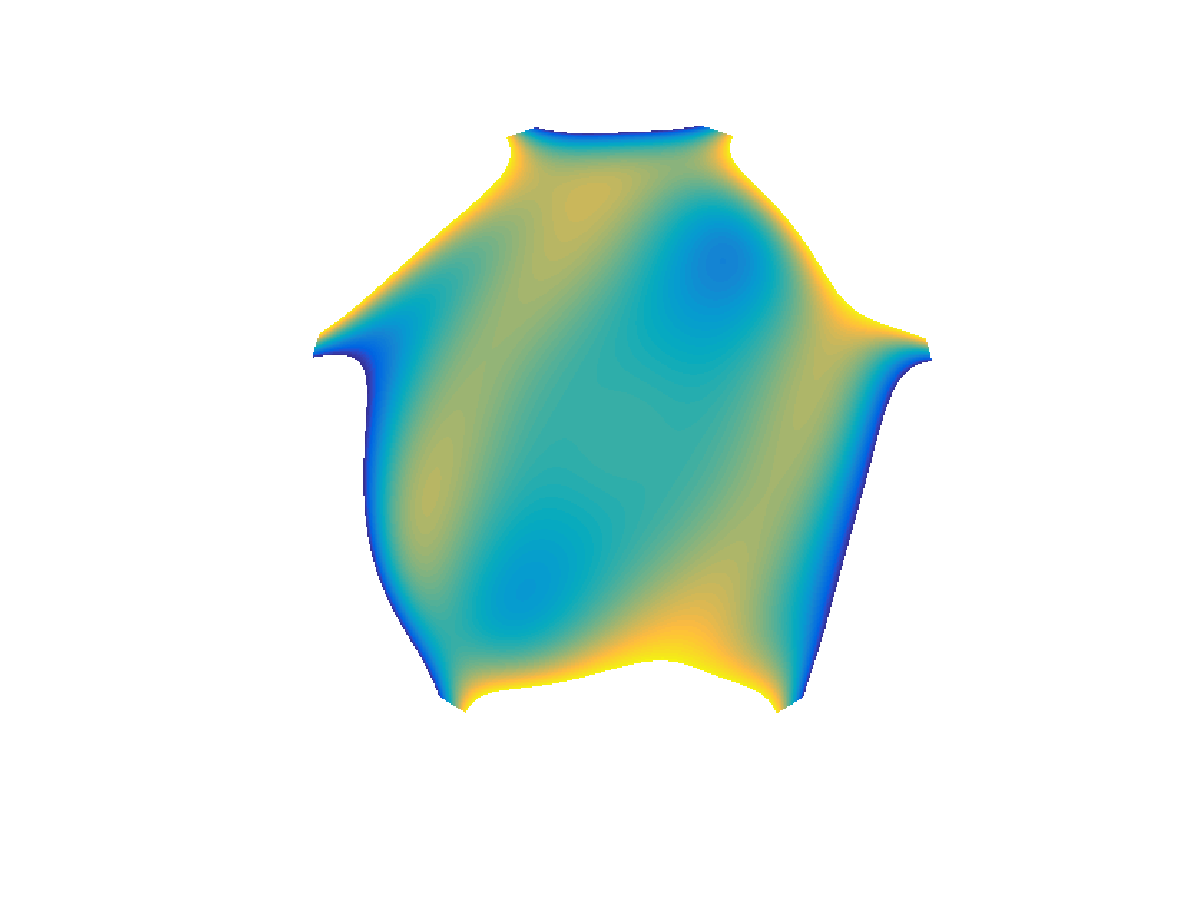}}
\put(120,120){\includegraphics[width=150 pt]{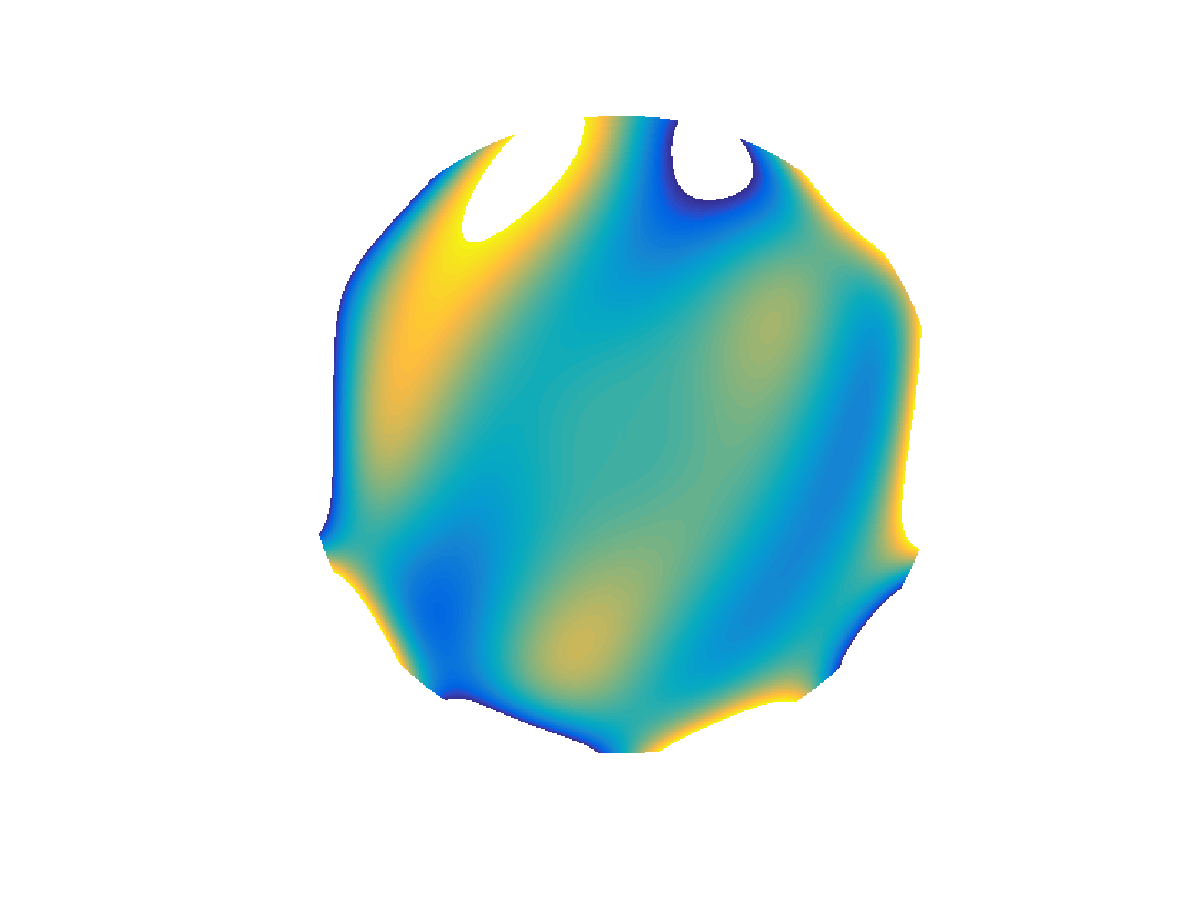}}

\put(240,0){\includegraphics[width=150 pt]{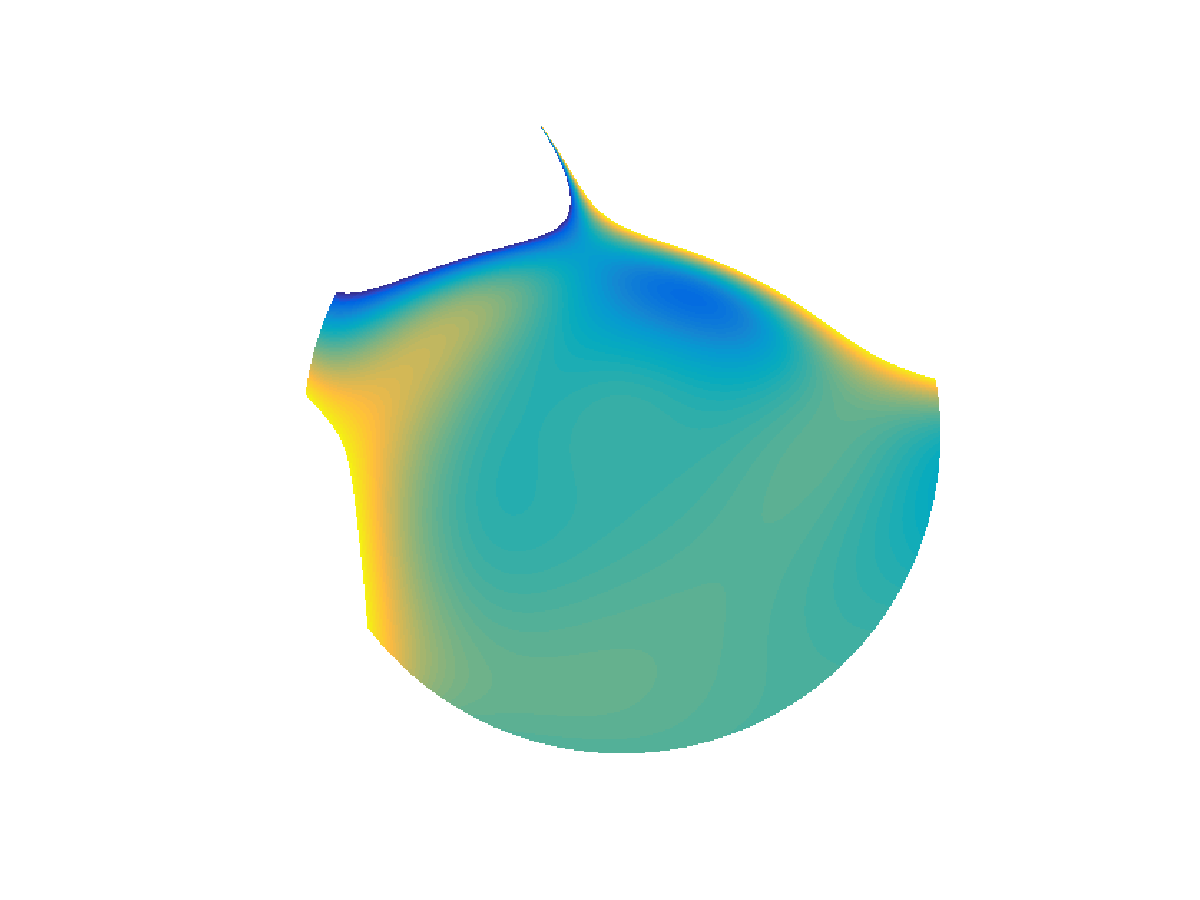}}
\put(240,120){\includegraphics[width=150 pt]{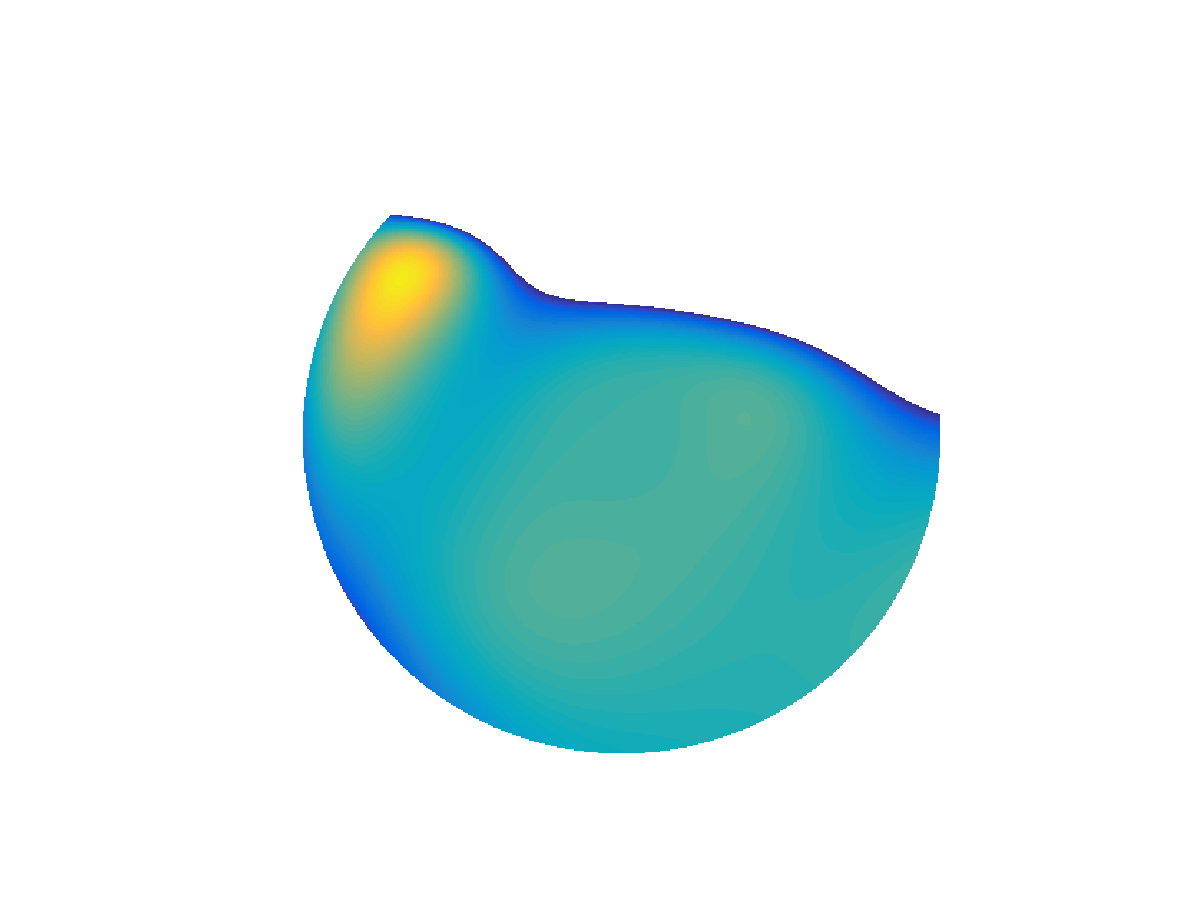}}

\put(90,240){Full boundary data}
\put(50,225){Noise free}
\put(185,225){Noisy}
\put(265,240){Partial boundary data}

\put(10,140){\rotatebox{90}{imaginary part}}
\put(10,40){\rotatebox{90}{real part}}

\end{picture}
\caption{\label{fig:scatTrafos} Computed scattering transform $\T^{\exp}(k)$ for $|k|<6$ of the circle phantom. The scattering transform for full-boundary data (Left) is stable in the disk of radius 6. Full boundary data with noise (Middle) causes instability. 
In the partial data case for 50\% of the boundary (Right) we see that the instability does not form in the typical circular manner. This suggests to use a cut-off radius combined with a threshold parameter.}
\end{figure} 

The equations \eqref{eq:psiExp} and \eqref{eq:scatExp} are defined for all $k\in\C\backslash\{0\}$. Obviously this is computationally not feasible and we need to restrict ourselves to solving on a disk of a radius $R$: this approach is a proven regularization technique \cite{Knudsen2009} in the DN case. The instability under noise can be easily observed in the scattering transform, see Figure \ref{fig:scatTrafos} for an illustration. For full-boundary data with noise, the instability forms outside a stable circle with radius $R>0$ such that $|k|<R$, hence for stabilizing the reconstruction procedure a cut-off radius is a well working approach. With partial-boundary data this is not the case any more as seen in the right column of Figure \ref{fig:scatTrafos}. Thus, we propose to use an additional threshold parameter $c>0$ and set
\[
\T^{\mathrm{ND}}_{R,c}(k)=
\left\lbrace\begin{array}{cl}
\T_R^{\mathrm{ND}}(k) &\text{ if } |\mathrm{Re}(\T_R^{\mathrm{ND}}(k))|< c\ \text{or}\ |\mathrm{Im}(\T_R^{\mathrm{ND}}(k))| < c.\\
0 &\mbox{ else. }
\end{array} \right.
\]
This approximate scattering transform will then be used to solve the D-bar equation \eqref{eq:dbark_eq} to obtain a reconstruction 
\[
\sigma^{R,c}(z)=(\mu^{R,c}(z,0))^2.
\]

\subsection{Verifying the error estimates}
In this section we verify that the error estimates from Section \ref{sec:err_analy} hold also numerically. From Proposition \ref{prop:convergence_single} we have the linear error estimates for single currents $\varphi_n(\theta)=\frac{1}{\sqrt{2\pi}} e^{in\theta}$. 

At first we are interested if the asymptotic behaviour holds for the constant conductivity $\sigma\equiv 1$, i.e. the classical Laplace equation. The error of the scaling basis depends also on the order $n\in\Z\backslash \{0\}$. Since we compute only for small $n$, this has no notable effect on the computational convergence. 
The computed relative errors can be seen in Figure \ref{fig:conv_laplace_single}, on the left for the scaling partial-boundary map and on the right for the cut-off case. We emphasize that for this error estimate we assume to know the measurement on the whole boundary $\bndry$, whereas the Neumann data is only supported on $\Gamma$. The error is computed with respect to the known analytical solution
\[
\ND_\sigma\varphi_n(\theta)=\frac{1}{|n|\sqrt{2\pi}}e^{in\theta}.
\]
\begin{figure}[ht!]
\centering
\begin{picture}(350,180)
\put(181,-10){\includegraphics[width=250 pt]{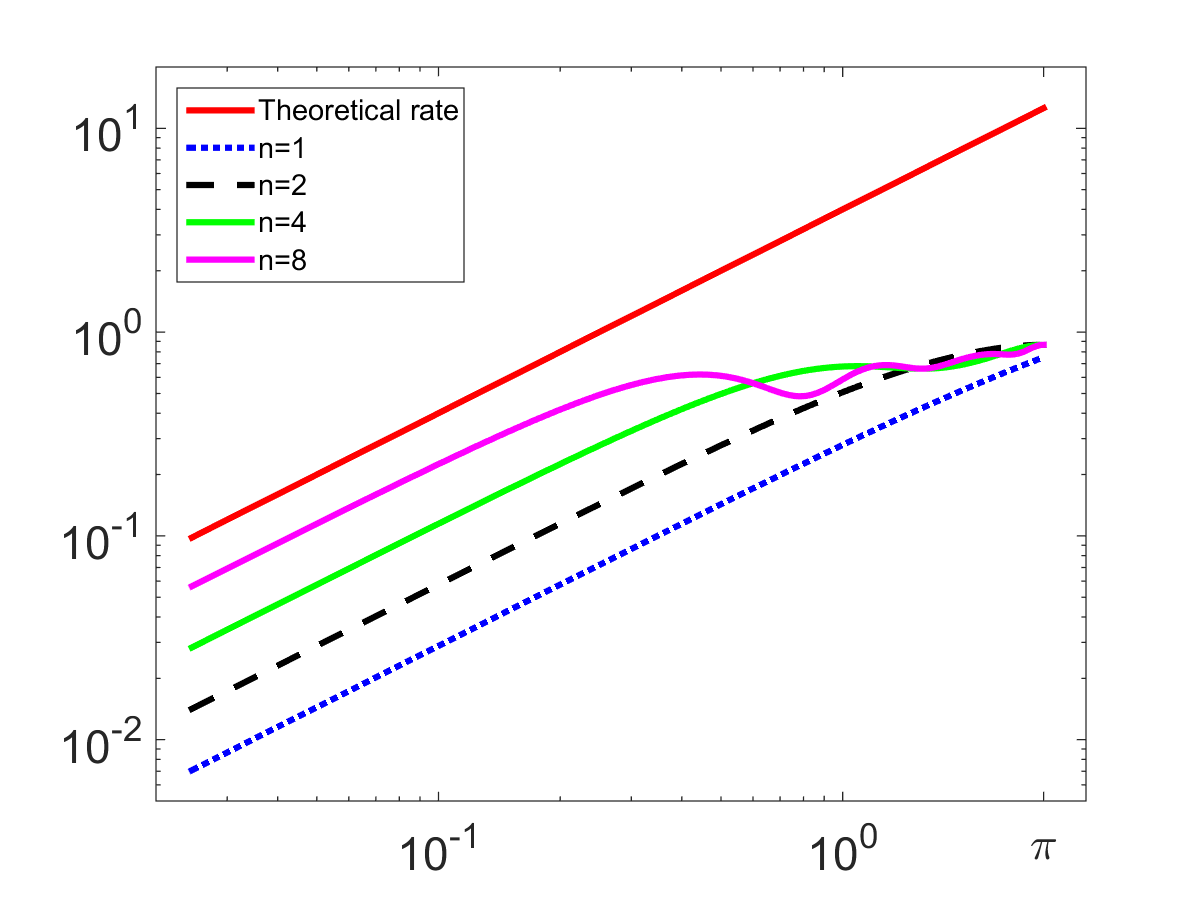}}
\put(-56,-10){\includegraphics[width=250 pt]{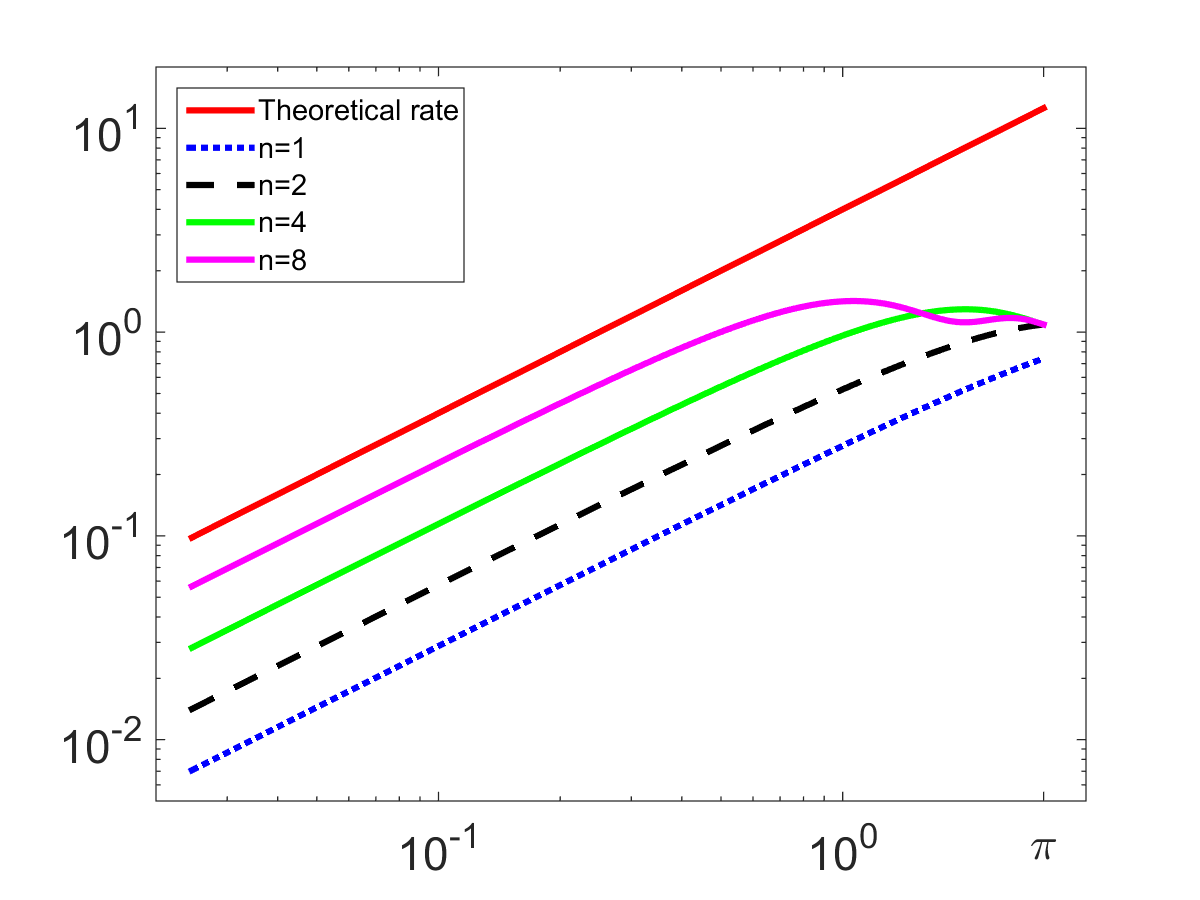}}
\put(75,-5){\large h}
\put(310,-5){\large h}
\put(-55,35){\rotatebox{90}{\footnotesize $\|(\widetilde{\ND}^s_1-\ND_1)\varphi_n\|_2/\|\ND_1\varphi_n\|_2$ }}
\put(180,35){\rotatebox{90}{\footnotesize $\|(\widetilde{\ND}_1^c-\ND_1)\varphi_n\|_2/\|\ND_1\varphi_n\|_2$ }}
\put(28,175){Data error: Scaling}
\put(265,175){Data error: Cut-off}
\end{picture}
\caption{\label{fig:conv_laplace_single} Relative error of measurements from the {partial ND map} to the full-boundary ND map. Computations are for the Laplace equation and different order $n$ of the current patterns.  On the left for the scaling and on the right for the cut-off partial-boundary map.}
\end{figure} 

The next test is to verify that the convergence rates are preserved under conductivities different to 1. We use a circular inclusion and the Heart-and-Lungs phantom for comparison. The computed relative errors can be seen in Figure \ref{fig:conv_sigma_single}. These computations are still for ideal full-boundary measurements. Here we computed the reference full-boundary measurements with high accuracy for the computation of the error.

\begin{figure}[ht!]
\centering
\begin{picture}(350,180)
\put(181,-10){\includegraphics[width=250 pt]{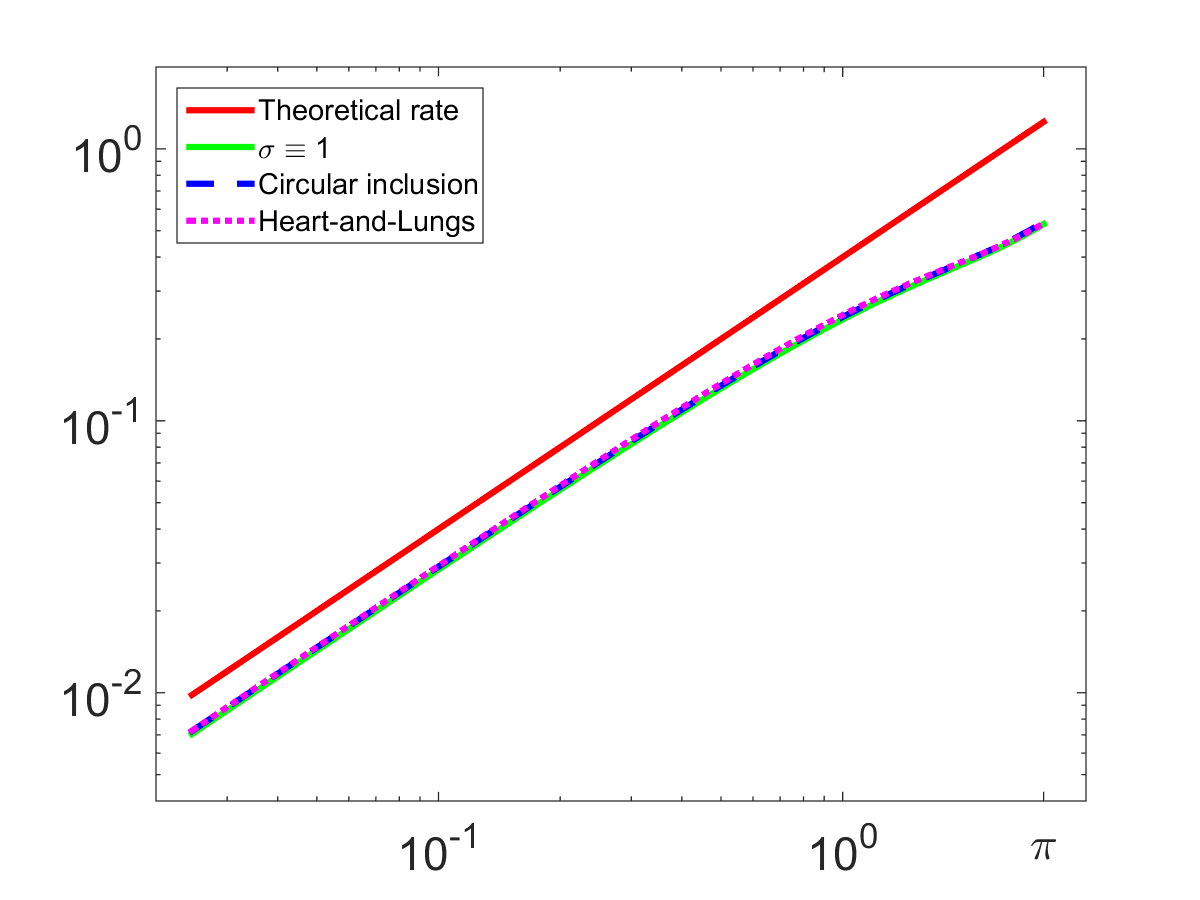}}
\put(-56,-10){\includegraphics[width=250 pt]{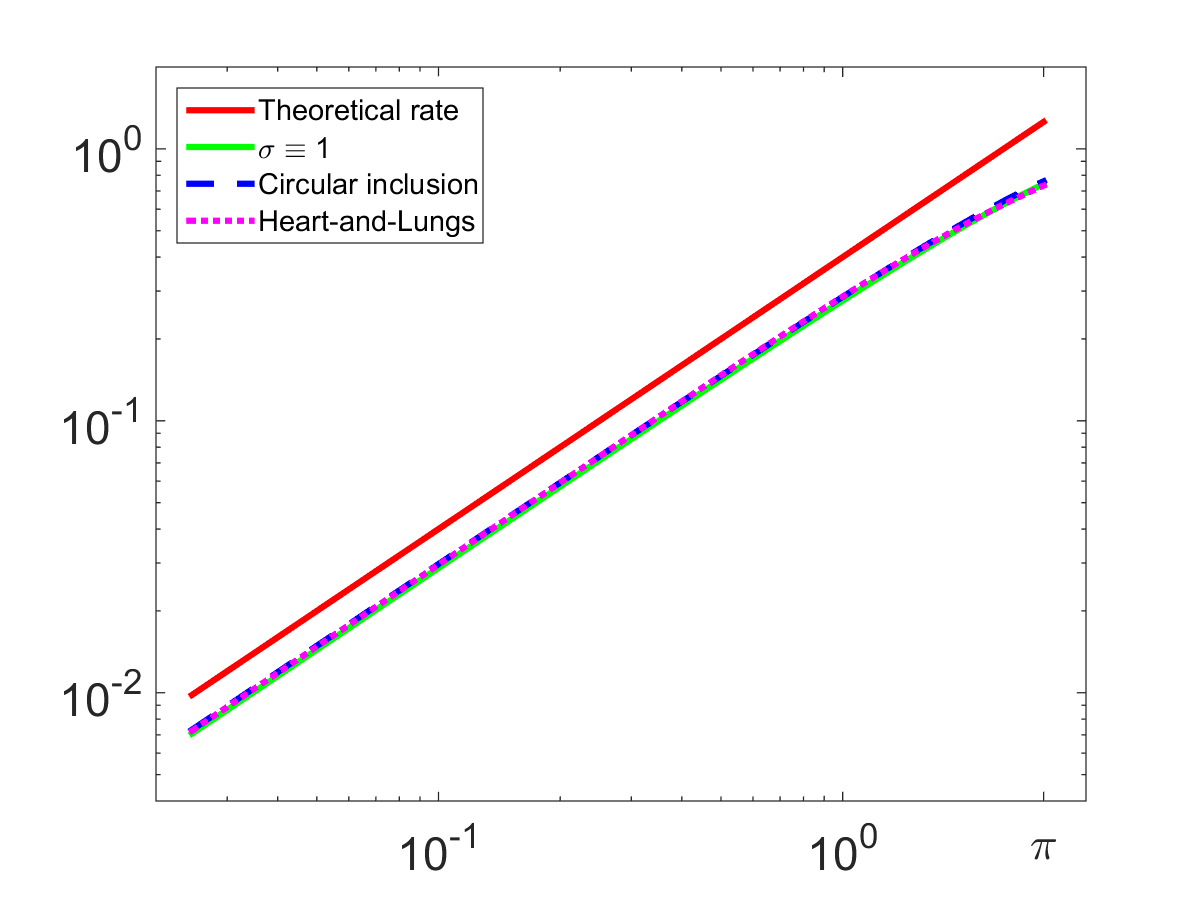}}
\put(75,-5){\large h}
\put(310,-5){\large h}
\put(-55,35){\rotatebox{90}{\footnotesize $\|(\widetilde{\ND}^s_\sigma-\ND_\sigma)\varphi_1\|_2/\|\ND_\sigma\varphi_1\|_2$ }}
\put(180,35){\rotatebox{90}{\footnotesize $\|(\widetilde{\ND}^c_\sigma-\ND_\sigma)\varphi_1\|_2/\|\ND_\sigma\varphi_1\|_2$ }}
\put(30,175){Data error: Scaling}
\put(265,175){Data error: Cut-off}
\end{picture}
\caption{\label{fig:conv_sigma_single} Relative error of measurements from the {partial ND map} to the full-boundary ND map for different conductivities $\sigma$ and fixed order $n=1$. We compare the constant conductivity, a circular inclusion close to the boundary $\Gamma$ and the Heart-and-Lungs phantom. On the left for the scaling and on the right for the cut-off partial-boundary map.}
\end{figure}

\begin{figure}[h!]
\centering
\begin{picture}(350,180)
\put(181,-10){\includegraphics[width=250 pt]{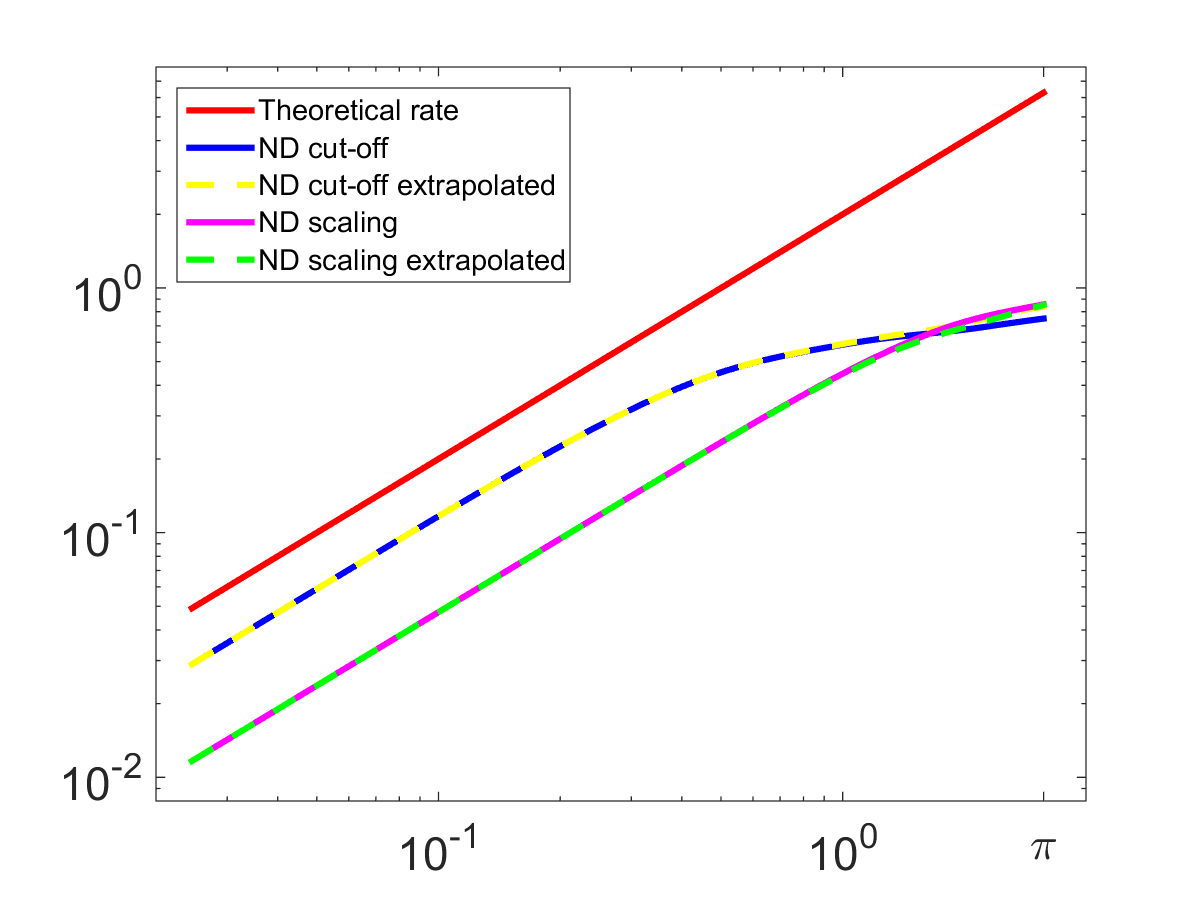}}
\put(-56,-10){\includegraphics[width=250 pt]{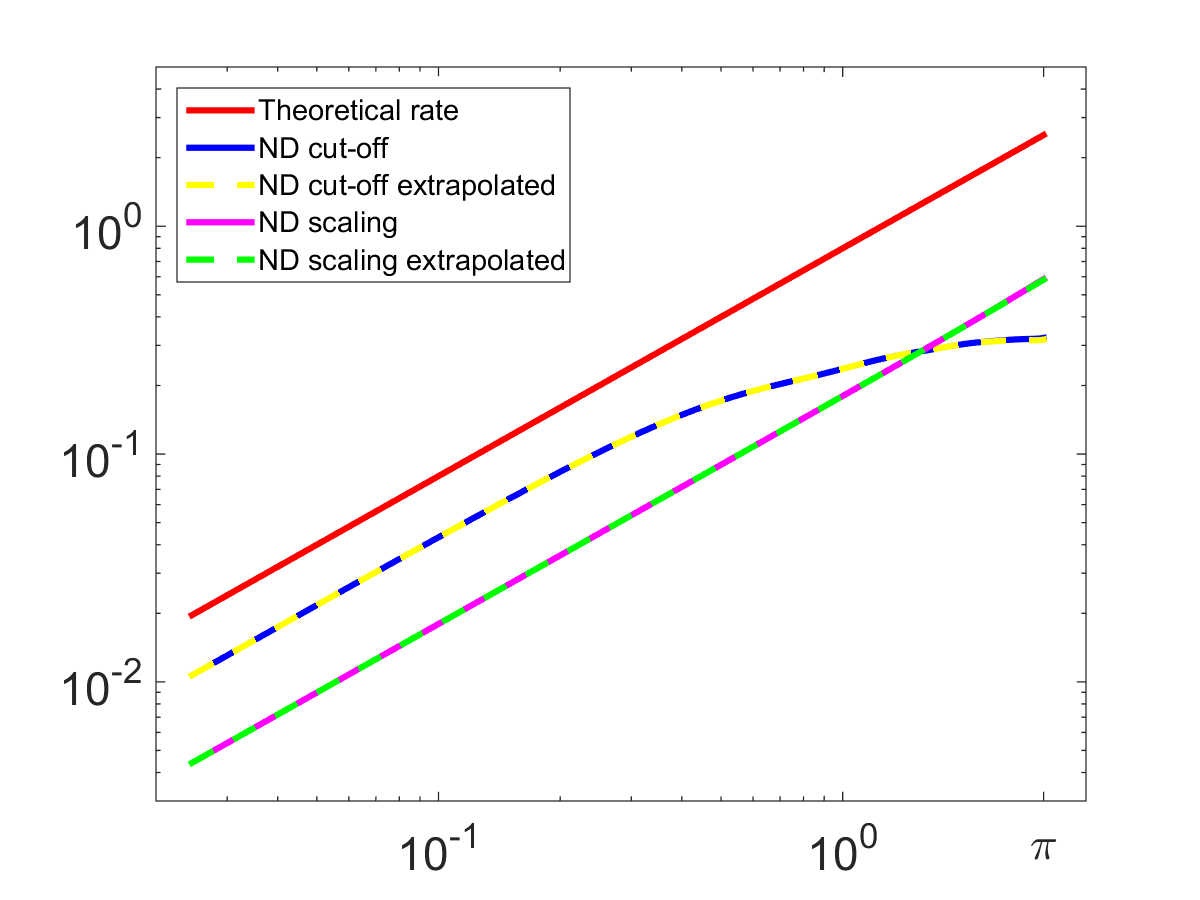}}
\put(75,-5){\large h}
\put(310,-5){\large h}
\put(-55,40){\rotatebox{90}{\footnotesize $\|\widetilde{\mathbf{R}}_{\sigma,1}-\mathbf{R}_{\sigma,1}\|_2/\|\mathbf{R}_{\sigma,1}\|_2$ }}
\put(180,40){\rotatebox{90}{\footnotesize $\|\widetilde{\mathbf{R}}_{\sigma,1}-\mathbf{R}_{\sigma,1}\|_2/\|\mathbf{R}_{\sigma,1}\|_2$ }}
\put(7,175){Data error: Circular inclusion}
\put(245,175){Data error: Heart-and-Lungs}
\end{picture}
\caption{\label{fig:conv_NDmatrices} Relative error of the difference partial ND matrices to the full-boundary ND matrix for both partial-boundary maps. On the left for the circular inclusion and on the right for the Heart-and-Lungs phantom, with comparison of ideal full-boundary information and extrapolated data.}
\end{figure} 

At last we compute the partial ND matrices $(\widetilde{\mathbf{R}}_\sigma)_{n,\ell}=(\widetilde{\ND}_\sigma \varphi_n,\varphi_\ell)$, 
with 16 basis functions, such that $n,\ell \in \{\pm 1,\pm 2,\dots,\pm 8\}$. The error estimate for the difference {partial ND map} has been stated in Corollary \ref{cor:DiffND} and can be directly used to estimate the error of computed ND matrices. The difference partial ND matrices $\widetilde{\mathbf{R}}_{\sigma,1}$ are computed from ideal measurements and from extrapolated boundary traces. The resulting relative errors are plotted in Figure \ref{fig:conv_NDmatrices}.

In all three examples the convergence for small $h$ has linear behaviour and follows the predicted rate. In Figure \ref{fig:conv_laplace_single} one can see that for higher order $n$ the convergence breaks down for large $h$ due to numerical instabilities. For smaller order the convergence rate is preserved up to 50\% of missing boundary, that is $|\Gamma^c|=\pi$. For the three different conductivities, the convergence graphs have very similar shape as seen in Figure \ref{fig:conv_sigma_single}, only the constants of the error term change slightly. Introducing the extrapolation for computing the ND matrices, we can see in Figure \ref{fig:conv_NDmatrices} that the error is preserved and only a slight difference for large $h$ occurs due to inaccuracies in the estimation process.

\subsection{Computing the CGO solutions}
We compute the CGO solutions directly from the ND map, as discussed in Section \ref{sec:reconError}. The approximate CGO solutions $\psi^{\mathrm{ND}}(z,k)$ can then be computed for fixed $k\in\C\backslash\{0\}$ from \eqref{eq:psiExp}.
For illustrating the results we use the concept of the CGO sinogram in the unit disk as introduced in \cite{Hamilton2014}. That is for $\varphi, \theta\in[0,2\pi]$ and $r>0$ the CGO sinogram is given by
\begin{equation}
\begin{split}
S_\sigma(\theta,\varphi,r)&=\mu(e^{i\theta},e^{i\varphi})-1 \\
&=\exp(-ire^{i(\varphi+\theta)})\psi(e^{i\theta},e^{i\varphi})-1.
\end{split}
\end{equation}
For a fixed radius $r>0$ the CGO sinogram consists of all CGO solutions with $|k|=r$ and hence is especially suitable for illustration. In Figure \ref{fig:CGOsol} one can see the CGO sinogram for the circular inclusion phantom and the radius $r=2$. The reference solution is computed by solving the Lippmann-Schwinger type equation \eqref{eq:LippSchw} for $\mu$ directly from the knowledge of the conductivity. The approximate CGO solutions are computed from the ND map by \eqref{eq:psiExp}. We compare the results for full-boundary data to measurements from 75\% of the boundary for both partial-boundary maps.

\begin{figure}[ht!]
\centering
{
\begin{picture}(400,200)


\put(280,100){\includegraphics[width=125 pt]{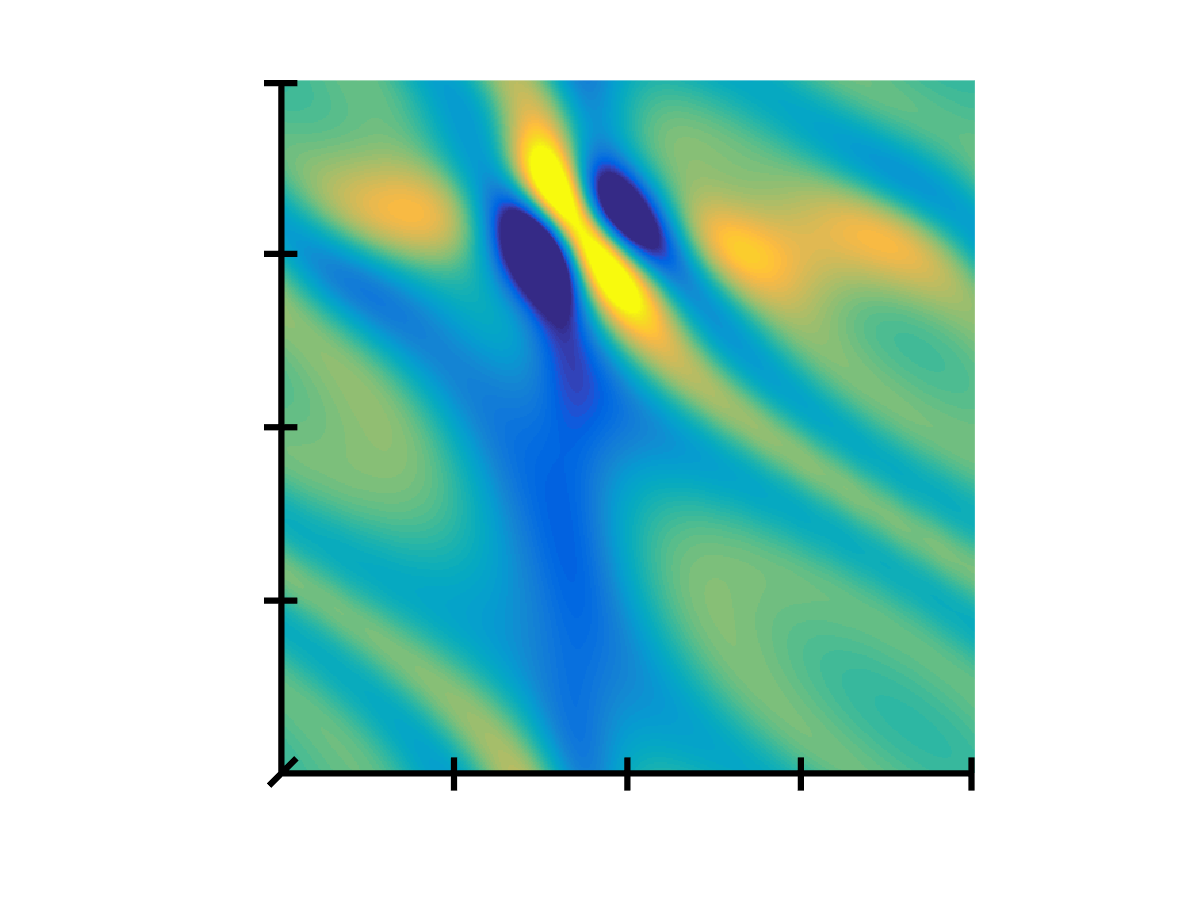}}
\put(280,0){\includegraphics[width=125 pt]{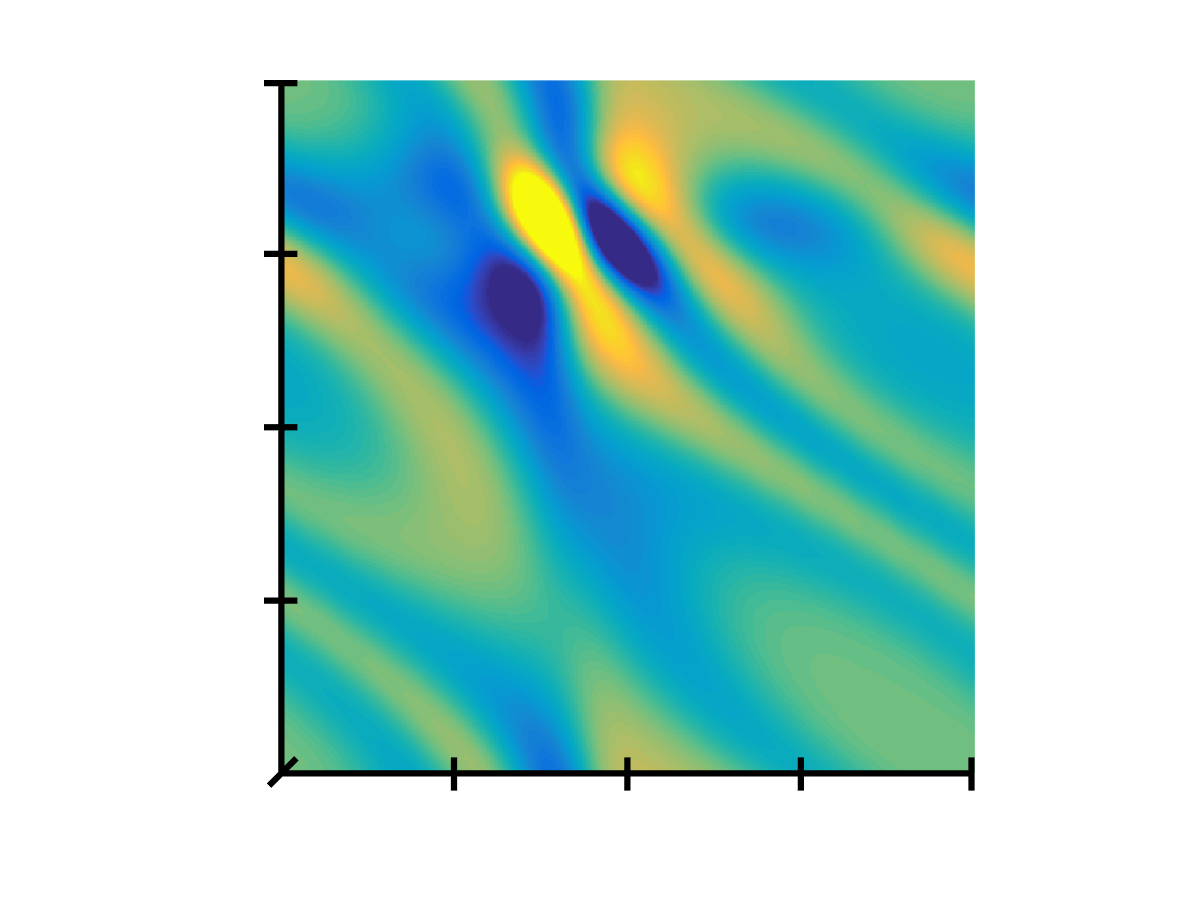}}

\put(170,100){\includegraphics[width=125 pt]{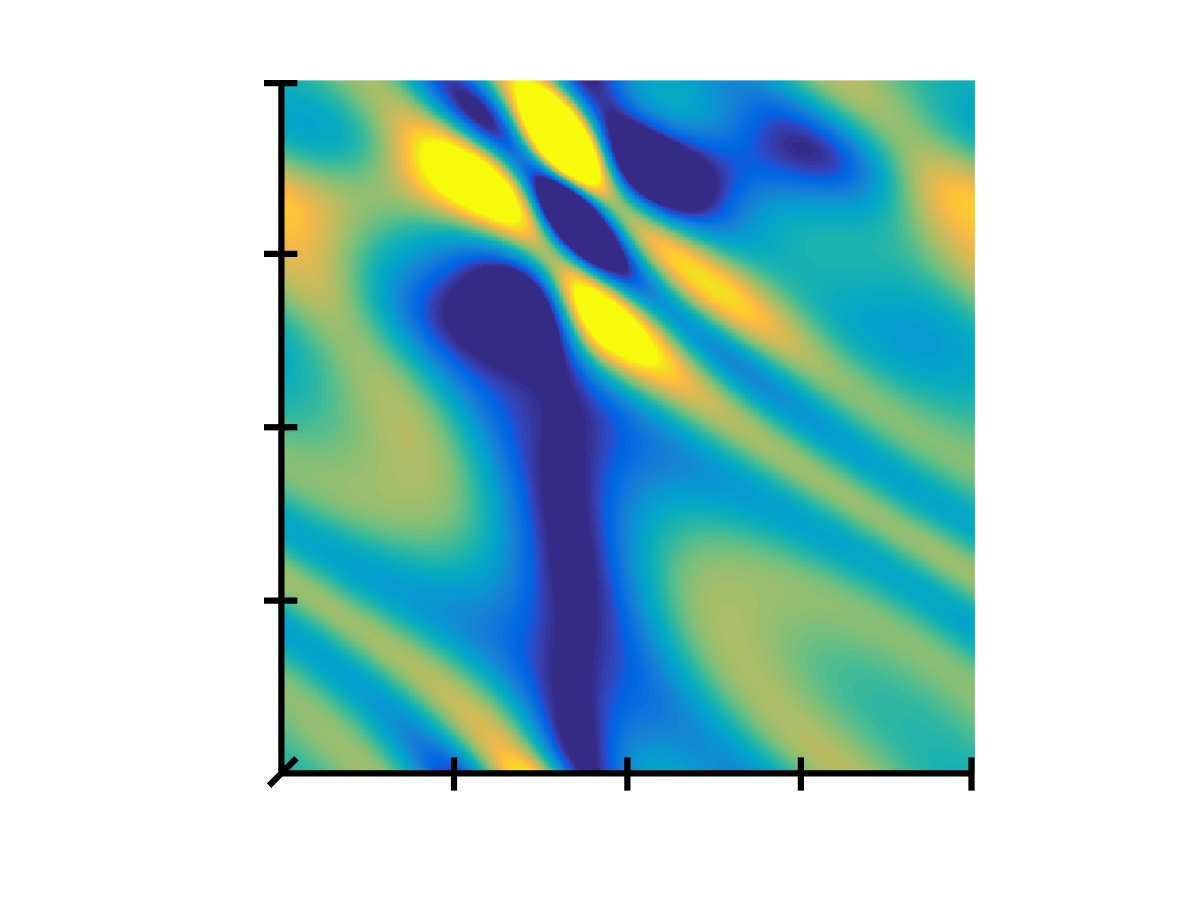}}
\put(170,0){\includegraphics[width=125 pt]{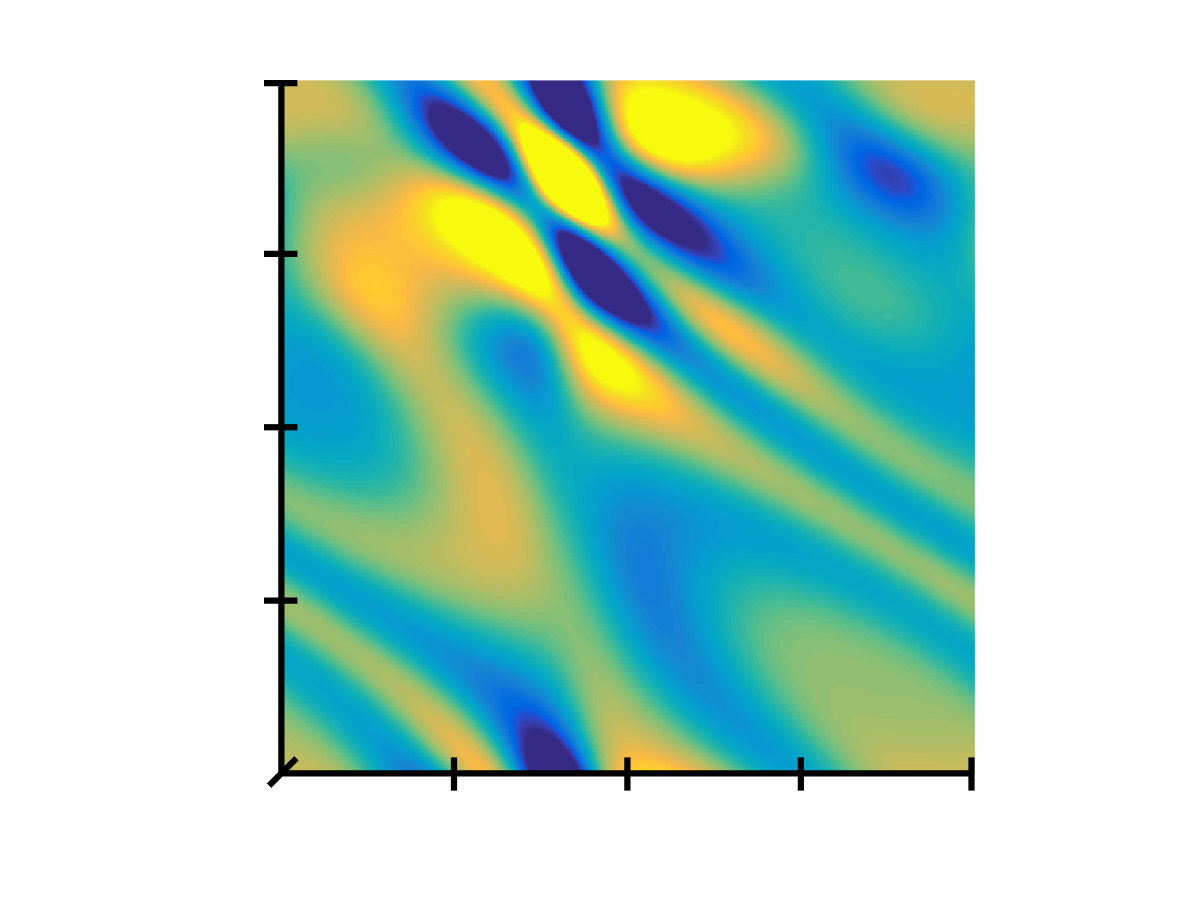}}

\put(60,100){\includegraphics[width=125 pt]{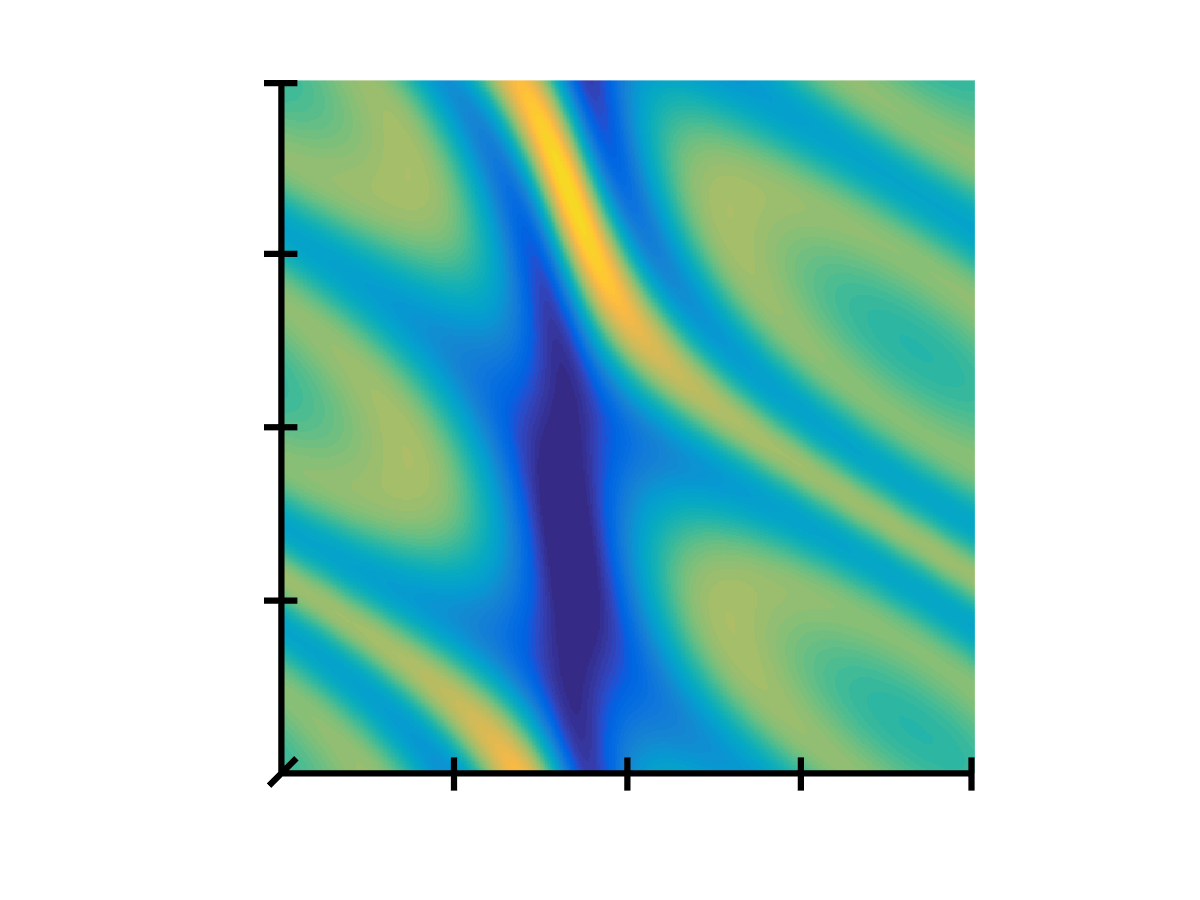}}
\put(60,0){\includegraphics[width=125 pt]{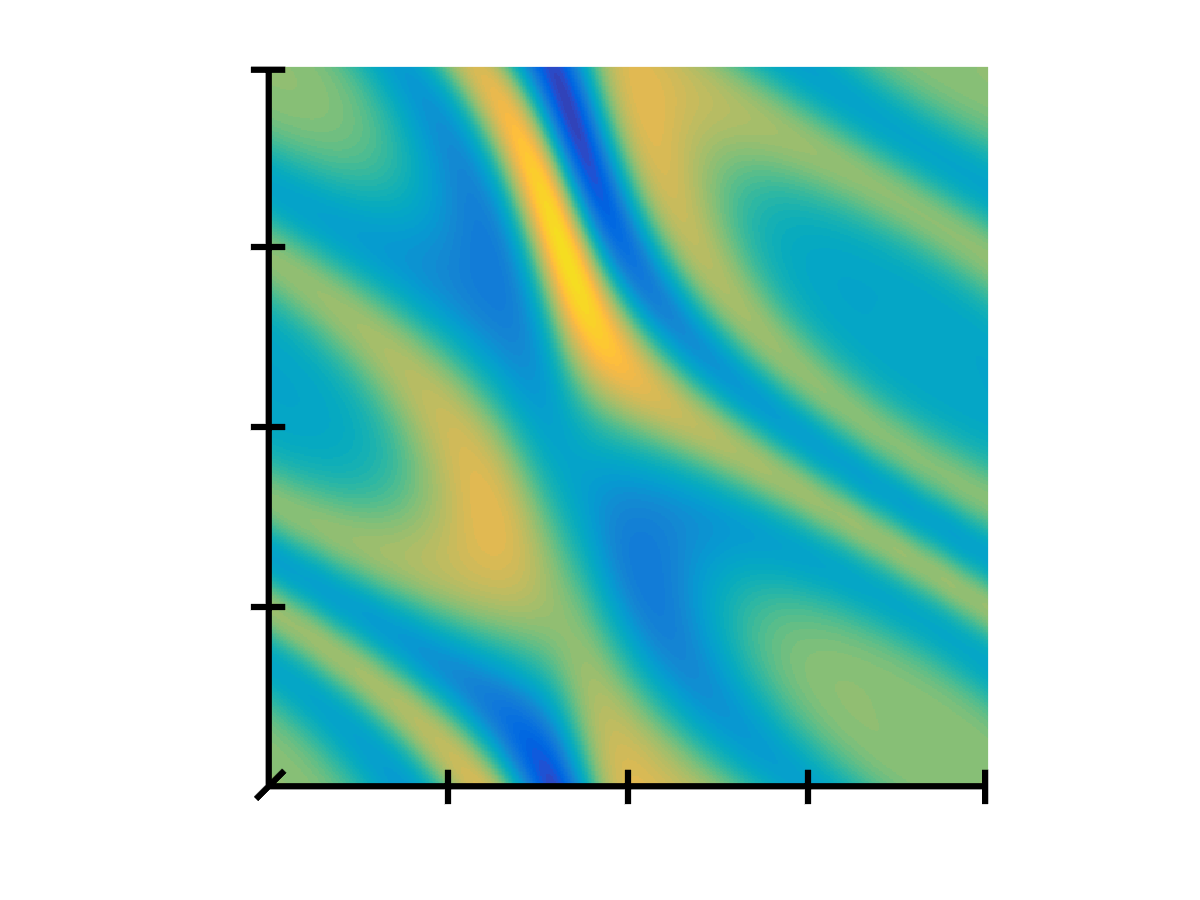}}

\put(-50,100){\includegraphics[width=125 pt]{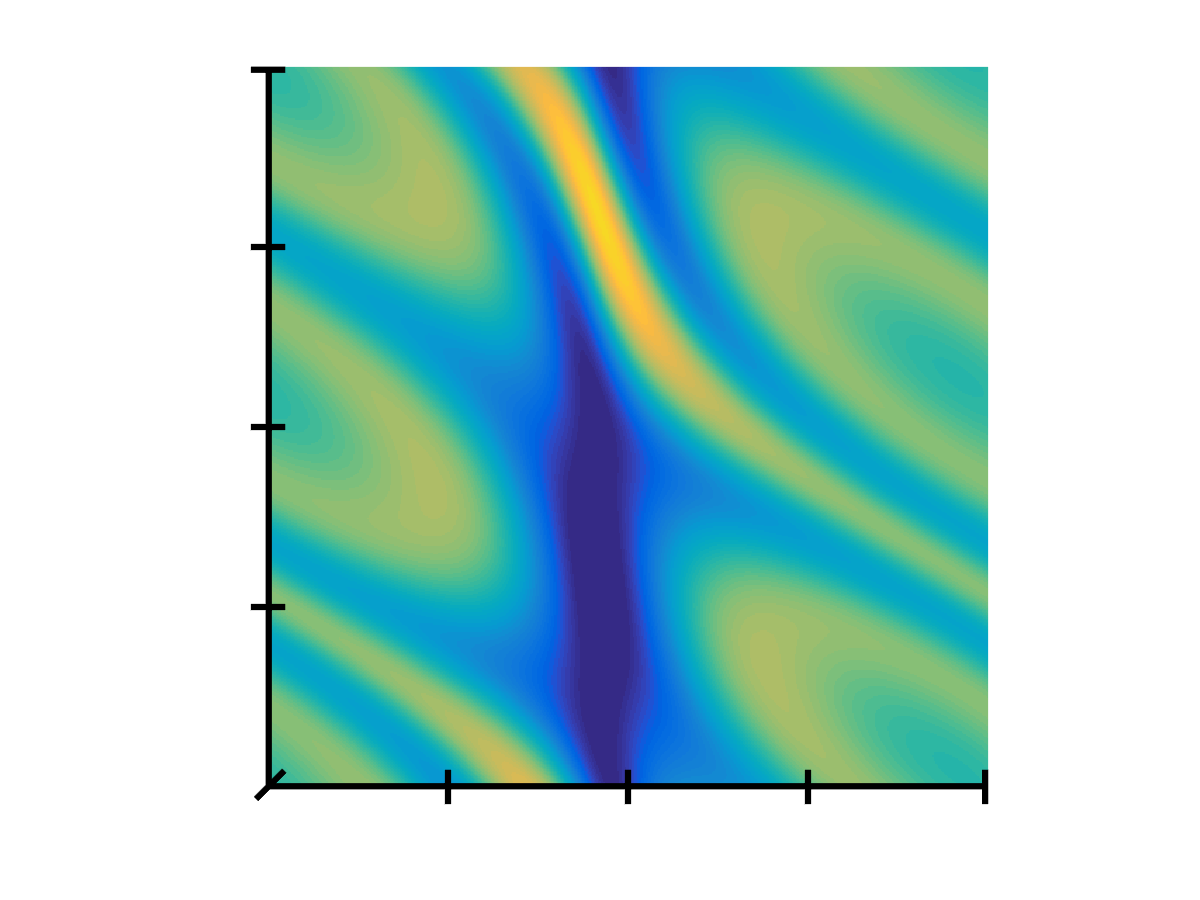}}
\put(-50,0){\includegraphics[width=125 pt]{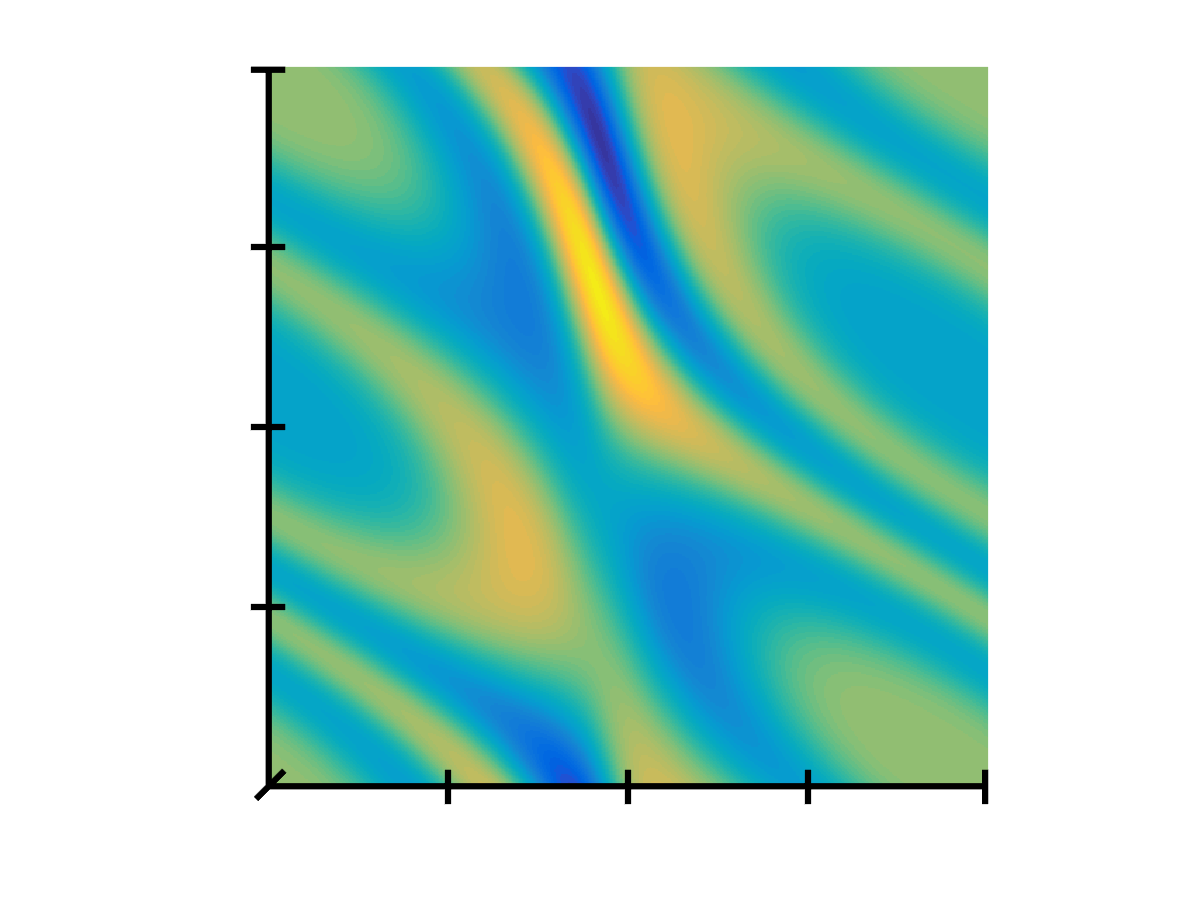}}

\put(-10,195){Reference}
\put(390,153){\rotatebox{-90}{real}}
\put(390,70){\rotatebox{-90}{imaginary}}
\put(-50,35){\large \rotatebox{90}{$\arg(k)$}}
\put(0,-10){\large {$\arg(z)$}}
\put(90,195){Full boundary}
\put(-32,3){0}
\put(48,0){$2\pi$}
\put(12,0){$\pi$}
\put(-35,84){$2\pi$}
\put(-32,47){$\pi$}
\put(207,195){75\% cut-off}
\put(315,195){75\% scaling}

\end{picture} }
\caption{\label{fig:CGOsol} Computed CGO sinogram $S_\sigma(\theta,\varphi,r)$ for $|k|=r=2$ of a circular inclusion. (Left) Reference by solving the Lippmann-Schwinger type equation for $\mu$. The solutions to the right are computed from \eqref{eq:psiExp} for full-boundary data and 75\% of the boundary available. We plotted the angular variable $\theta$ for $z$ on the x-axis and $\varphi$ for $k$ on the y-axis of each image.}
\end{figure}

From Figure \ref{fig:CGOsol} one can see in the second column that approximating the CGO solutions by \eqref{eq:psiExp} seems to be reasonable. The characteristics are well preserved and only minimal differences can be seen. Restricting the input currents and measurements to 75\% of the boundary introduces clear instabilities in the CGO solutions. It is interesting to note that for $\varphi\in[0,3\pi/2]$ the solutions are rather stable and the instabilities occur mostly for the argument $\varphi>3\pi/2$.

\subsection{Reconstructions on the unit disk}
The first test case is a simple circular inclusion located close to the measurement boundary $\Gamma$, with radius $r=0.15$, conductivity value $2$, and smoothed at the inclusion boundary. We have computed reconstructions with decreasing percent of the boundary available. The measured voltages are extrapolated with the cubic spline approach as described in \ref{sec:extrapolation}. The partial ND matrices are formed from 32 basis functions and the reconstructions are computed by using the Born approximation approach for the scattering transform and then solving the D-bar equation to obtain the reconstructions in Figure \ref{fig:circleRecons}. The radius for computing the scattering transform and threshold values are presented in Table \ref{tab:scatVals}.

\begin{figure}[ht!]
\centering
\ {
\begin{picture}(400,200)

\put(150,-20){\includegraphics[width=300 pt]{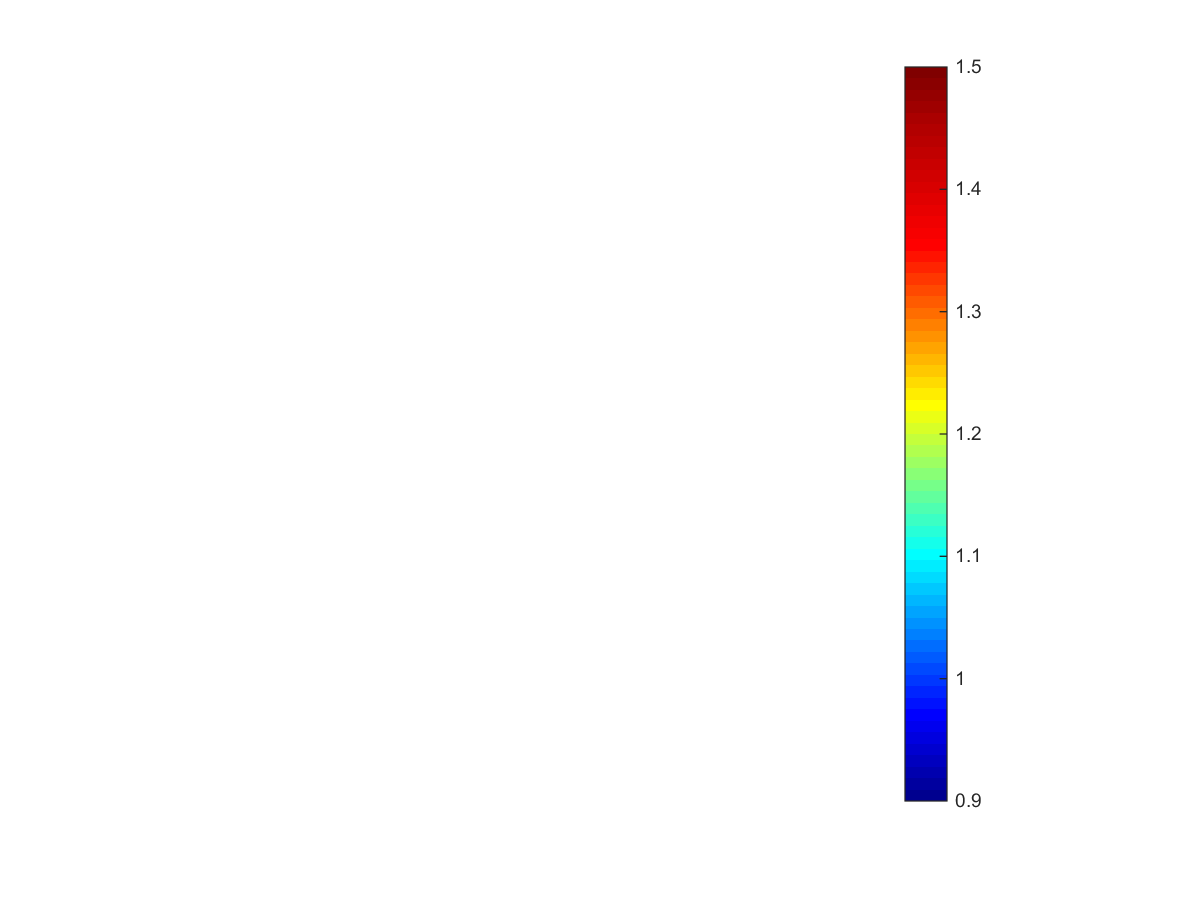}}

\put(250,100){\includegraphics[width=125 pt]{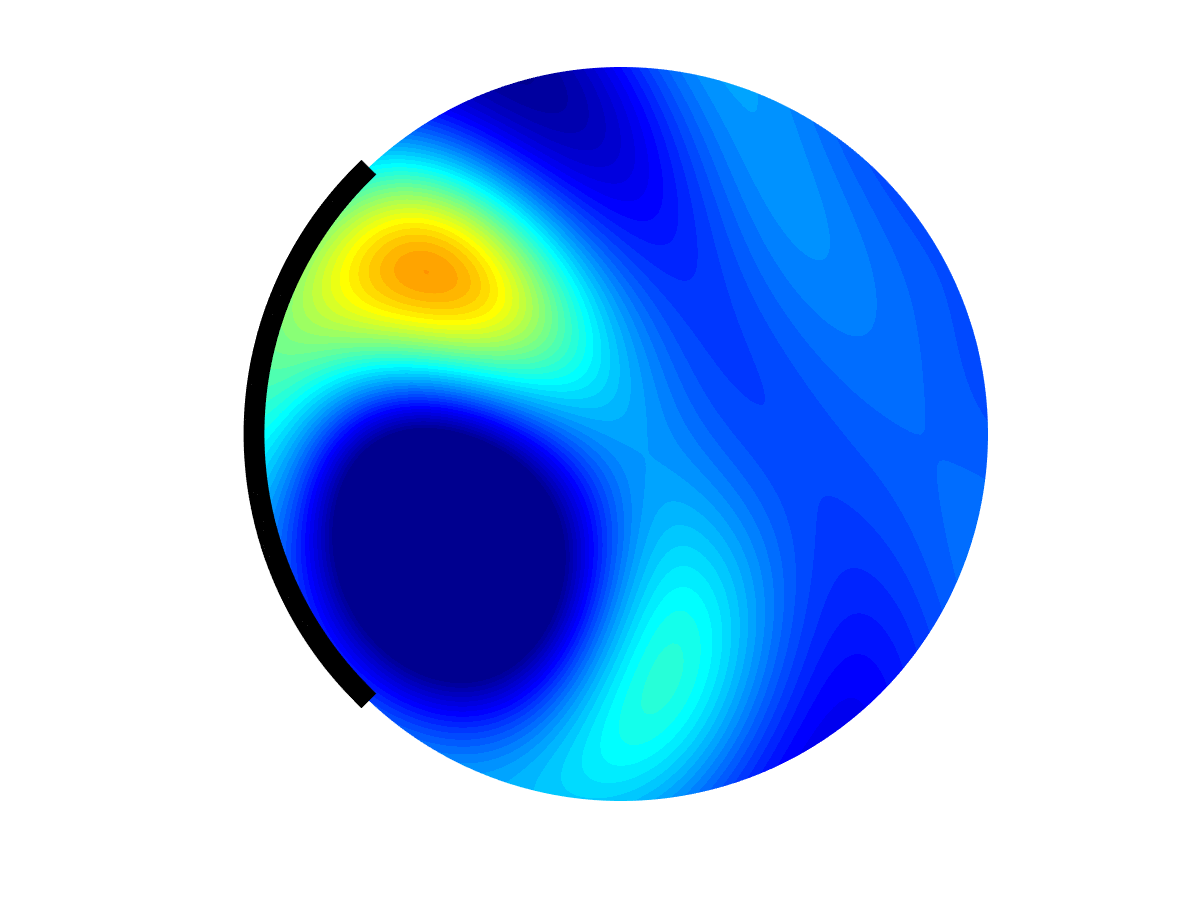}}
\put(250,0){\includegraphics[width=125 pt]{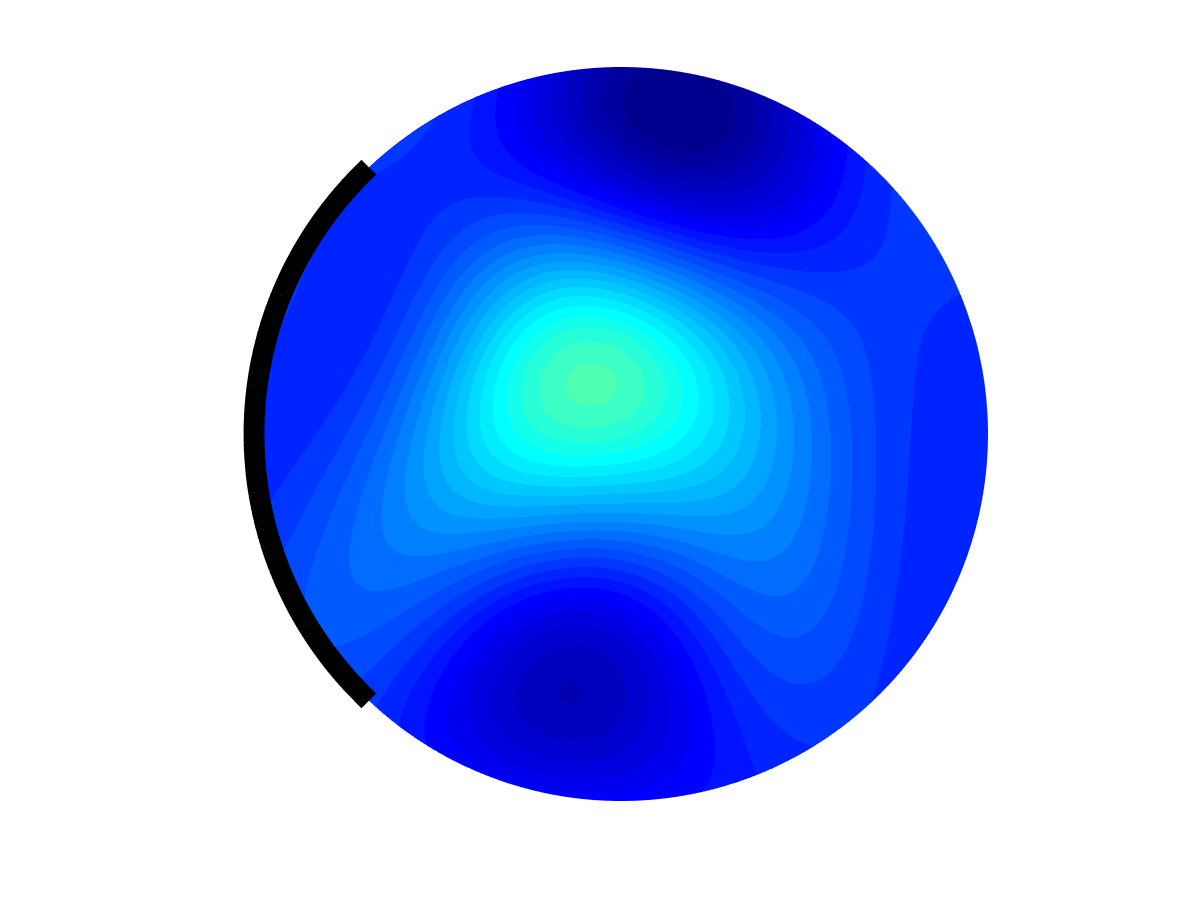}}

\put(145,100){\includegraphics[width=125 pt]{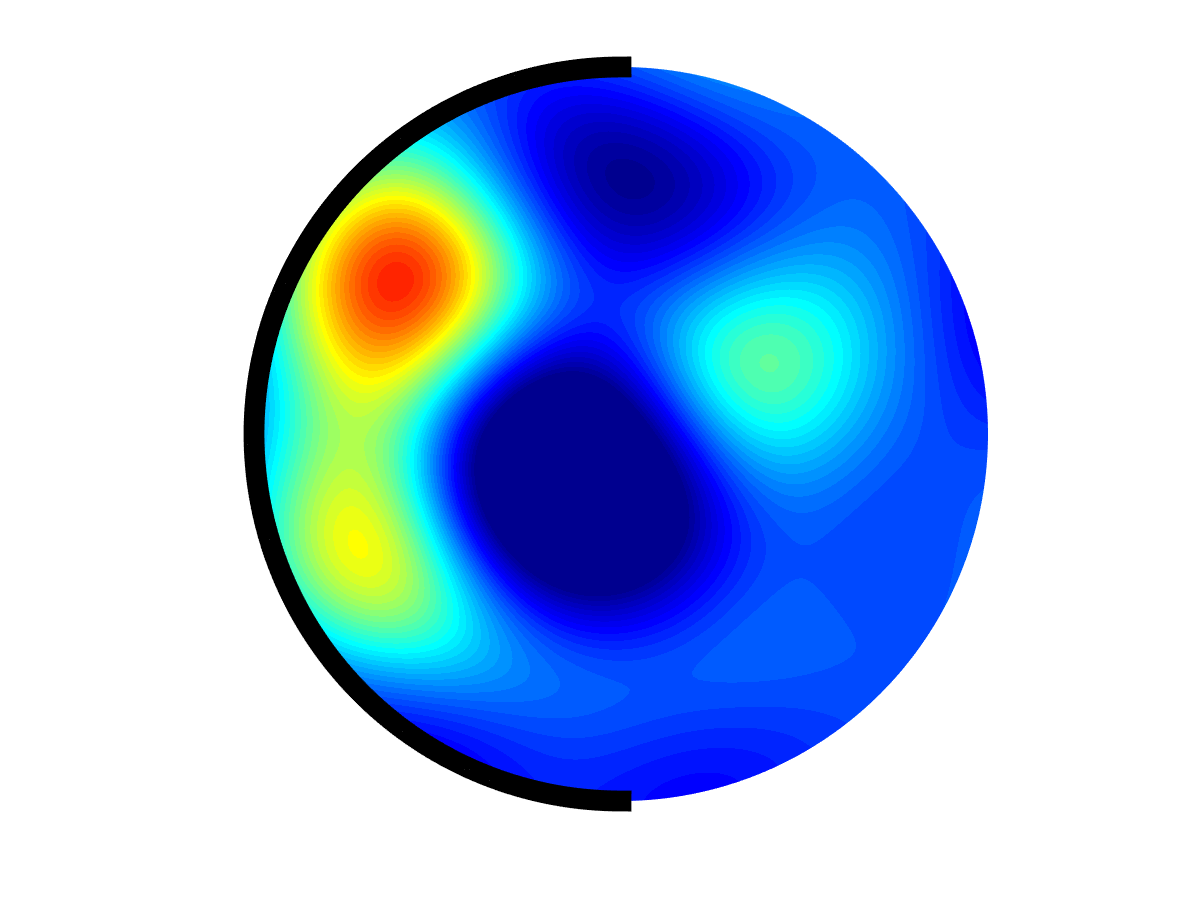}}
\put(145,0){\includegraphics[width=125 pt]{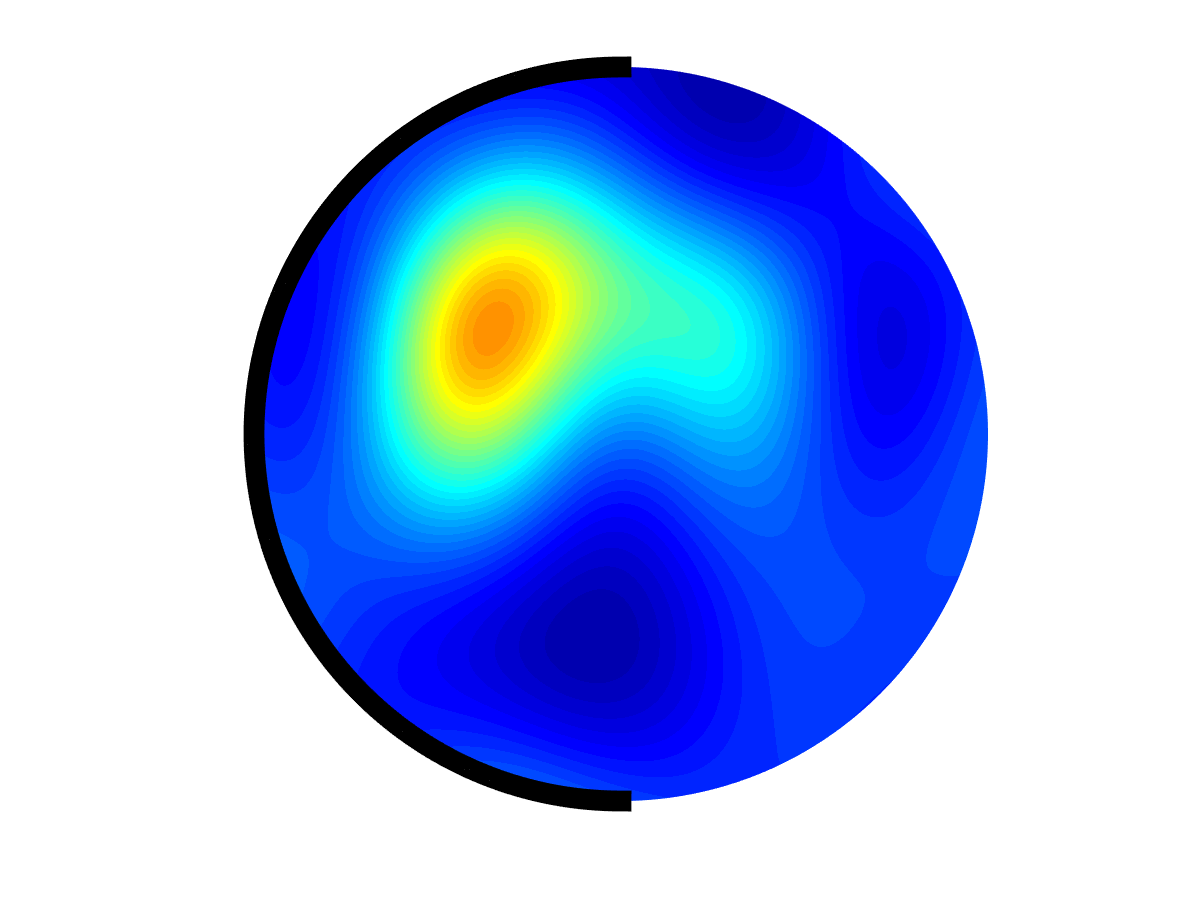}}

\put(40,100){\includegraphics[width=125 pt]{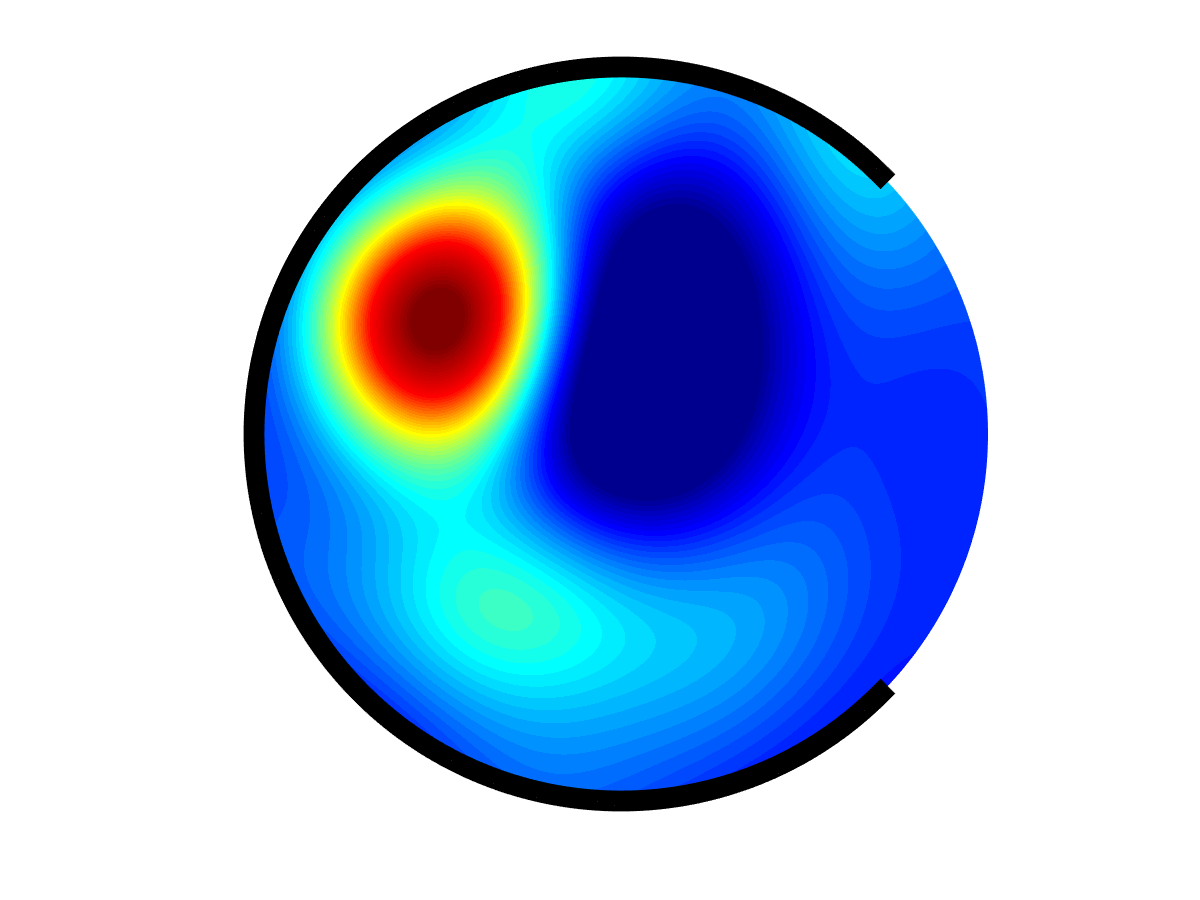}}
\put(40,0){\includegraphics[width=125 pt]{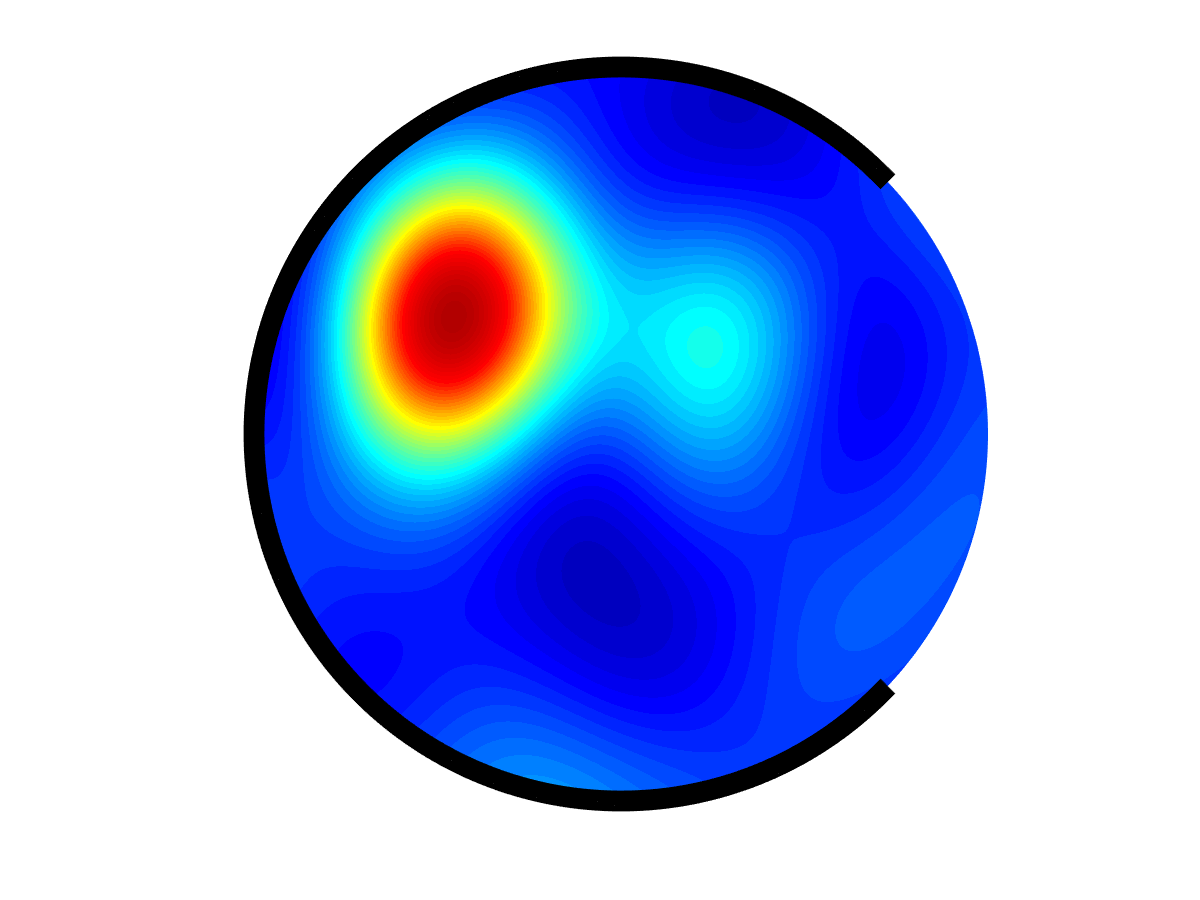}}

\put(-75,100){\includegraphics[width=125 pt]{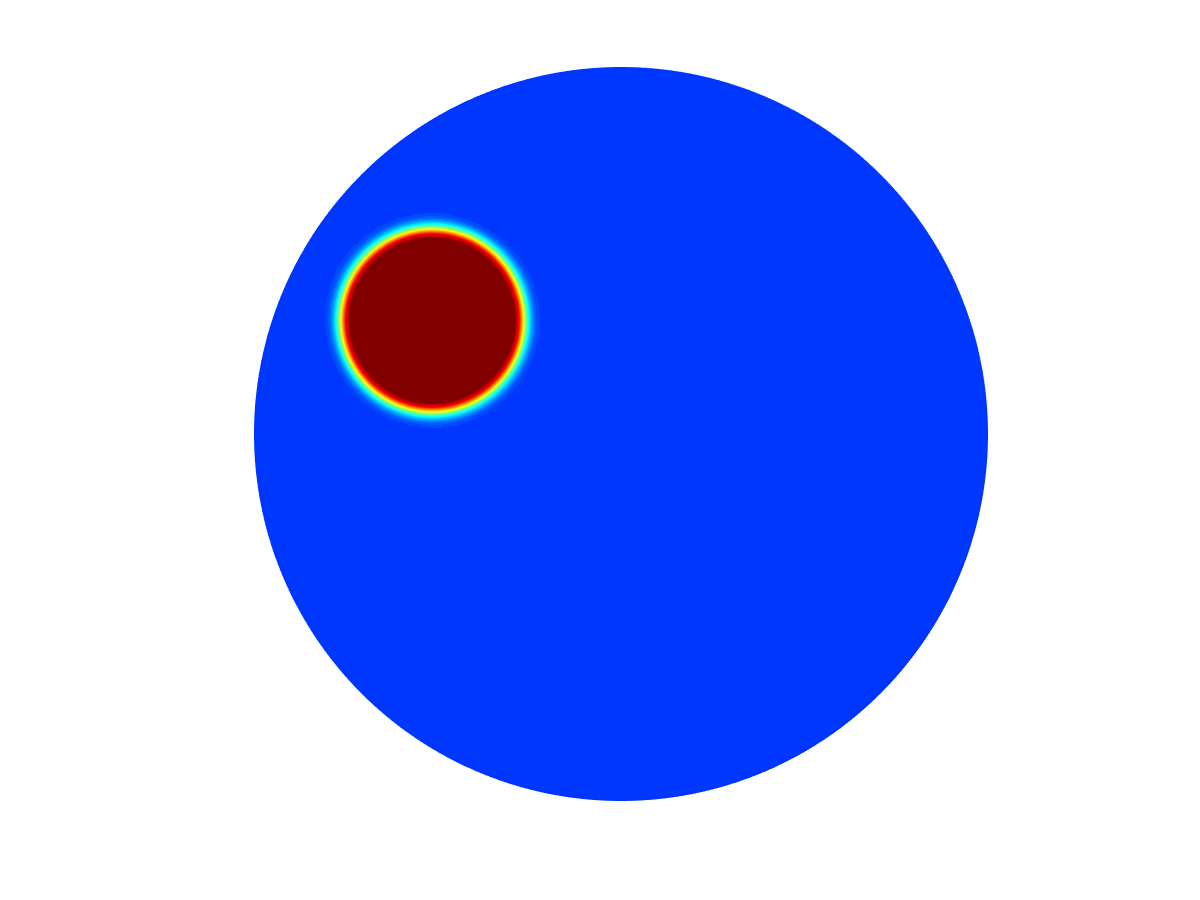}}
\put(-75,0){\includegraphics[width=125 pt]{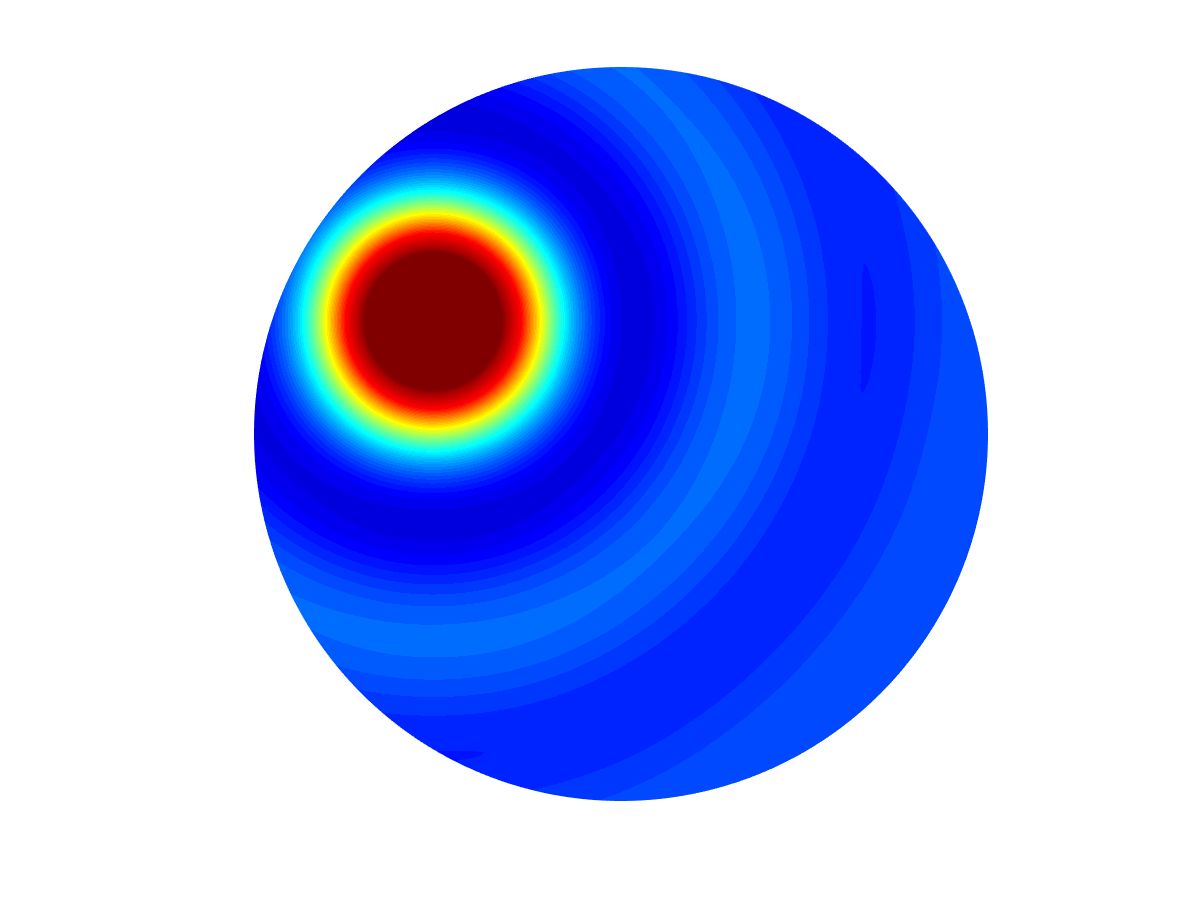}}

\put(-35,195){Phantom}
\put(-43,95){full-boundary}
\put(45,133){\rotatebox{90}{cut-off}}
\put(45,30){\rotatebox{90}{scaling}}
\put(95,195){75\%}
\put(200,195){50\%}
\put(305,195){25\%}

\end{picture} }
\caption{\label{fig:circleRecons}Reconstructions of a phantom with circular inclusion (conductivity value 2) with decreasing percentage of the boundary available. The measurement domain is centred on the left as indicated for each reconstruction by the black line. The first row shows reconstructions for the cut-off basis functions and the lower row for scaling basis functions.}
\end{figure}

The second test case is the medical motivated Heart-and-Lungs phantom on the unit disk. The conductivity is piecewise constant and hence represents more realistic measurements. The conductivity values are 0.5 for the lungs filled with air and 2 for the heart due to the blood. The measurement boundary $\Gamma$ is again centred on the left. Reconstructions can be seen in Figure \ref{fig:HnL_unitCirc} and the corresponding parameter choices for the scattering transform in Table \ref{tab:scatVals}.

\begin{figure}[ht!]
\centering
\ {
\begin{picture}(400,200)

\put(130,-20){\includegraphics[width=300 pt]{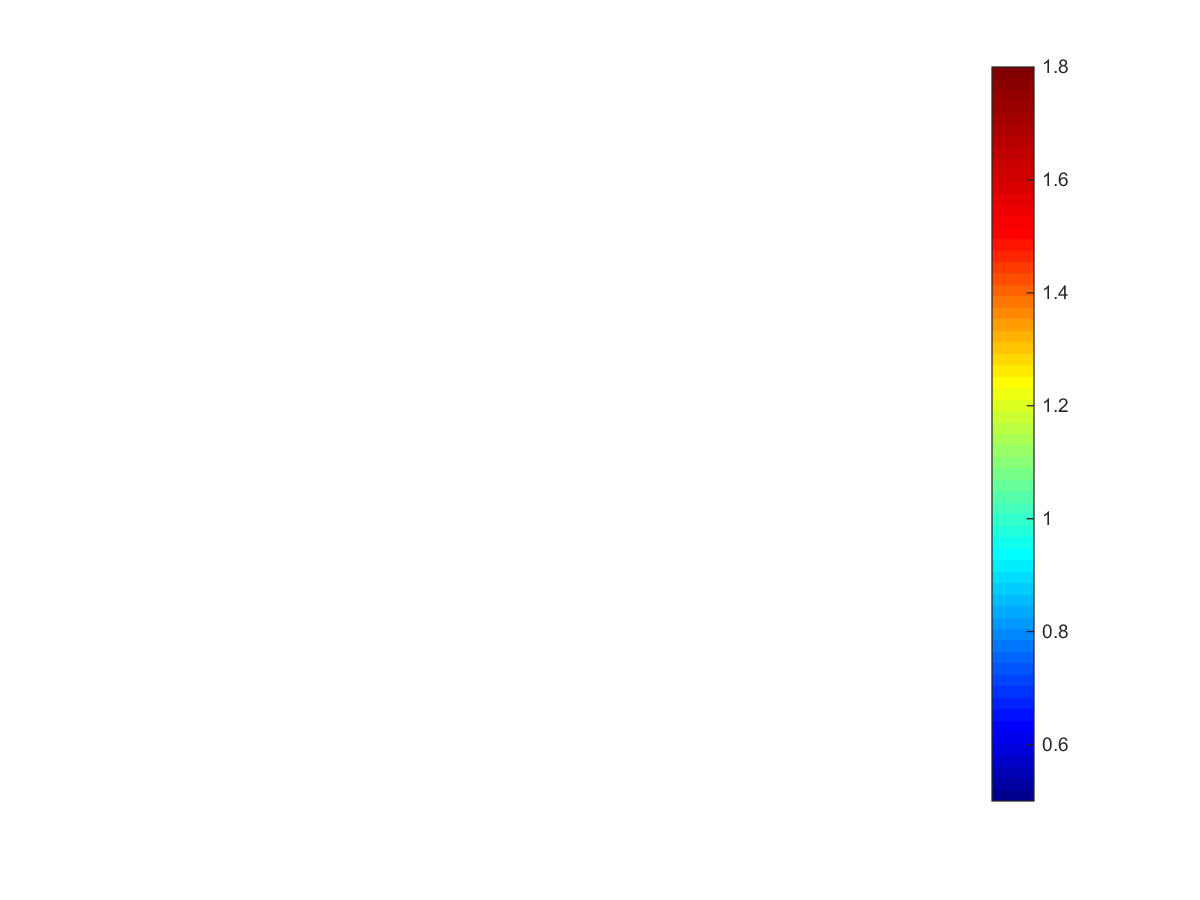}}

\put(250,100){\includegraphics[width=125 pt]{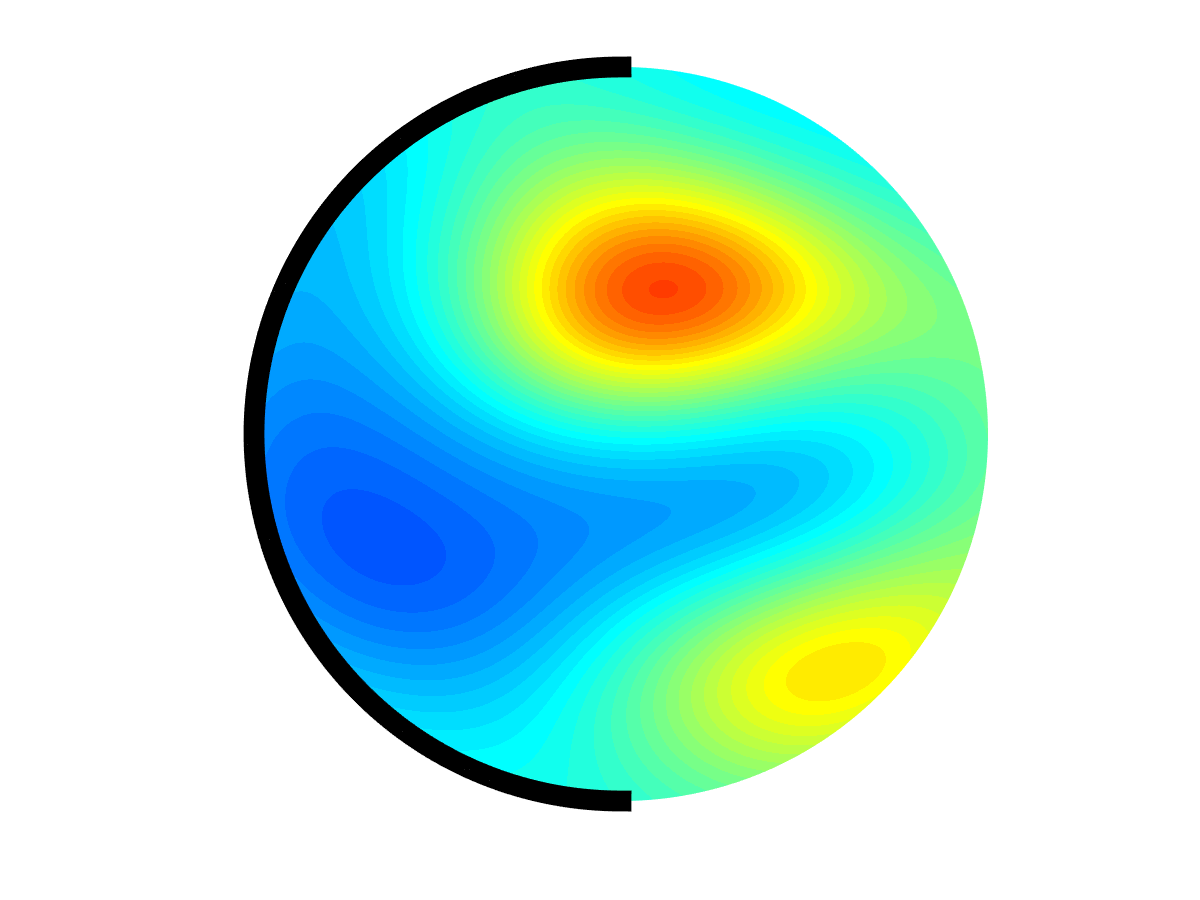}}
\put(250,0){\includegraphics[width=125 pt]{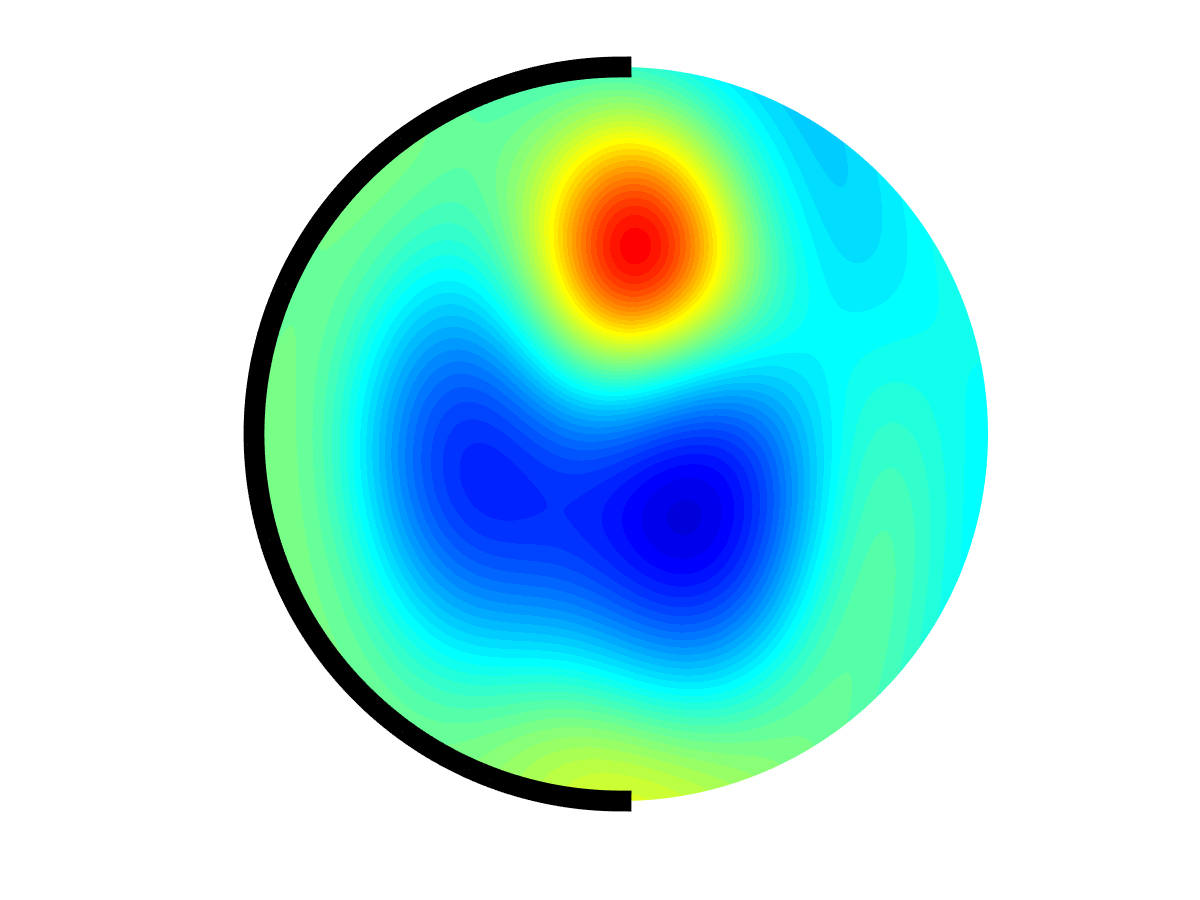}}

\put(145,100){\includegraphics[width=125 pt]{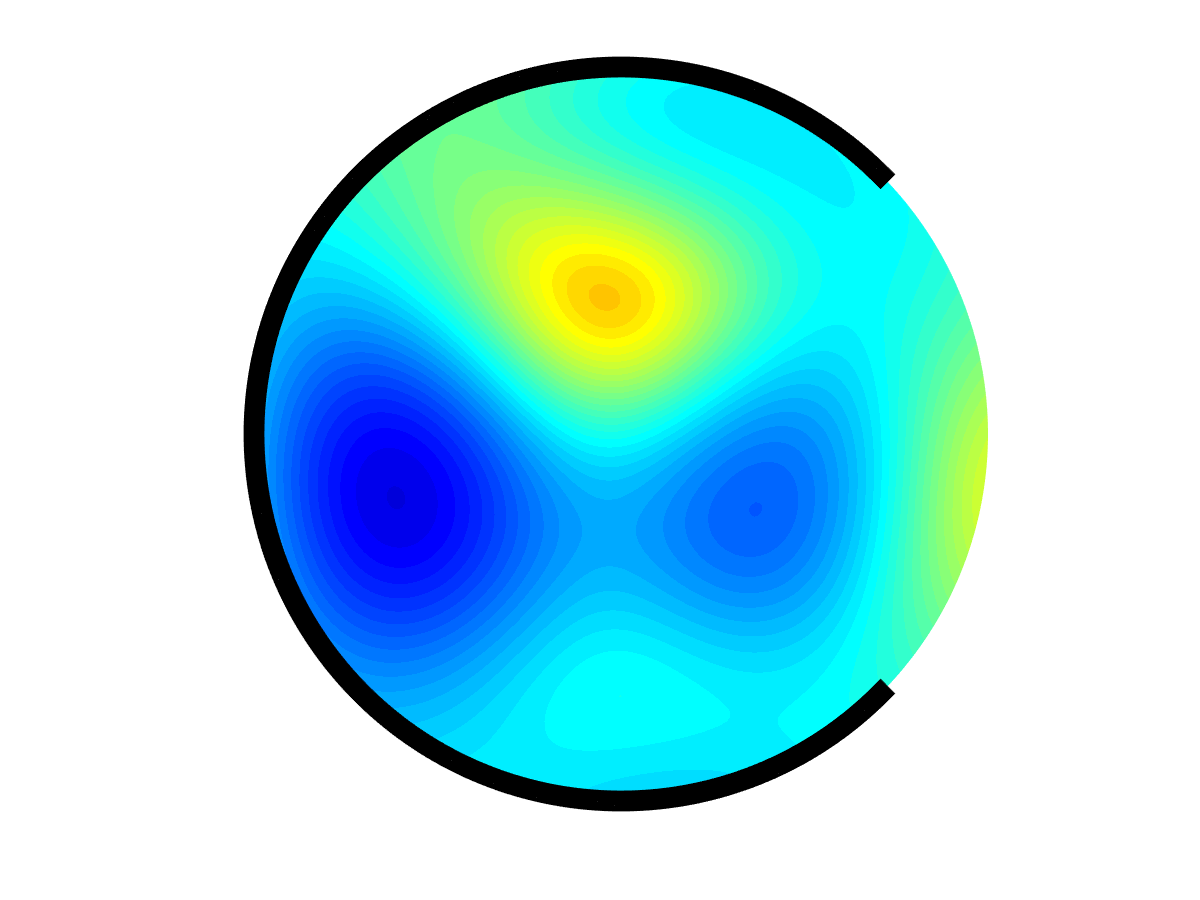}}
\put(145,0){\includegraphics[width=125 pt]{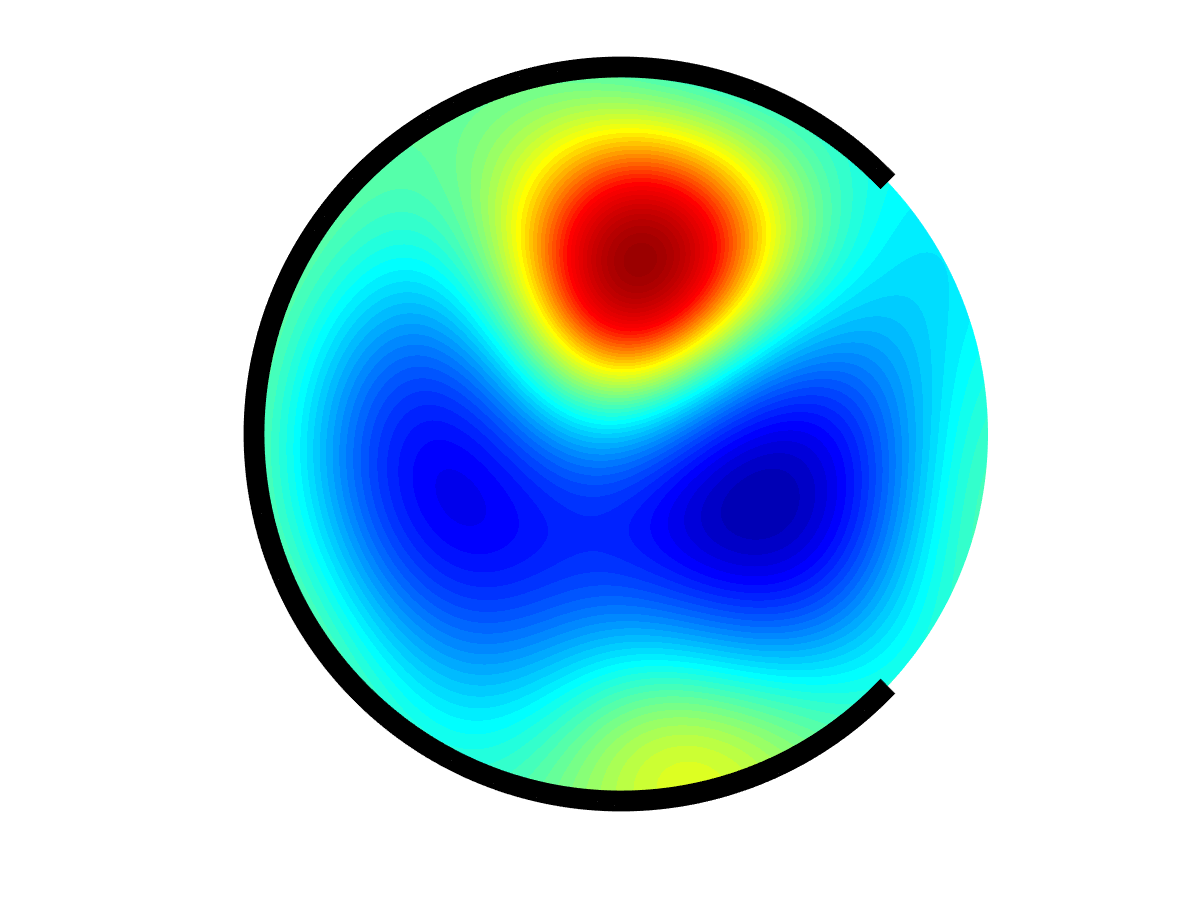}}

\put(40,100){\includegraphics[width=125 pt]{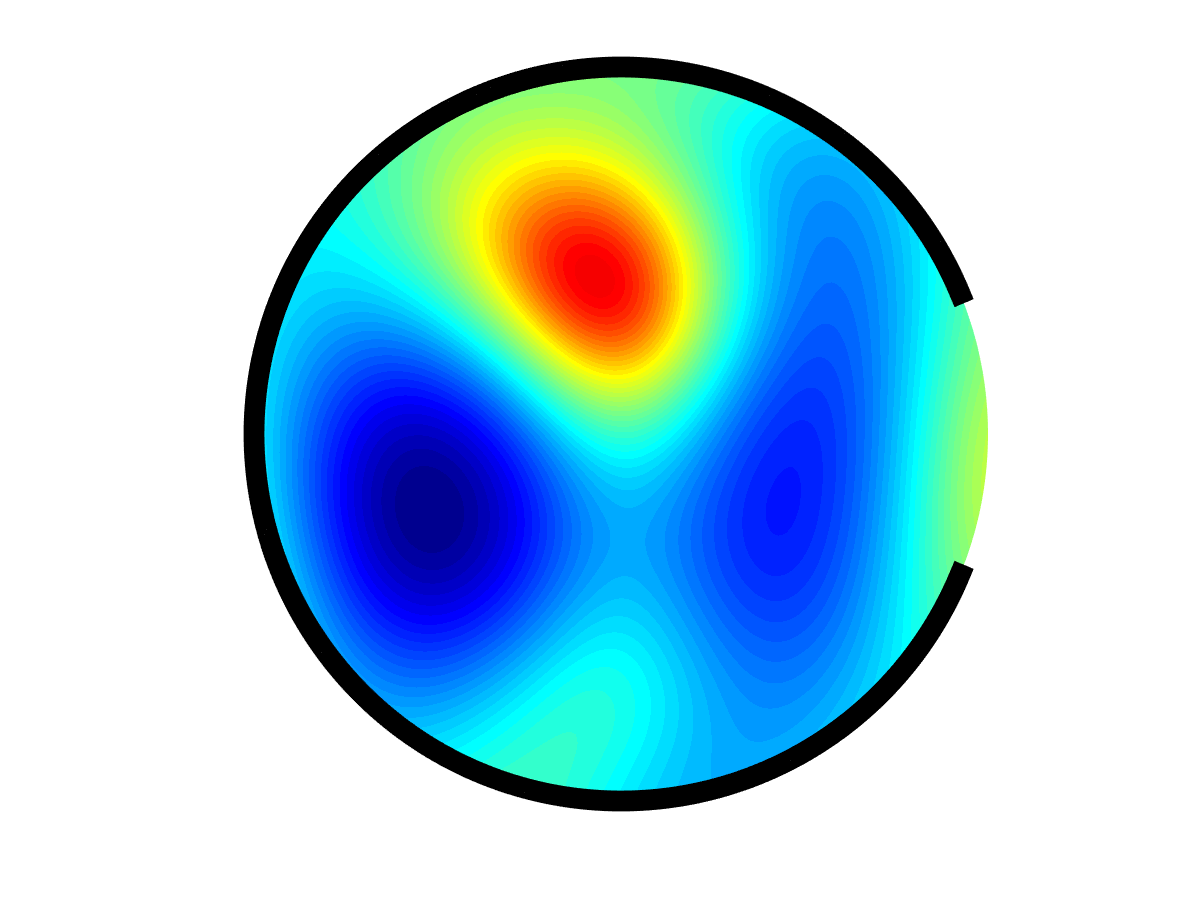}}
\put(40,0){\includegraphics[width=125 pt]{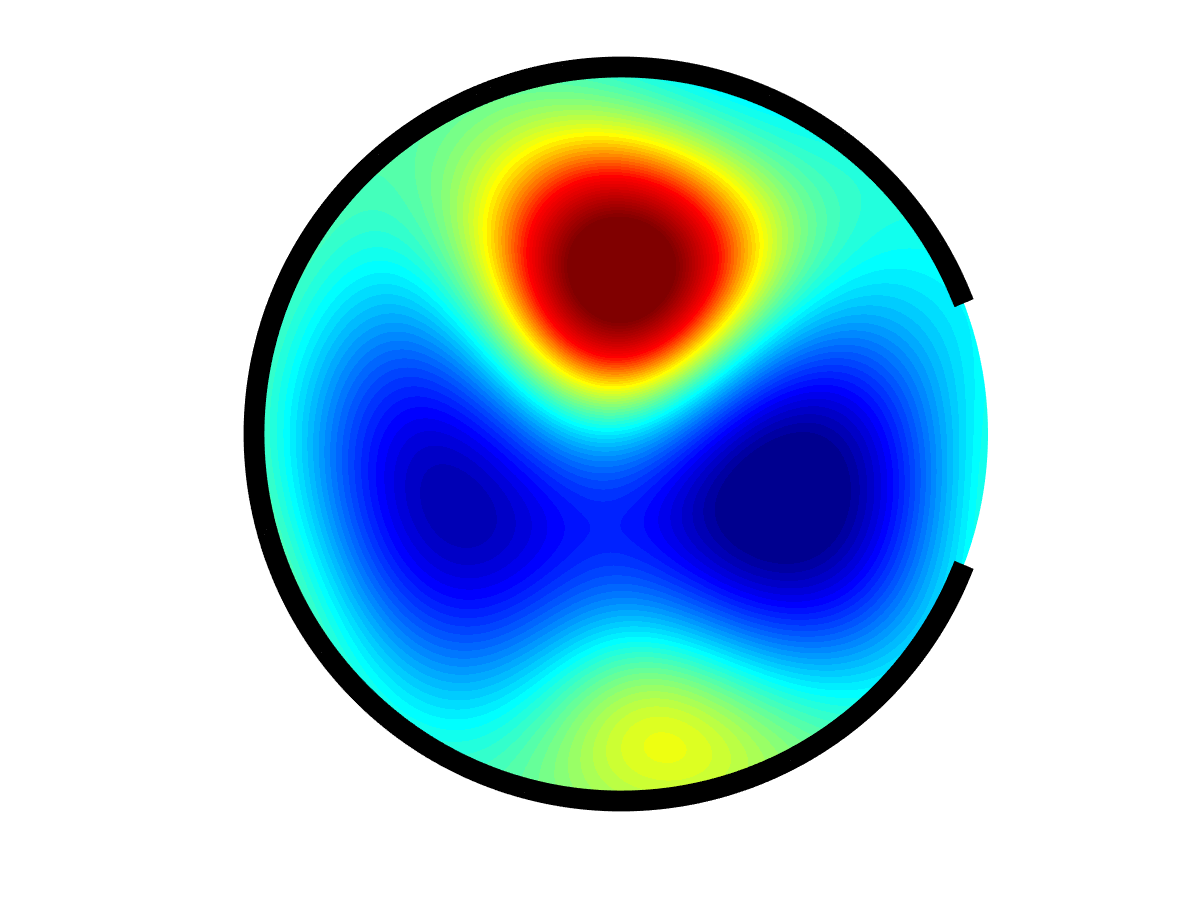}}

\put(-75,100){\includegraphics[width=125 pt]{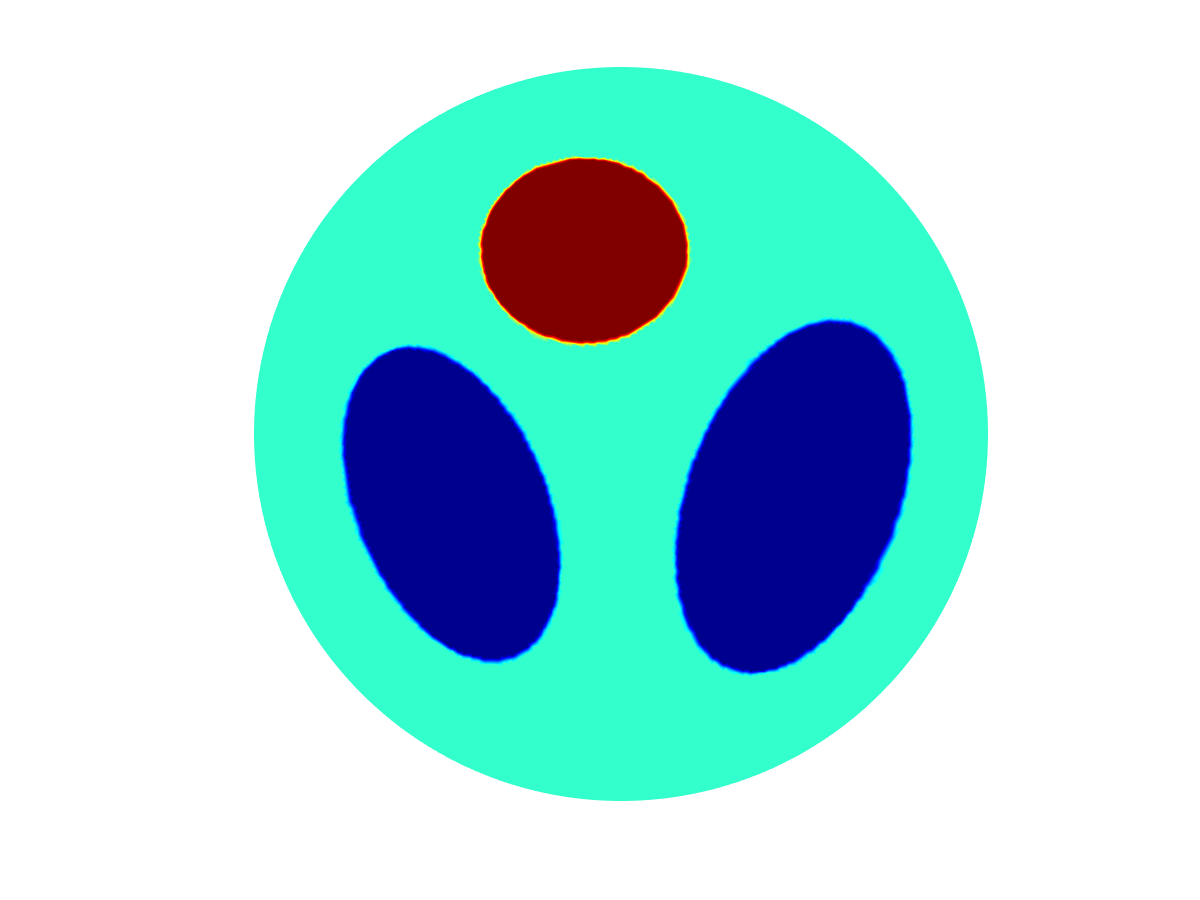}}
\put(-75,0){\includegraphics[width=125 pt]{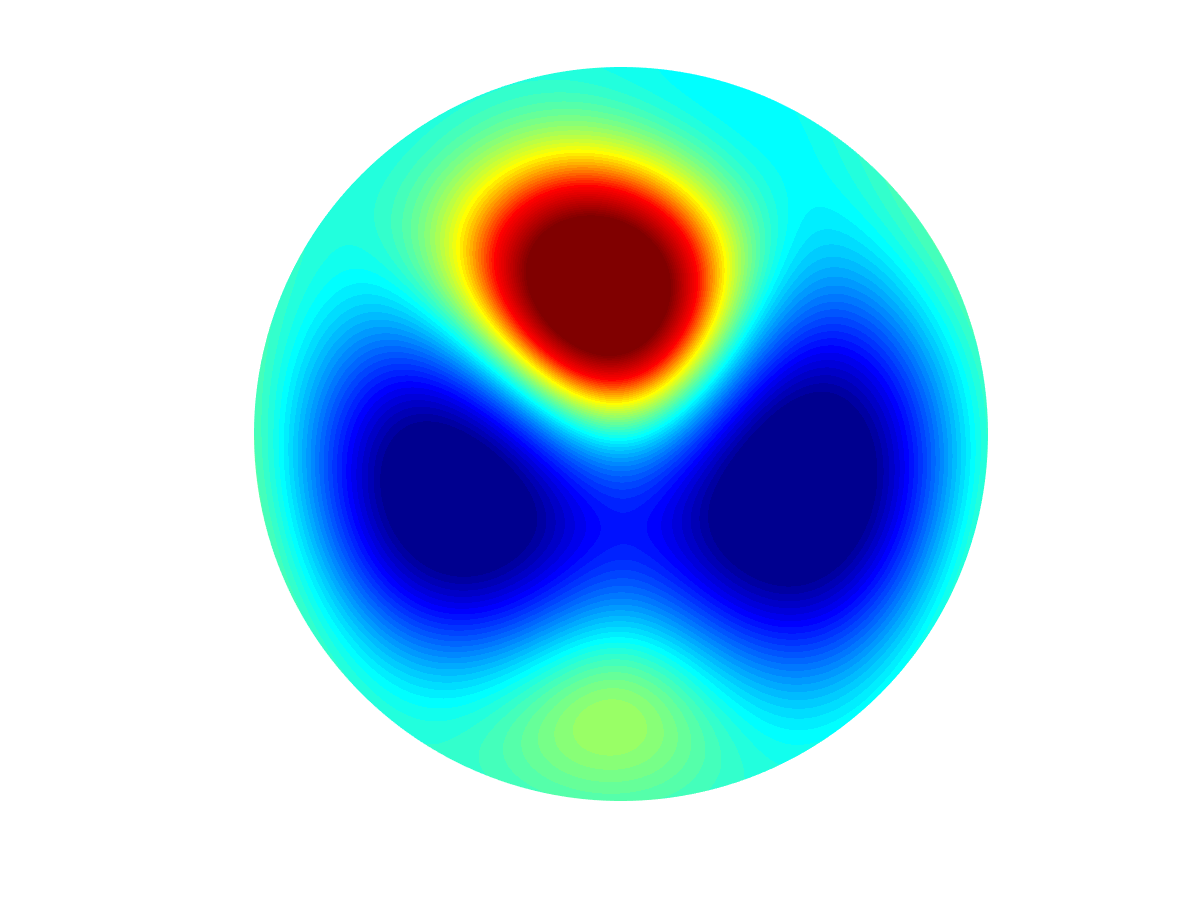}}

\put(-35,195){Phantom}
\put(-43,95){full-boundary}
\put(45,133){\rotatebox{90}{cut-off}}
\put(45,30){\rotatebox{90}{scaling}}
\put(93,195){87.5\%}
\put(200,195){75\%}
\put(305,195){50\%}

\end{picture} }
\caption{\label{fig:HnL_unitCirc}Reconstructions of the Heart-and-Lungs phantom with decreasing percentage of the boundary available. The measurement domain is centred on the left as indicated by the black lines. The conductivity values are 0.5 for the lungs and 2 for the heart.}
\end{figure} 

\begin{table}[h!]
  \centering

  \begin{tabular}{r|c|c|c|c|c}
     & 87.5\% & 75\%& 50\% & 25\%  \\ \hline
Circle: cut-off & - & $R=5$ & $R=4.5$ & $R=4.5$   \\ 
				& - & $c=5$ & $c=6$ & $c=8$   \\				\hline 	
	    scaling & - & $R=5$ & $R=5$ & $R=4$ \\
	    		& - & $c=4$ & $c=4$ & $c=4$  \\ \hline \hline
	    
   HnL: cut-off &$R=4$ & $R=4$ & $R=4$ & - \\ 
				&$c=10$ & $c=8$ & $c=8$ & -  \\ \hline
		scaling &$R=4$ & $R=4$ & $R=5$ & - \\ 
				&$c=10$ & $c=10$ & $c=10$ & -  \\
  \end{tabular}

    \caption{    \label{tab:scatVals}Values chosen for computing $\widetilde{\T}^{\mathrm{ND}}_{R,c}(k)$ for each case presented. The values are chosen such that features in the image are close to the original and contrast is maximized.}

\end{table}

At last we verify that Theorem \ref{theo:ReconError} holds numerically. As stated in the theorem, we compute the reconstruction error for a fixed cut-off radius for all reconstructions. The resulting errors are plotted in Figure \ref{fig:conv_recon} for both of the phantoms in the unit disk. Theorem \ref{theo:ReconError} only covers the cut-off case, since we do not have an operator norm for the scaling basis. As mentioned before, in case we only have a finite matrix approximation of the ND map available the result extends to the scaling case as well. The predicted linear convergence rate is achieved for both basis functions with extrapolated measurements. It is interesting to note that the graphs have similarities in their shape to the error in ND matrices. This is due to the continuity of solutions of the D-bar equation, that means the behaviour of the data error directly translates to the reconstruction error.

\begin{figure}[ht!]
\centering
\begin{picture}(350,180)
\put(182,-10){\includegraphics[width=250 pt]{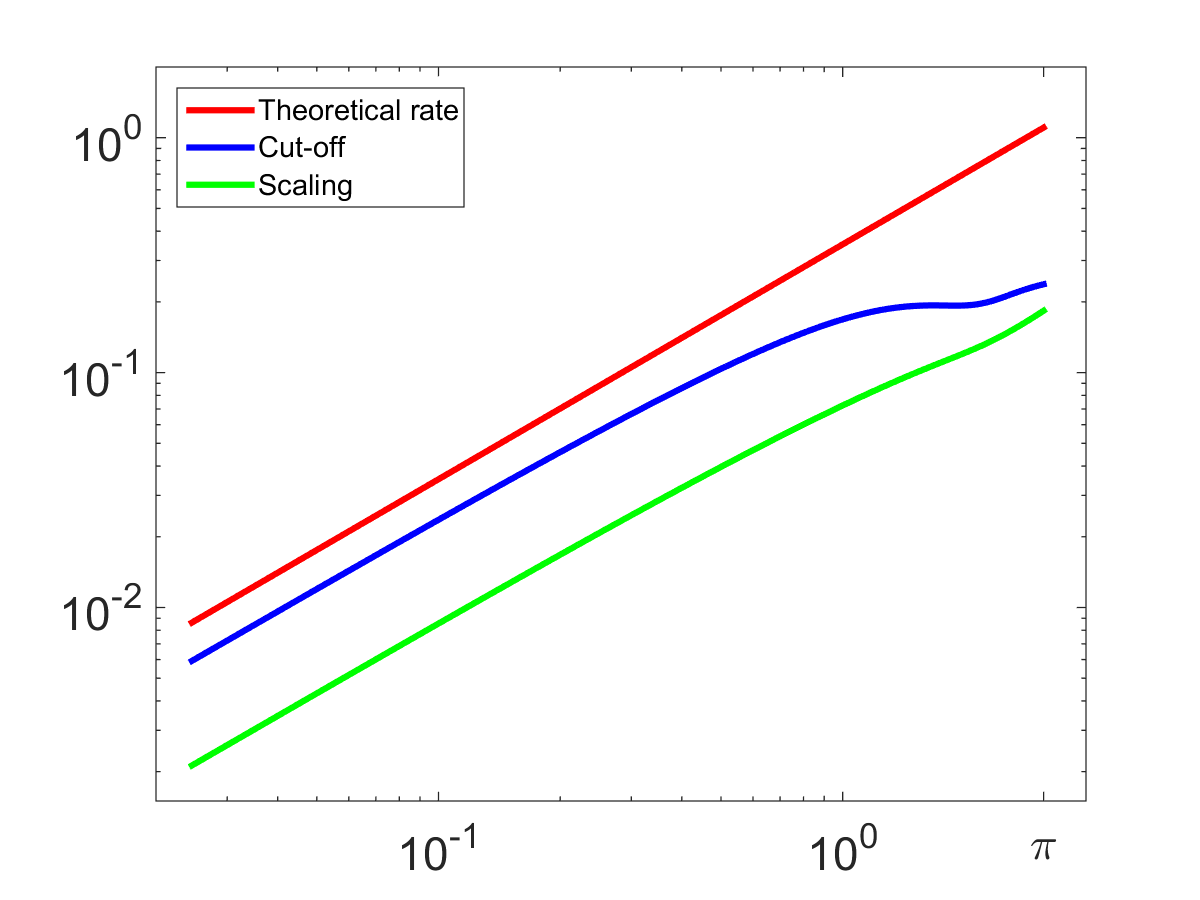}}
\put(-55,-10){\includegraphics[width=250 pt]{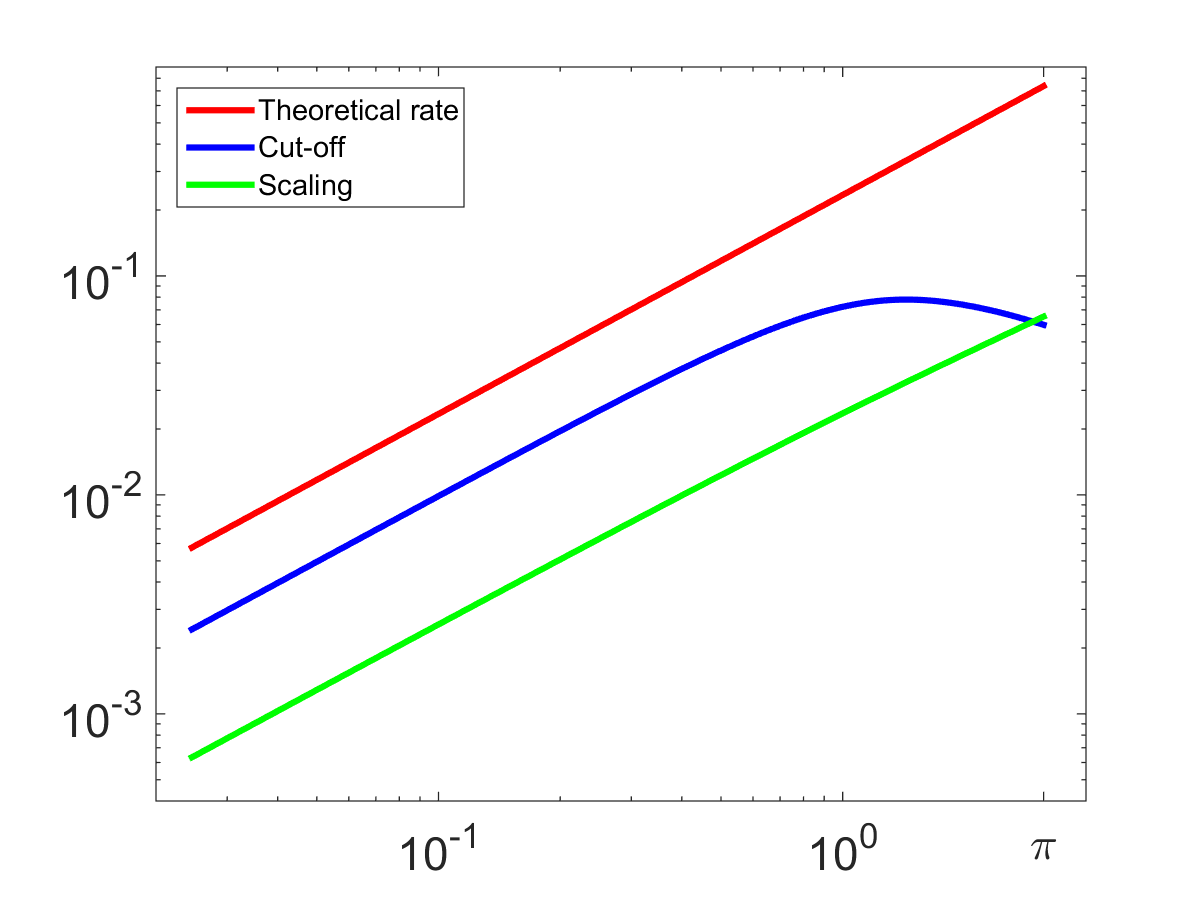}}
\put(75,-5){\large h}
\put(310,-5){\large h}
\put(-55,45){\rotatebox{90}{\footnotesize $\|\widetilde{\sigma}_{R}-\sigma_{R}\|_2/\|\sigma_{R}\|_2$ }}
\put(180,45){\rotatebox{90}{\footnotesize $\|\widetilde{\sigma}_{R}-\sigma_{R}\|_2/\|\sigma_{R}\|_2$ }}
\put(35,175){Circular inclusion}
\put(270,175){Heart-and-Lungs}
\end{picture}
\caption{\label{fig:conv_recon}Relative error of reconstructions from extrapolated measurements with fixed cut-off radius $R=3$ and no threshold. On the left for the circular inclusion and on the right for the Heart-and-Lungs phantom.}
\end{figure} 

\begin{figure}[h!]
\centering
\begin{picture}(250,120)
\put(-25,-15){\includegraphics[width=150 pt]{images/recons/HnL_phantom_clean.png}}
\put(100,-15){\includegraphics[width=150 pt]{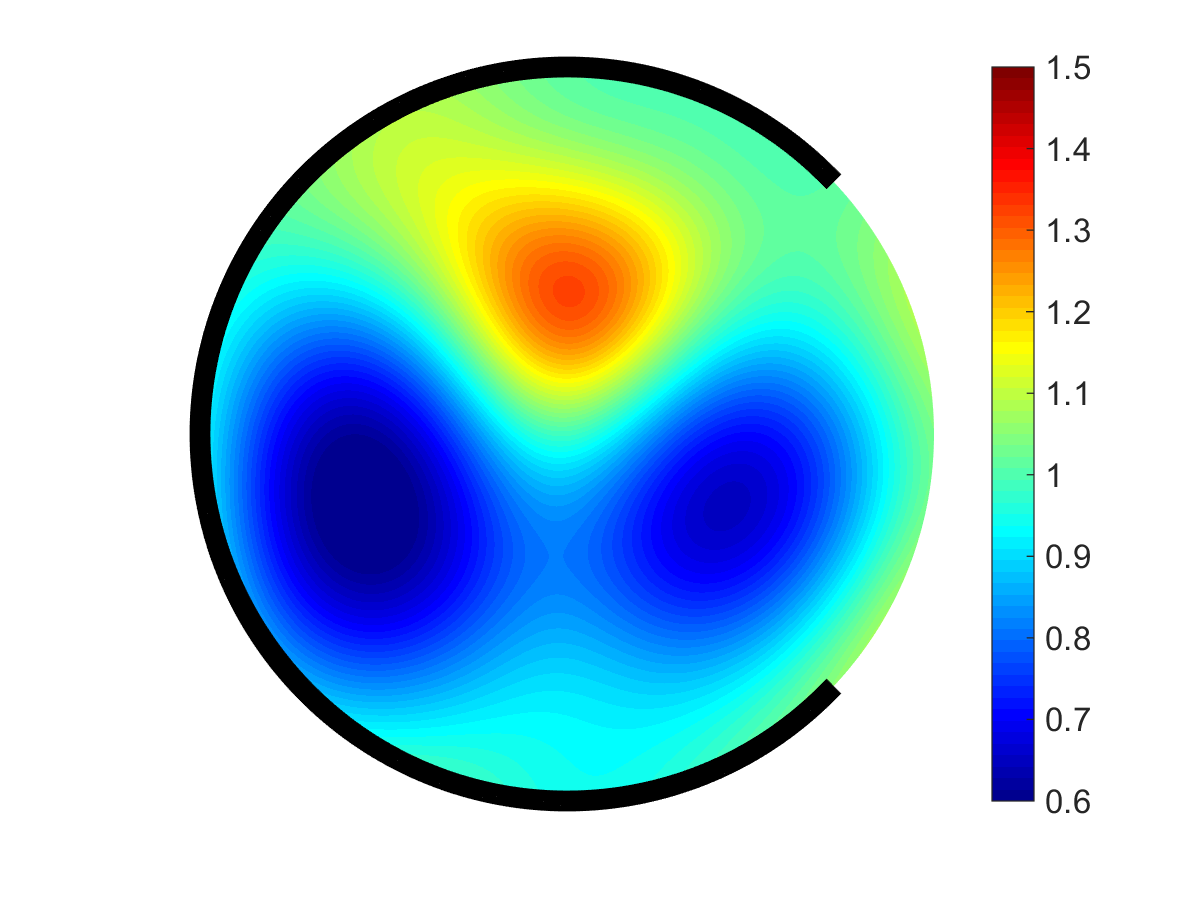}}
\put(25,100){\Large Phantom}
\put(125,108){\Large Combined data}
\put(128,95){\Large reconstruction}
\end{picture}
\caption{\label{fig:combinedData} Reconstruction of the Heart-and-Lungs phantom by combining the data acquired on 75\% of the boundary with the scaling and cut-off basis. Some contrast is lost, but features are nicely preserved and separated.}
\end{figure}

The reconstructions in Figure \ref{fig:circleRecons} and Figure \ref{fig:HnL_unitCirc} have very similar characteristics. In general the contrast of the reconstruction decreases with the measurement domain available. For the cut-off basis we see that the reconstruction is centred close to the measurement boundary, whereas for the scaling basis the reconstruction tends to move into the middle of the domain. This suggests a numerical experiment of combining the data as $\widetilde{\mathbf{R}}_{\sigma,1}=1/2(\widetilde{\mathbf{R}}^c_{\sigma,1}+\widetilde{\mathbf{R}}^s_{\sigma,1})$ and use this for computing the reconstruction, as illustrated in Figure \ref{fig:combinedData}.

\subsection{Reconstructions for a human chest phantom}
We consider a Heart-and-Lungs phantom defined on a chest-shaped mesh (see Figure \ref{fig:HnL_FEM_recons}). 
The conductivity values in this case are 0.5 for the lungs filled with air and 3 for the heart. 

\begin{figure}[h!]
\centering
\begin{picture}(300,350)
\put(38,-10){\includegraphics[width=350 pt]{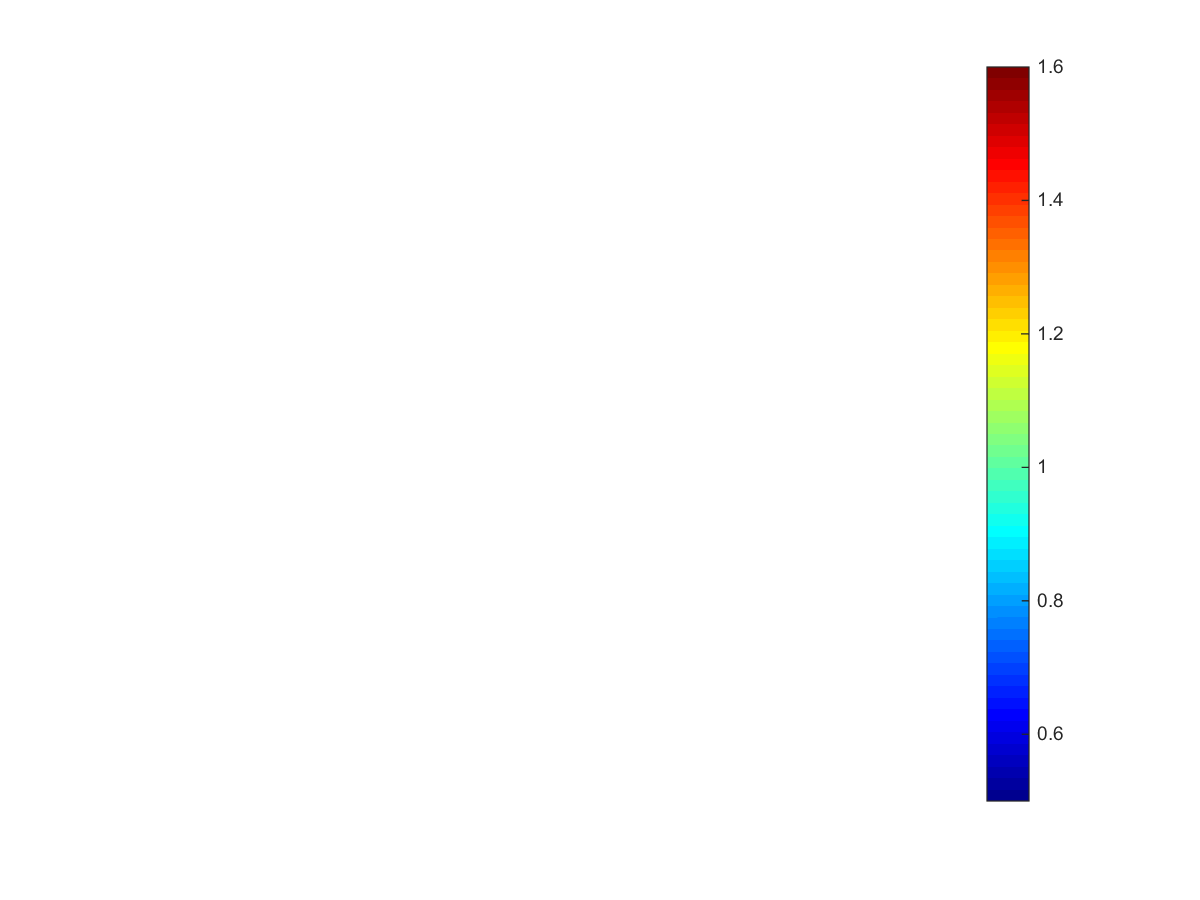}}
\put(-10,220){\includegraphics[width=175 pt]{images/recons/HnL_FEM_phantom.png}}
\put(150,220){\includegraphics[width=175 pt]{images/recons/HnL_FEM_full.png}}
\put(-10,110){\includegraphics[width=175 pt]{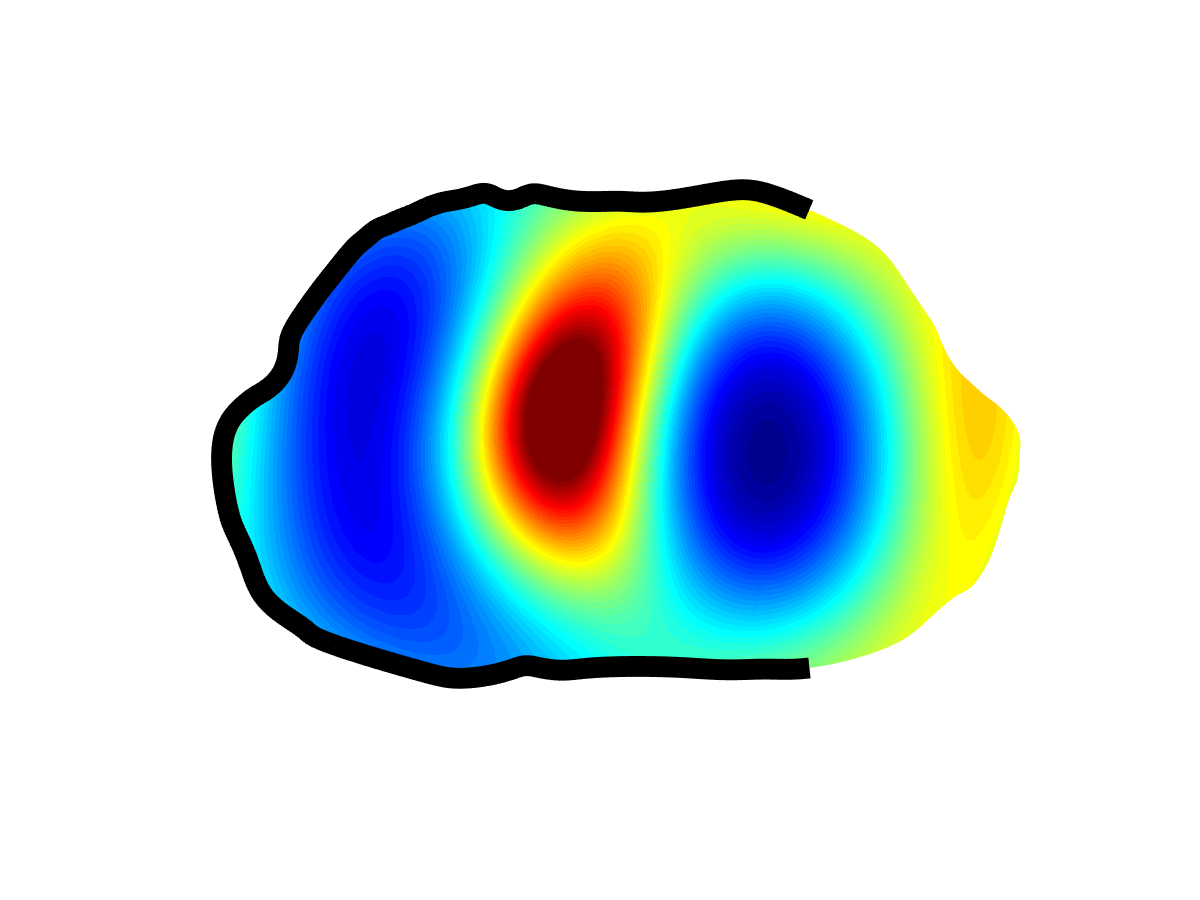}}
\put(150,110){\includegraphics[width=175 pt]{images/recons/HnL_FEM_cut_back_75_gamma.png}}
\put(-10,0){\includegraphics[width=175 pt]{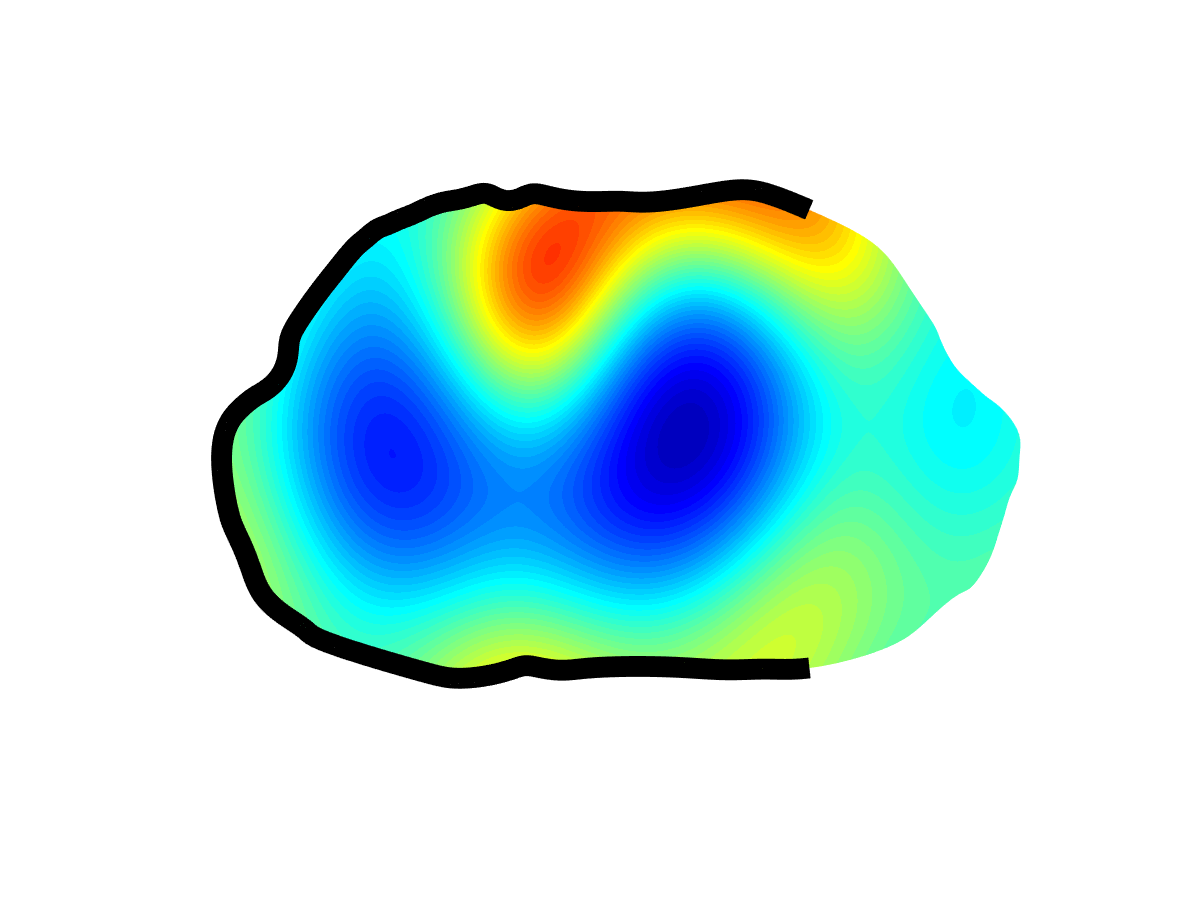}}
\put(150,0){\includegraphics[width=175 pt]{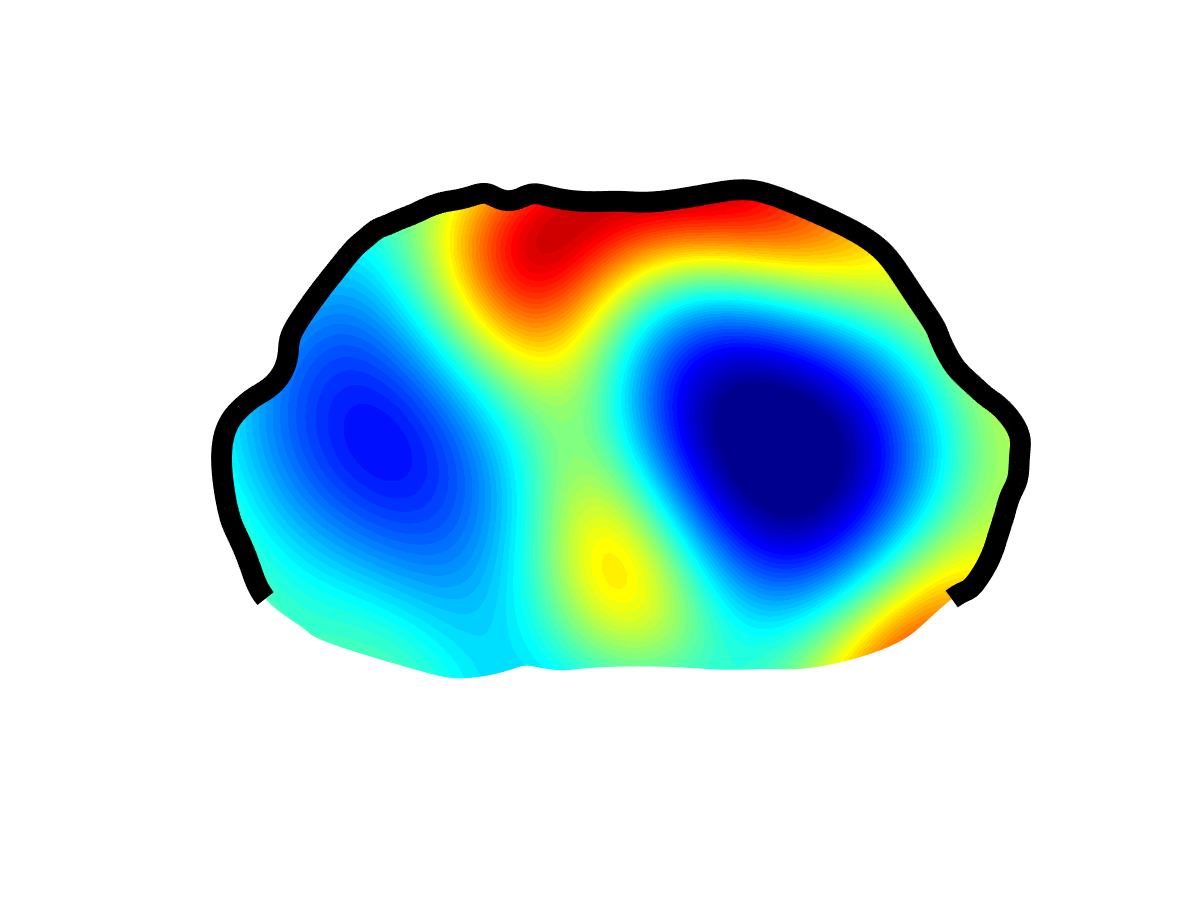}}
\put(45,220){\Large{Lateral 66\%}}
\put(205,220){\Large{Ventral 75\%}}
\put(50,330){\Large{Phantom}}
\put(198,330){\Large{Full-boundary}}
\put(-10,160){\rotatebox{90}{\Large{Cut-off}}}
\put(-10,45){\rotatebox{90}{\Large{Scaling}}}

\end{picture}
\caption{\label{fig:HnL_FEM_recons} Reconstructions of a human chest phantom with two measurement positions. (Left) A sideways lying patient with 66\% of the boundary available for measurements and (Right) a patient lying on the back with 75\% of the boundary available. The measurement domain is indicated by the black line. Results for both choices of the partial-boundary map are illustrated.}
\end{figure}

For the simulation of the ND map we first need to define an orthonormal basis on the boundary: 
Let $r : [0,2\pi] \to [0,2\pi], s\mapsto r(s)$ be the arclength parametrization of $\partial \Omega$ (where $\Omega$ is our chest region), i.e. if $ \gamma (s) = e^{i r(s)}$, then $|\gamma'(s)| =1$. The functions $\varphi_n = (2\pi)^{-1/2} e^{i n r(s)}$ then form an orthonormal basis of $L^2(\partial \Omega)$ and the ND map can be simulated as before. Thus, we compute the partial ND matrix as 
\[
(\widetilde{\mathbf{R}}_\sigma)_{n,\ell}=(\widetilde{\ND}_\sigma \varphi_n,\varphi_\ell)=(\ND_\sigma\parphi_n,\varphi_\ell)=\frac{1}{\sqrt{2\pi}}\int_{\bndry} u_n|_{\bndry}(r(s))e^{-i\ell r(s)}ds.
\]
This also applies when computing the scattering transform over $\partial\Omega$. The D-bar equation is then solved for each point in the chest mesh.

The reconstructions are all displayed in Figure \ref{fig:HnL_FEM_recons}. For the measurement domain we have chosen two positions of the patient, lying sideways and on the back. The position of the measurement domain was chosen such that it reasonably represents the accessible part of the patient. In all four cases the reconstruction quality seems to be sufficient for detecting the collapse of a lung, for example. The heart is pushed to the boundary in some reconstructions and seems to disappear. This could be overcome by reconstructing on a slightly larger mesh to catch the disappearing features.

\section{Conclusions}\label{sec:conclusion}
The inverse problem of electrical impedance tomography with the realistic assumption that only part of the boundary is accessible for measurements has been studied. In this setting it is important to consider the Neumann problem rather than the Dirichlet problem, since partially supported Dirichlet boundary conditions are nonphysical in many applications.
In this context we have introduced a partial Neumann-to-Dirichlet map: represented as a composition of the ND map and a partial-boundary map. The choice of the partial-boundary map is crucial for the error analysis and the reconstruction quality. For an electrode model involving contact impedances one could choose it as the nonorthogonal projection introduced in \cite{Hyvoenen2009}. This suggests that our approach can be extended to real measurement data.

Our main result, Proposition \ref{prop:convergence_single}, shows that the error between the partial ND map and the full ND map depends linearly on the length of the missing boundary. This result assumes knowledge of the measurement on the whole boundary. If one restricts the measurement domain to the same area as the current input, one has to extend the data to the full boundary. This is done via an extrapolation procedure on the difference data, such that the linear error estimate is preserved. Thus, the error in the data is not governed by restriction of the measurement domain, but by restricting the input current. 

Computation of the CGO solutions and the scattering transform is done directly from the ND map by utilizing a Born approximation. The approximate reconstructions are then computed from the truncated scattering transform. Further, if the choice of partial-boundary map permits to establish an operator estimate for the partial ND map, we can prove that the reconstruction error from the partial ND map (compared to full ND map) has linear dependence as well.

In the computational section we have verified that the error estimates hold asymptotically when the missing boundary is small. For finite data the reconstruction error holds also for partial-boundary maps that do not provide an operator estimate. It is of particular interest to note that the features of the reconstructions depend heavily on the choice of partial-boundary map. This suggests that more research can be done to optimize the choice of boundary currents for the partial data problem.

There are several approaches one could imagine to improve direct reconstruction from the partial ND map. One possibility is to combine data from measurements with several choices of partial boundary map as illustrated in Figure \ref{fig:combinedData}. For the chest reconstructions in Figure \ref{fig:HnL_FEM_recons} one can see that some features seem to disappear from the image. This could be overcome by reconstructing on a slightly larger mesh and outlining the original domain in the reconstruction.

\section*{Acknowledgements}
This work was supported by the Academy of Finland through the Finnish
Centre of Excellence in Inverse Problems Research 2012–2017, decision number
250215. AH and MS were partially supported by FiDiPro project of the Academy of Finland,
decision number 263235.

\bibliographystyle{siam}
\bibliography{Inverse_problems_references_2016}

\end{document}